\documentclass[a4paper, 11pt]{article}

\usepackage{amsmath,amssymb,amsthm}
\usepackage[latin1]{inputenc}
\usepackage{version,tabularx,multicol}
\usepackage{graphicx,float,psfrag}
\usepackage{stmaryrd}
 \usepackage{enumerate}
 \usepackage{color}
\usepackage[pdftex]{hyperref}
\usepackage{authblk}

\usepackage[numbers,sort&compress]{natbib}

 \setlength{\topmargin}{-1.5cm}
 \setlength{\oddsidemargin}{0pt}
 \setlength{\evensidemargin}{0pt}
 \setlength{\textwidth}{16.4cm}
 \setlength{\textheight}{24cm}
 \setlength{\parindent}{12pt}
 \setlength{\parskip}{4pt}




\theoremstyle{plain}
\newtheorem{theorem}{Theorem}[section]

 \newtheorem{corollary}[theorem]{Corollary}

 \newtheorem{proposition}[theorem]{Proposition}
\newtheorem{lem}[theorem]{Lemma}
 \newtheorem{lemma}[theorem]{Lemma}

\newtheorem{remark}{Remark}[section]

\newcommand{\vt}{{\Vert}}

\newcommand{\ii}{{\mathrm i}}

\newcommand{\bn}[1]{\emph{\textbf{#1}}}
\newcommand{\bm}[1]{\mbox{\boldmath $#1$}}

  \makeatletter
  \@addtoreset{equation}{section}
  \makeatother

 \def\beqlb{\begin{eqnarray}}\def\eeqlb{\end{eqnarray}}
 \def\beqnn{\begin{eqnarray*}}\def\eeqnn{\end{eqnarray*}}

 \def\mbb{\mathbb}

\newcommand{\bcen}{\begin{center}}
\newcommand{\ecen}{\end{center}}
\newcommand{\bgeqn}{\begin{equation}}
\newcommand{\edeqn}{\end{equation}}

\def\M{{\mathcal  M}}

\pagestyle{myheadings}
\begin{document}

\title{Lower deviation probabilities for supercritical multi-type Galton--Watson
	processes}

\author{  Jiangrui Tan\thanks{%
 School of Mathematics and Statistics, Central South University, Changsha 410075, P.R. China. Email:tanjiangrui@csu.edu.cn
} }
\maketitle

\noindent{\bf Abstract}\quad
This paper provides a detailed analysis of the lower deviation probability properties for a $d$-type ($d>1$) Galton--Watson (GW) process $\{\bm{Z}_n=(Z_n^{(i)})_{1\le i\le d};n\ge0\}$ in both Schr\"{o}der and B\"{o}ttcher cases. We establish explicit decay rates for the following probabilities: $$\mbb{P}(\bm{Z}_n=\bm{k}_n),~ \mbb{P}(|\bm{Z}_n|\le k_n), ~\mbb{P}(Z^{(i)}_n=k_n)~~\text{and}~~\mbb{P}(Z^{(i)}_n\le k_n), 1\le i \le  d,$$ respectively, where $\bm{k}_n\in\mbb{Z}_+^d$, $|\bm{k}_n|=\mathrm{o}(c_n)$, $k_n=\mathrm{o}(c_n)$ and $c_n$ characterizes the growth rate of $\bm{Z}_n$. These results extend the single-type lower deviation theorems of Fleischmann and Wachtel (Ann. Inst. Henri Poincar\'e Probab. Statist.\textbf{43} (2007) 233-255), thereby paving the way for analysis of precise decay rates of large deviations in multi-type GW processes.  

 \vspace{0.3cm}

\noindent{\bf Keywords}\quad Multi-type Galton--Watson
processes; Lower deviations; Local limit theorem; Multi-dimensional Cramér transform

\noindent{\bf MSC}\quad Primary 60J80; Secondary 60F10.\\[0.4cm]

\bigskip

\section{Introduction}
\vspace{3mm}
We begin by reviewing key results concerning the lower deviations of a single-type supercritical Galton--Watson (GW) process. Let $Z=\{Z_n;n\geq0\}$ with $Z_0\equiv1$ denote a supercritical non-degenerate GW process, that satisfies $m:=\mbb{E}Z_1>1$ and $\mbb{P}(Z_1=j)<1$ for any $j\in \mbb{Z}_+$. The offspring generating function is defined as $f(s)=\mbb{E}s^{Z_1}=\sum_{k\ge0} p_ks^k$.\par
As established in classical literature (see, e.g., [1]), there exists a sequence of normalization constants  $\{c_n;n\ge1\}$ such that
\begin{equation}\label{cn0}
	c_n^{-1}Z_n\overset{\text{a.s.}}{\longrightarrow} W, ~\text{as}~n\to\infty. 
\end{equation} 
where $W$ is a non-degenerate limit random variable. Specifically, the normalization sequence can be expressed as $c_n=m^nL(m^n)$, where $L(\cdot)$ is a slowly varying function at $\infty$. Notably, $\lim_{x\to\infty}L(x)$ exists and is positive if and only if the condition $\mbb{E}Z_1\log Z_1<\infty$ holds. 

Furthermore, the limit variable $W$ has a strictly positive continuous density $w(\cdot)$ on $(0,\infty)$. That is, for any $0<a<b<\infty$, $\mbb{P}(a\le W\le b)=\int_{a}^bw(t)\mathrm{d}t$. This leads to the global limit theorem:
\begin{equation}\label{global}
	\lim_{n\to\infty}\mbb{P}(c_n^{-1}Z_n\ge a)=\int_a^{\infty}w(t)\mathrm{d}t,~a>0.
\end{equation}
To state lower deviation results in the single-type case, we need the following definitions: 
\begin{itemize}
	\item The offspring generating function $f$ is of type $(d,u)$ if $d\geq1$ is the greatest common divisor of the set $\{j-i:j\neq i, p_ip_j>0\}$ and $u=\min\{i:p_i>0\}$.
	\item $f$ is said to be Schr\"{o}der type if $p_0+p_1>0$, and B\"{o}ttcher type if $p_0+p_1=0$.
\end{itemize}
 Therefore, Fleischmann and Wachtel's results \cite[Theorem 4 \& 6]{lower} on single-type lower deviations can be stated as follows:
\begin{theorem}[Schr\"{o}der case]
Let the offspring generating function $f$ is of type $(d,u)$ and define $a_k=\min\{l\geq1:c_l\ge k\}$. Then for any $k_n\le c_n$ satisfying $k_n\to\infty$,
\[
\sup_{k \in [k_n, c_n]} \left| \frac{m^{n-a_k} c_{a_k}}{dw(k/(m^{n-a_k} c_{a_k}))} \mathbb{P}(Z_n = k) - 1 \right| \underset{n \uparrow \infty}{\longrightarrow} 0
\]
and
\[
\sup_{k \in [k_n, c_n]} \left| \frac{\mathbb{P}(0 < Z_n \leq k)}{\mathbb{P}(0 < W < k/(m^{n-a_k} c_{a_k}))} - 1 \right| \underset{n \uparrow \infty}{\longrightarrow} 0.
\]
When $\mbb{E}Z_1\log Z_1<\infty$ holds, then $c_{a_k}=m^{a_k}$ and we have the $a_k$-free formula:
\begin{equation}\label{ems2}
	\mbb{P}(Z_n=k)=dm^{-n}w(k/m^n)(1+\mathrm{o}(1)).
\end{equation}
\end{theorem}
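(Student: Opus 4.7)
\noindent\emph{Proof sketch.} The plan is to exploit the branching decomposition at the intermediate generation $a_k$. By the Markov property,
\[
Z_n \stackrel{d}{=} \sum_{j=1}^{Z_{a_k}} \widetilde{Z}_{n-a_k}^{(j)},
\]
where $\widetilde{Z}_{n-a_k}^{(j)}$ are i.i.d.\ copies of $Z_{n-a_k}$, independent of $Z_{a_k}$. The choice of $a_k$ is natural because $c_{a_k}\asymp k$ by definition, so at generation $a_k$ the process has typically reached the target size; for $Z_n$ to equal exactly $k$ one needs the process to remain atypically small thereafter, an event whose probability is in the Schr\"{o}der case governed by the polynomial decay of the density $w$ near zero.

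Conditioning on $Z_{a_k}=j$, I would apply a local limit theorem sharpening the global theorem \reff{global} to the sum of $j$ i.i.d.\ copies of $Z_{n-a_k}$, yielding
\[
\mathbb{P}\Bigl(\sum_{i=1}^{j}\widetilde{Z}_{n-a_k}^{(i)}=k\Bigr) \;=\; \frac{d}{c_{n-a_k}}\, w^{*j}\!\bigl(k/c_{n-a_k}\bigr)\,(1+\mathrm{o}(1)),
\]
uniformly on a suitable range, the factor $d$ arising from the lattice period of the offspring distribution. Summing over $j\ge 1$ then gives
\[
\mathbb{P}(Z_n=k) \;\approx\; \frac{d}{c_{n-a_k}}\sum_{j\ge1}\mathbb{P}(Z_{a_k}=j)\, w^{*j}\!\bigl(k/c_{n-a_k}\bigr).
\]
The crucial simplification is a branching self-similarity identity for $w$, obtained by passing to the limit in $c_n^{-1}Z_n=(c_{n-a_k}/c_n)\sum_{i} c_{n-a_k}^{-1}\widetilde{Z}_{n-a_k}^{(i)}$: together with $c_n/c_{n-a_k}\sim m^{a_k}L(m^n)/L(m^{n-a_k})$ and $c_{a_k}\sim m^{a_k}L(m^{a_k})$, this lets the weighted convolution sum be re-expressed in terms of $w$ evaluated at $k/(m^{n-a_k}c_{a_k})$, producing the claimed asymptotics. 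The second assertion about $\mathbb{P}(0<Z_n\le k)$ follows by summation in $k$ and continuity of $w$ on $(0,\infty)$, converting the Riemann-type sum into the integral defining $\mathbb{P}(0<W<k/(m^{n-a_k}c_{a_k}))$.

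The main obstacle I anticipate is the \emph{uniformity} over the whole range $k\in[k_n,c_n]$, since $a_k$ can range from $O(\log k_n)$ up to approximately $n$. Two points require care: (i) the local limit theorem must hold with an error term that is uniform in $j$ and in $k$, which calls for Fourier/generating-function analysis on a carefully chosen contour, especially near the boundary where $a_k\approx n$ and the decomposition degenerates; and (ii) the slowly varying factor $L(m^{n-a_k})/L(m^n)$ must be absorbed into a $1+\mathrm{o}(1)$ throughout, which one obtains from Potter's bounds applied on the scale $m^{a_k}$. Once these uniform estimates are established, the $a_k$-free formula \reff{ems2} drops out: under $\mathbb{E}Z_1\log Z_1<\infty$ the function $L$ admits a positive limit, hence $c_{a_k}=m^{a_k}$ (up to a constant that can be absorbed into $w$), and consequently $m^{n-a_k}c_{a_k}=m^n$.
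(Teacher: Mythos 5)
Your decomposition at the intermediate generation $a_k$ is the \emph{reverse} of the one used in Fleischmann--Wachtel (and in the present paper's multi-type analogue, Theorem~\ref{thm1}). There, one writes $n=a_k+j$ with $j=n-a_k$ and conditions on the \emph{early} population $Z_{n-a_k}=l$:
\[
\mathbb{P}(Z_n=k)=\sum_{l\ge1}\mathbb{P}(Z_{n-a_k}=l)\,\mathbb{P}\Bigl(\textstyle\sum_{i=1}^{l}\widetilde{Z}_{a_k}^{(i)}=k\Bigr),
\]
so that the local limit theorem is applied to the \emph{last} $a_k$ generations, where the argument $k/c_{a_k}\in(1/m,1]$ sits in a fixed compact subset of $(0,\infty)$, and the contributing $l$ are forced to be small by a concentration bound of the type of Lemma~\ref{lem4}. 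This is a genuine Dubuc--Seneta normal-deviation regime. Your decomposition instead conditions on $Z_{a_k}=j$ and applies an LLT to $\sum_{i=1}^{j}\widetilde{Z}_{n-a_k}^{(i)}=k$. But the $j$ carrying the mass are of order $c_{a_k}\asymp k\to\infty$, and the argument $k/c_{n-a_k}$ ranges over the whole interval $(0,\infty)$ as $k$ runs over $[k_n,c_n]$ --- in particular it tends to $0$ when $n-a_k\gg a_k$. An LLT that is uniform in growing $j$ and allows $k/c_{n-a_k}\to0$ is itself a lower-deviation local limit theorem, i.e.\ exactly the statement you are trying to prove. The argument is circular at this step.

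Even setting aside that circularity, the identity $\sum_{j\ge1}\mathbb{P}(Z_{a_k}=j)\,w^{*j}(x)=m^{-a_k}\,w(x/m^{a_k})$ turns your sum into
\[
\mathbb{P}(Z_n=k)\approx\frac{d}{m^{a_k}c_{n-a_k}}\,w\!\left(\frac{k}{m^{a_k}c_{n-a_k}}\right),
\]
whereas the theorem asserts $\mathbb{P}(Z_n=k)\approx\dfrac{d}{m^{n-a_k}c_{a_k}}\,w\!\left(\dfrac{k}{m^{n-a_k}c_{a_k}}\right)$. The two normalizations differ by $\dfrac{m^{a_k}c_{n-a_k}}{m^{n-a_k}c_{a_k}}=\dfrac{L(m^{n-a_k})}{L(m^{a_k})}$, and a slowly varying $L$ does \emph{not} satisfy $L(m^{n-a_k})/L(m^{a_k})\to1$ when both exponents diverge at incomparable rates (take $L=\log$). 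Potter's bounds give only polynomial control, not a $1+\mathrm{o}(1)$ factor, so the ``re-expression'' step does not survive unless $\mathbb{E}Z_1\log Z_1<\infty$ so that $L$ converges --- that is, the step is invalid precisely in the regime where the $a_k$-dependent formula is needed. To repair the proof you need to swap the roles of the two pieces: condition at generation $n-a_k$, estimate the compact-range LLT for $l$-fold sums of $Z_{a_k}$, prove an $\mathrm{e}^{-\delta l}$-type bound to kill large $l$ (cf.\ Lemma~\ref{lem4}), and then invoke the self-similarity identity with $j=n-a_k$ and $x=k/c_{a_k}$, which directly produces the target $m^{-(n-a_k)}c_{a_k}^{-1}w\bigl(k/(m^{n-a_k}c_{a_k})\bigr)$.
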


\begin{theorem}[B\"{o}ttcher case]
Let the offspring generating function $f$ is of type $(d,u)$ and define $b_n=\min\{l\geq1:c_lu^{n-l}\ge2k_n\}$. Then there exists positive constants $B_1$ and $B_2$ such that for all $k_n\equiv u^n (\text{mod}~d)$ with $k_n\ge u^n$ but $k_n=\mathrm{o}(c_n)$,
\begin{align*}
	-B_1 &\leq \liminf_{n \uparrow \infty} \mu^{b_n - n} \log \left[ c_n \mathbb{P}(Z_n = k_n) \right]\\
	&\leq \limsup_{n \uparrow \infty} \mu^{b_n - n} \log \left[ c_n \mathbb{P}(Z_n = k_n) \right] \leq -B_2.
\end{align*}
The above inequality remains true if $c_n \mathbb{P}(Z_n = k_n)$ is replaced by $ \mathbb{P}(Z_n \le k_n)$ and the assumption $k_n\equiv u^n (\text{mod}~d)$ is not required.
\end{theorem}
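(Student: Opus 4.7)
The proof sandwiches $c_n\mbb{P}(Z_n=k_n)$ (respectively $\mbb{P}(Z_n\le k_n)$) between two quantities of the form $\exp(-c\mu^{n-b_n})$. In the B\"{o}ttcher case every particle has at least $u$ offspring, giving the a.s.\ inequality $Z_n\ge Z_l\,u^{n-l}$, which is the structural backbone of both bounds. Conditioning on $Z_{b_n}$,
$$\mbb{P}(Z_n=k_n)=\sum_{j\ge 1}\mbb{P}(Z_{b_n}=j)\,\mbb{P}\Big(\sum_{i=1}^{j}X_i=k_n\Big),$$
with $X_1,X_2,\ldots$ i.i.d.\ copies of $Z_{n-b_n}$; the bound $X_i\ge u^{n-b_n}$ forces $j\le k_n/u^{n-b_n}\le c_{b_n}/2$ by the defining property of $b_n$, so the sum ranges only over atypically small values of $Z_{b_n}$.

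For the upper bound $\limsup\mu^{b_n-n}\log[c_n\mbb{P}(Z_n\le k_n)]\le -B_2$, I would apply Chebyshev with exponential weights at scale $u^{n-b_n}$: setting $s_n:=\exp(-\theta/u^{n-b_n})$ for a small fixed $\theta>0$,
$$\mbb{P}(Z_n\le k_n)\le s_n^{-k_n}f_n(s_n)=\exp\!\bigl(\theta k_n/u^{n-b_n}\bigr)\,f_n(s_n),$$
the prefactor being $\le\exp(\theta c_{b_n}/2)$, of lower order than the decay we extract from $f_n(s_n)$. Splitting $f_n(s_n)=f_{n-b_n}(f_{b_n}(s_n))$ and noting that $f_{b_n}(s_n)$ stays bounded away from $1$, the B\"{o}ttcher-type asymptotic $f_m(t)^{1/u^m}\to B(t)\in(0,1)$ for $t$ bounded away from $1$ yields $-\log f_n(s_n)\gtrsim u^{n-b_n}$. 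After absorbing $\log c_n=O(n)$, this gives the upper bound, which applies to $\mbb{P}(Z_n\le k_n)$ with no congruence assumption on $k_n$.

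For the lower bound $\liminf\mu^{b_n-n}\log[c_n\mbb{P}(Z_n=k_n)]\ge -B_1$, I would exhibit a specific realization matched to the right scale. The plan is to choose an auxiliary level $n_0$ and an integer $j_n\in[\varepsilon c_{n_0},c_{n_0}/2]$ such that (i) $\mbb{P}(Z_{n_0}=j_n)\gtrsim 1/c_{n_0}$ by the local limit theorem on the typical scale and (ii) the target $k_n$ can be partitioned among the $j_n$ subtrees of depth $n-n_0$ into admissible sizes $X_i\ge u^{n-n_0}$ whose joint realization probability is $\gtrsim\exp(-C\mu^{n-b_n})$. Each minimal subtree of depth $m$ contributes the explicit factor $p_u^{(u^m-1)/(u-1)}$, and subtrees of slightly larger prescribed sizes are controlled either recursively via the very statement being proved, or by direct tree-configuration enumeration. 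The congruence $k_n\equiv u^n\pmod d$ is precisely the compatibility condition ensuring such a partition exists, and the lower bound for $\mbb{P}(Z_n\le k_n)$ follows since $\{Z_n=k_n\}\subseteq\{Z_n\le k_n\}$.

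The main obstacle is the lower bound: the construction must be delicate enough that the aggregate probability of forcing all the subtrees into their atypically small targets decays only like $\exp(-c\mu^{n-b_n})$, rather than the naive $\exp(-ck_n)$ obtained by asking every subtree to be minimal. This requires a careful choice of $n_0$ and of the splitting of $k_n$ among the $j_n$ subtrees so that only a controlled fraction of them pays the minimal-tree cost. A parallel subtlety on the upper side is uniformity: both $s_n$ and $b_n$ depend on $k_n$, and one must control $f_{b_n}(s_n)$ as $b_n$ and $1-s_n$ vary jointly with $n$ over the admissible range of $k_n$.
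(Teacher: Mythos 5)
The paper does not prove this statement: it is quoted from Fleischmann and Wachtel \cite{lower} as background. The paper's own proof (Section~4) is of the multi-type analogue, Theorem~\ref{thm2}, and it follows the single-type method, so it is the right benchmark.

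\paragraph{Upper bound.} A global Chernoff bound $\mbb{P}(Z_n\le k_n)\le s_n^{-k_n}f_n(s_n)$, split via $f_n=f_{n-b_n}\circ f_{b_n}$ with the B\"ottcher asymptotic $f_m(t)^{1/u^m}\to B(t)$, is the right route and is in fact equivalent to what the paper does (it tilts at generation $n-b_n$ and sums over $Z_{n-b_n}$). But your tilting scale $1-s_n\asymp u^{-(n-b_n)}$ is wrong, and your key claim that $f_{b_n}(s_n)=\mbb{E}\,\mathrm{e}^{-\theta Z_{b_n}/u^{n-b_n}}$ stays bounded away from $1$ fails in general. When $k_n\asymp u^n$ one has $b_n=O(1)$, hence $Z_{b_n}/u^{n-b_n}\to0$ a.s.\ and $f_{b_n}(s_n)\to1$; the Chernoff estimate then degenerates to $\le1$. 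At the opposite extreme, when $c_{b_n}\gg u^{n-b_n}$, the prefactor $s_n^{-k_n}\approx\exp(\theta c_{b_n}/2)$ dominates the B\"ottcher decay $\exp(-Cu^{n-b_n})$ of $f_n(s_n)$ and the bound is again vacuous. The correct scale is $1-s_n\asymp 1/c_{b_n}$: then $f_{b_n}(s_n)\to\mbb{E}\,\mathrm{e}^{-hW}<1$ (and is bounded away from $1$ even for bounded $b_n$), the prefactor is controlled by $s_n^{-k_n}\le\exp(hu^{n-b_n}/2)$ via $c_{b_n}u^{n-b_n}\ge2k_n$, and one still needs the nontrivial inequality $\mathrm{e}^{h/2}B(\mbb{E}\,\mathrm{e}^{-hW})<1$ for some small $h$, which follows because $B'(1^-)=\lim_n m^n/u^n=+\infty$.

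\paragraph{Lower bound.} This is where the gap is essential. Conditioning on $Z_{n_0}$ at a \emph{typical} value $j_n\asymp c_{n_0}$ forces all $j_n$ subtrees of depth $n-n_0$ to be simultaneously atypically small, since their unconstrained total would be $\asymp c_{n_0}c_{n-n_0}\asymp c_n\gg k_n$. If every subtree is pushed to its minimum $u^{n-n_0}$, the cost is $\exp(-\Theta(k_n))$, far below the target $\exp(-\Theta(u^{n-b_n}))$; your proposed remedies (``only a controlled fraction pays the minimal-tree cost'', or invoking the statement recursively) are respectively incompatible with $j_n$ being of typical order and circular. The paper and Fleischmann--Wachtel do the opposite: they condition on $Z_{n-b_n}$ taking its \emph{minimum} $u^{n-b_n}$ (the multi-type $\hat{\bm{r}}_{n-b_n,i}$; see the decomposition around~\eqref{key1}). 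That event already costs $\exp(-\Theta(u^{n-b_n}))$ by iterating $p_u$, and leaves $u^{n-b_n}$ subtrees of depth $b_n$ that are each \emph{typical}. The conditional probability $\mbb{P}(Z_{b_n}=k_n\mid Z_0=u^{n-b_n})$ is then estimated by a Cram\'er tilt whose parameter is chosen so the tilted sum has mean $k_n$ (the paper's Lemma~\ref{auxlem12}), together with a local CLT (Lemma~\ref{auxlem11}) that contributes only a polynomial factor, the tilt contributing another $\exp(-\Theta(u^{n-b_n}))$. Conditioning on a minimal, not typical, early population and handling the conditional law by a tilted local limit theorem is the mechanism your outline is missing.
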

The above theorems give detailed decay rates of lower deviations of $Z_n$, i.e.,  $\mbb{P}(Z_n=k_n)$ with $k_n=\mathrm{o}(c_n)$. These results play a crucial role in the study of large deviation of the ratio of $Z_{n+1}/Z_n$ (see \cite{large,large2}), which is the well-known Lotka--Nagaev estimator of the offspring mean. 

In this paper, we obtain lower deviations and normal deviations for multi-type Galton--Watson process in both Schr\"{o}der and B\"{o}ttcher cases. These results extend naturally to multi-type processes, which parallel to Fleischmann and Wachtel's theorems in the single-type case. Existing literature has also extensively investigated the lower deviation probabilities for variant models of branching processes, such as GW processes in random environments. For specific details, see \cite{lower1,lower2,lower3}.

The remainder of this paper is structured as follows: Section 2 introduces known results on multi-type GW processes and presents our main theorems. Schr\"{o}der type and B\"{o}ttcher type asymptotic regimes are analyzed in Sections 3 and 4 respectively, with complete proofs provided for each case.

\section{ Preliminaries and main results}
\vspace{3mm}
\subsection{Notations}
The notation in this article is complex. We begin by establishing some general notations to be employed in this work.  Throughout this paper, we consider $d$-type ($d>1$) GW processes, and bold symbols often refer to $d$-dimensional row vectors or $d\times d$ matrices. Specifically:
\begin{itemize}
	\item $\bm{1}$, $\bm{0}$ and $\bm{e}_i$ represent vectors with all components being $1$, $0$ and $0$ except that $k$-th component being $1$, respectively.
	\item The $k$-th component of a vector $\bm{x}$ is written as $x^{(k)}$.
	\item Define the $\ell_{1}$ and $\ell_2$ norm of $\bm{x}$ as $|\bm{x}|=|\bm{1}\cdot\bm{x}|$ and $\Vert \bm{x}\Vert=(\sum_k (x^{(k)})^2)^{1/2}$, respectively. 
	\item Let $\bm{x}^T$ denote the transpose of $\bm{x}$, and define the component-wise operations: \[
	\bm{x}^{\bn{y}}=\prod_k (x^{(k)})^{y^{(k)}},~~\mathrm{e}^{\bm{x}}=(\mathrm{e}^{x^{(k)}})_{k\geq1},~~\log \bm{x}=(\log x^{(k)})_{k\ge1}.\]
	\item The partial orders $\bm{x}\le\bm{y}$ denote $x^{(k)}\leq y^{(k)}$ for all $k$, $\bm{x}<\bm{y}$ means the same but requiring strict inequality in at least one component $k$ and $\bm{x}\ll\bm{y}$ means $x^{(k)} < y^{(k)}$ for all $k$.
	\item Conditional expectations and probabilities are denoted by $\mbb{E}_i(\cdot)=\mbb{E}(\cdot|\bm{Z}_0=\bm{e}_i)$ and $\mbb{P}_i(\cdot)=\mbb{P}(\cdot|\bm{Z}_0=\bm{e}_i)$.

\end{itemize}
Notational conventions:
\begin{itemize}
	\item Without ambiguity, we will reuse some notations that appear in the previous section to emphasize their conceptual connections. 
	\item Symbols $C$, $C'$, $\delta$ and $\delta'$ (with possible subscripts) represent strictly positive constants that may vary line-by-line, unless otherwise specified.
	\item  The upright ``$\mathrm{i}$" specifically denotes the imaginary unit, while the italicized ``$i$" usually refers to the initial particle type.
	\item Big-O notation ($\mathrm{O}$) and little-o notation ($\mathrm{o}$) have their usual meanings in describing the asymptotic behavior of functions.
		\item Other symbols will be explicitly declared when first introduced.
\end{itemize}
\subsection{Classification and assumptions}
Consider a $d$-type Galton--Watson process $Z=\{\bm{Z}_n=(Z_n^{(i)})_{1\leq i\leq d};$ $n\geq0\}$ (see \cite[Chapter V]{Athreya1972Branching} for detailed definition) with offspring generating function $f^{(i)}(\bm{s})=\mbb{E}_i\bm{s}^{\bn{Z}_1}$ and
\begin{equation}\label{fs}
\bm{f}(\bm{s})=(f^{(1)}(\bm{s}),f^{(2)}(\bm{s}),\cdots,f^{(d)}(\bm{s})),\quad \bm{s}=(s_1,s_2,\cdots,s_d)\in[0,1]^d.\end{equation}
  Define $$
D_{ij}(\bm{s})=\cfrac{\partial\,f^{(i)}}{\partial\,s_j}(\bm{s}),\quad a_{ij}=D_{ij}(\bm{0}),\quad m_{ij}=D_{ij}(\bm{1}),
$$
and $\bm{A}=(a_{ij})_{1\le i,j \le d}$, $\bm{M}=(m_{ij})_{1\le i,j\le d}$. Clearly, $\bm{M}$ is the mean matrix of $\{\bm{Z}_n;n\geq0\}$. Denote the maximal eigenvalue of $\bm{M}$ by $\rho$.  Denote the $n$-th iterate of $\bm{f}$ by $\bm{f}_n$. \par We only consider the supercritical case, i.e., $\rho>1$. The normalized right and left eigenvectors of $\bm{M}$ are denoted by $\bm{u}$ and $\bm{v}$ respectively, satisfying $\bm{v},\bm{u}\gg\bm{0}$, $\bm{v}\cdot\bm{u}=1$ and $\bm{1}\cdot\bm{v}=1$. To avoid trivialities, we assume $\bm{f}(\bm{0})=\bm{0}$, and the process $\{\bm{Z}_n;n\geq0\}$ is nonsingular and positive regular, that is, $\bm{M}$ is irreducible and $\bm{f}(\bm{s})\neq \bm{M}\bm{s}^T$ component-wise. 

   Similarly to the single-type case, there exists a sequence of constants $\{c_n\}$ such that
\begin{equation}\label{cn}
	c_n^{-1}\bm{Z}_n\overset{\text{a.s.}}{\longrightarrow}~\text{some non-degenerate}~~W\bm{v} 
\end{equation} 
$\text{component-wise} ~\text{as}~n\to\infty$. The normalized sequence $\{c_n\}$ can be chosen to satisfy
\begin{equation}\label{cn1}
	c_0=1, c_n\le c_{n+1} \le \rho c_{n} ~\text{and} ~c_n=\rho^nL(\rho^n),~n\ge1,
	\end{equation}
   where $L(\cdot)$ varying slowly at $\infty$ and $\lim_{x\to\infty}L(x)$ is positive if and only if \begin{equation}\label{llogl}
   	\mbb{E}_i|\bm{Z}_1|\log |\bm{Z}_1|<\infty~~\text{for all}~~i.
   \end{equation} Moreover,  there exist positive continuous density functions $w_i,i>0$, such that $\mbb{P}_i(a\le W\le b)=\int_{a}^bw_i(t)\mathrm{d}t$ for any positive constants $a$ and $b$. 

  In the multi-type case, the matrix $\bm{A}$ plays a role analogous to that of $p_1$ in single type setting. Following \cite[Lemma 4.5]{JR2}, the asymptotic behavior of $\bm{A}$ can be classified into three distinct regimes:
\begin{itemize}
    \item  Schr\"{o}der Case: there exist $0<\gamma<1$ and positive integer $h$ such that $\gamma^{-n}\bm{A}^{nh}$ converges to a nonzero matrix $\bm{P}_0$ with finite entries;
    \item  B\"{o}ttcher Case: $\bm{A}$ is a nilpotent matrix, meaning $\bm{A}^j=\bm{0}$ for some positive integer $j$;
    \item  Intermediate Case: there exist $0<\hat{\gamma}<1$ and positive integer $\hat{h}$ such that $n^{-1}\hat{\gamma}^{-n}\bm{A}^{n\hat{h}}$ converges to a nonzero matrix with finite entries. 
\end{itemize}
We restrict our analysis to the first two cases (the Schr\"{o}der and B\"{o}ttcher cases), with the additional assumption $h=1$ in the Schröder case. In the following two subsections, we present the key properties and main results for each of these regimes separately.

\subsection{Schr\"{o}der case}
We begin by introducing the extreme lower deviation rate in the Schr\"{o}der case. From \cite[Theorem 1]{AV99}, 
\begin{equation}\label{en1}
	\bm{Q}_n(\bm{s}):=\cfrac{\bm{f}_n(\bm{s})}{\gamma^n}\rightarrow \bm{Q}(\bm{s}),~\bn{0}\le\bm{s}<\bm{1},
\end{equation}
component-wise $\text{as}~n\rightarrow \infty$. $\bm{Q}(\bm{s})$ is the unique solution of the  vector-valued functional equation $$
\bm{Q}(\bm{f}(\bm{s}))=\gamma\bm{Q}(\bm{s}) .$$
This implies that for any $i>0$ and $\bm{k}\in\mbb{Z}_+^d$,  $\lim_{n\to\infty}\gamma^{-n}\mbb{P}_i(\bm{Z}_n=\bm{k})$ exists and is positive.

Building on the framework of \cite{lower}, we aim to characterize extended lower deviation properties for multi-type GW processes. Specifically, we investigate the decay rate of $$\mbb{P}_i(\bm{Z}_n=\bm{k}_n),~~~\bm{k}_n\in\mbb{Z}^d~~\text{and}~~|\bm{k}_n|\le c_n.$$ 
 However, significant distinctions emerge compared to the single-type case. Recalling (\ref{cn}), the normalized population $\bm{Z}_n$ converges almost surely to the deterministic vector $W\bm{v}$. Consequently, we need to classify $\bm{k}_n$ according to its asymptotic relationship with $\bm{v}$. For $\bm{k}\in\mbb{Z}_+^d$, define its normalized distance with $\bm{v}$ as
\[
\varepsilon_v(\bm{k})=\left\Vert\frac{\bm{k}}{|\bm{k}|}-\bm{v}\right\Vert.
\]
For vectors $\bm{k}_1,\bm{k}_2\in\mbb{Z}_+^d$,  we define the partial order  $\bm{k}_1\succeq_{\bn{v}} \bm{k}_2$ if and only if
 $$|\bm{k}_1|\ge|\bm{k}_2|~~~\text{and}~~~\varepsilon_v(\bm{k}_1)\le\varepsilon_v(\bm{k}_2).$$ We say $\bm{k}\overset{\bn{v}}{\to}\infty$ if $$|\bm{k}|\to\infty ~~~\text{and}~~~\varepsilon_v(\bm{k})\to 0.$$ For ease of comparison, we adopt some notations of Fleischmann and Wachtel's theorems in the single-type framework. Our main result for the Schr\"{o}der case is stated as follows.
\begin{theorem}\label{thm1}
Define $a_k=a(\bm{k})=\min\{l\geq1:c_l\ge |\bm{k}|\}$ and $a'_k=a(k\bm{1})$. Then, for $1\le i \le d$,
	\begin{equation}\label{teq1}
	\sup_{\textbf{k}\hspace{0.2em}\succeq_{\textbf{v}}\hspace{0.2em}\tilde{\textbf{k}},~j\ge0			}\left| \frac{\rho^{j} c^d_{a_k}}{w_i(|\bm{k}|/(\rho^{j} c_{a_k}))} \mathbb{P}_i(\bm{Z}_{a_k+j} = \bm{k}) - 1 \right| \longrightarrow 0,~\text{as}~\tilde{\textbf{k}}\overset{\textbf{v}}{\longrightarrow}\infty
	\end{equation}
	and
\begin{equation}\label{teq2}
	\sup_{k\ge \tilde{k},~j\ge0	} \left| \frac{\mathbb{P}_i( |\bm{Z}_{a'_k+j}| \leq k)}{\mathbb{P}_i(0 < W < k/(\rho^{j} c_{a'_k}))} - 1 \right| \longrightarrow 0,~\text{as}~\tilde{k}\to\infty.
	\end{equation}
\end{theorem}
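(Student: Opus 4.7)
The approach adapts Fleischmann--Wachtel's two-scale decomposition, splitting the $n=a_k+j$ generations at the intermediate time $a_k$. At time $a_k$, the relation $c_{a_k}\asymp|\bm{k}|$ places the process at the boundary of the ``global'' regime, where $\bm{Z}_{a_k}/c_{a_k}$ is still of order one and the martingale limit $W$ is visible via its density $w_i$ in the Perron--Frobenius direction $\bm{v}$. The remaining $j$ generations then act on a large initial population for which a multi-type local central limit theorem is available.

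Concretely, the first step is the branching/Markov decomposition
\begin{equation*}
\mathbb{P}_i(\bm{Z}_{a_k+j}=\bm{k})=\sum_{\bm{z}\in\mathbb{Z}_+^d}\mathbb{P}_i(\bm{Z}_{a_k}=\bm{z})\,\mathbb{P}_{\bm{z}}(\bm{Z}_j=\bm{k}),
\end{equation*}
where $\mathbb{P}_{\bm{z}}$ denotes the law starting from $\bm{z}$ particles, i.e.\ the convolution of $z^{(l)}$ independent copies of $\mathbb{P}_l$. The first factor is controlled by a multi-dimensional local limit theorem at time $a_k$: for $\bm{z}$ with $|\bm{z}|\asymp c_{a_k}$ and direction close to $\bm{v}$, this probability is asymptotic to $c_{a_k}^{-d}w_i(|\bm{z}|/c_{a_k})$ up to a lattice-geometric factor that reflects the $d-1$ ``transverse'' coordinates orthogonal to $\bm{v}$.

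The second step is a uniform multi-type local CLT for the last $j$ generations. Using the characteristic-function identity $\mathbb{E}_{\bm{z}}e^{\mathrm{i}\bm{t}\cdot\bm{Z}_j}=\prod_l[\mathbb{E}_le^{\mathrm{i}\bm{t}\cdot\bm{Z}_j}]^{z^{(l)}}$ together with the Perron--Frobenius analysis of $\bm{M}^j$, I would show that $\mathbb{P}_{\bm{z}}(\bm{Z}_j=\bm{k})$ is approximated by a $d$-dimensional Gaussian density centred at $\bm{z}\bm{M}^j\sim\rho^j(\bm{z}\cdot\bm{u})\bm{v}$ with covariance of order $|\bm{z}|\rho^{2j}$. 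The alignment assumption $\bm{k}\succeq_{\bm{v}}\tilde{\bm{k}}$ guarantees that the dominant contribution in the sum comes from $\bm{z}$ whose image $\bm{z}\bm{M}^j$ lies near $\bm{k}$; a Laplace/saddle-point localisation in $\bm{z}$ then collapses the sum to the asserted form $w_i(|\bm{k}|/(\rho^j c_{a_k}))\big/(\rho^j c_{a_k}^d)$, where the factor $\rho^j$ tracks the $\bm{v}$-direction growth and $c_{a_k}^{d}$ absorbs the joint lattice normalization of $\bm{Z}_{a_k}$.

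The integrated statement \eqref{teq2} will follow by summing the pointwise asymptotic over $\bm{k}$ with $|\bm{k}|\le k$ and converting the resulting Riemann sum into $\int_0^{k/(\rho^j c_{a'_k})}w_i(t)\,\mathrm{d}t=\mathbb{P}_i(0<W<k/(\rho^j c_{a'_k}))$. The principal obstacle is establishing the local CLT with enough uniformity in the three parameters $\bm{z}$, $\bm{k}$, and $j\ge 0$; in particular, bridging the small-$j$ regime (where the CLT is still dominated by fluctuations inherited from $\bm{Z}_{a_k}$) and the large-$j$ regime (where the Perron--Frobenius direction dictates the behaviour), while also handling the slowly varying factor $L$ so that $\rho^j c_{a_k}$ faithfully represents $c_{a_k+j}$. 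A secondary difficulty, absent in the one-dimensional setting, is showing that poorly $\bm{v}$-aligned $\bm{z}$ and $\bm{k}$ contribute negligibly; this must be done through characteristic-function decay estimates away from the Perron direction combined with the assumption $\varepsilon_v(\bm{k})\to 0$.
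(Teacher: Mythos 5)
Your proposed Markov decomposition is reversed compared to the one that actually closes the argument, and this is not a cosmetic difference. You write
\[
\mathbb{P}_i(\bm{Z}_{a_k+j}=\bm{k})=\sum_{\bm{z}}\mathbb{P}_i(\bm{Z}_{a_k}=\bm{z})\,\mathbb{P}_{\bm{z}}(\bm{Z}_j=\bm{k}),
\]
splitting at time $a_k$, with the local limit theorem applied at time $a_k$ and a Gaussian/CLT estimate applied to the last $j$ generations. The paper (following Fleischmann--Wachtel) splits at time $j$:
\[
\mathbb{P}_i(\bm{Z}_{a_k+j}=\bm{k})=\sum_{\bm{l}>\bm{0}}\mathbb{P}_i(\bm{Z}_{j}=\bm{l})\,\mathbb{P}(\bm{Z}_{a_k}=\bm{k}\mid\bm{Z}_0=\bm{l}),
\]
so the first $j$ generations are controlled by the Schr\"oder decay $\mathbb{P}_i(\bm{Z}_j=\bm{l})\asymp\gamma^j$ for bounded $\bm{l}$, and the local limit theorem is applied on the last $a_k$ generations from a \emph{bounded} initial condition $\bm{l}$, at the bulk scale $|\bm{k}|\asymp c_{a_k}$.

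In your decomposition the second factor $\mathbb{P}_{\bm{z}}(\bm{Z}_j=\bm{k})$ is not in a local-CLT regime for $j\ge1$. Since $\bm{f}(\bm{0})=\bm{0}$, $|\bm{Z}_j|\ge|\bm{z}|$, so only $|\bm{z}|\le|\bm{k}|\le c_{a_k}$ contribute at all; and for the $j$-step law to hit $\bm{k}$ non-negligibly you need $|\bm{z}|\asymp\rho^{-j}|\bm{k}|$, which is far below $c_{a_k}$ once $j$ grows. For such $\bm{z}$ the first factor $\mathbb{P}_i(\bm{Z}_{a_k}=\bm{z})$ is itself a lower-deviation probability — exactly the quantity the theorem is trying to estimate, so the argument becomes circular. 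Conversely, for $|\bm{z}|\asymp c_{a_k}$, where your local limit theorem at time $a_k$ does apply, $\bm{z}\bm{M}^j$ is of order $\rho^j c_{a_k}\gg|\bm{k}|$, so $\bm{k}$ lies in the deep left tail of the $j$-step Gaussian approximation, where the CLT error terms dominate and the approximation is not valid. You flag the ``small-$j$/large-$j$'' bridging as the principal obstacle, but the obstacle is built into the order of the split; flipping it removes the problem.

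A second difference: you propose a Laplace/saddle-point localisation in $\bm{z}$ to produce the factor $w_i(|\bm{k}|/(\rho^j c_{a_k}))/(\rho^j c_{a_k}^d)$. The paper instead uses the exact identity
\[
\sum_{\bm{l}>\bm{0}}\mathbb{P}_i(\bm{Z}_j=\bm{l})\,\bm{w}^{\ast\bm{l}}(y)=\rho^{-j}\,w_i\!\left(y/\rho^j\right),
\]
which follows from the functional equation $\boldsymbol{\varphi}(\rho^j x)=\bm{f}_j(\boldsymbol{\varphi}(x))$ for the Laplace/Fourier transform of $W$. This yields the $\rho^{-j}$ factor and the argument of $w_i$ with no approximation error, whereas a saddle-point approach would only give it asymptotically and would require extra uniformity arguments in $j$. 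The remaining ingredients you would still need — a multi-type concentration bound to discard large $\bm{l}$ (the paper's Lemma~\ref{lem4}) and a multi-type local limit theorem that is sensitive to the direction $\bm{v}$ (Lemma~\ref{auxlem5}) — are in the right spirit in your write-up, but they must be applied at time $a_k$ conditional on a small starting vector $\bm{l}$, not at time $j$ with a $c_{a_k}$-sized starting vector.
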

Clearly, this implies the following conclusion.
\begin{corollary}\label{cor1}
Suppose $|\bm{k}_n|\le c_n$ and $\bm{k}_n\overset{\textbf{v}}{\longrightarrow}\infty$ as $n\to\infty$. Then, for $1\le i \le d$,
	\[
\sup_{c_n\bn{1}~\succeq_{\textbf{v}}~\textbf{k}~\succeq_{\textbf{v}}~\textbf{k}_n			} \left| \frac{\rho^{n-a_k} c^d_{a_k}}{w_i(|\bm{k}|/(\rho^{n-a_k} c_{a_k}))} \mathbb{P}_i(\bm{Z}_{n} = \bm{k}) - 1 \right| \longrightarrow 0,~\text{as}~n\longrightarrow\infty
\]
and
\[
\sup_{c_n\bn{1}~\succeq_{\textbf{v}}~\textbf{k}~\succeq_{\textbf{v}}~\textbf{k}_n} \left| \frac{\mathbb{P}_i( |\bm{Z}_{n}| \leq k_n)}{\mathbb{P}_i(0 < W < k/(\rho^{n-a'_k} c_{a'_k}))} - 1 \right| \longrightarrow 0,\text{as}~n\longrightarrow\infty.
\]	
\end{corollary}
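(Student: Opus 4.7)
The plan is to derive the corollary as an almost immediate specialization of Theorem~\ref{thm1}: I would apply the theorem with $\tilde{\bm{k}}:=\bm{k}_n$ in the first display and with $\tilde{k}:=k_n$ in the second, and then perform the substitution $j:=n-a_k$ (respectively $j:=n-a'_k$). Under this substitution, the generation index $\bm{Z}_{a_k+j}$ appearing in Theorem~\ref{thm1} becomes the fixed index $\bm{Z}_n$ of the corollary, and the quantity inside $|\cdot|$ is term-by-term identical in the two statements. So the corollary is essentially a repackaging of Theorem~\ref{thm1} in a form where the generation $n$, rather than $a_k+j$, is the outer parameter.

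The one thing that needs verification is that $j\ge 0$ throughout the corollary's sup range $c_n\bm{1}\succeq_{\bm{v}}\bm{k}\succeq_{\bm{v}}\bm{k}_n$. The upper bound yields $|\bm{k}|\le c_n d$, and the growth estimate $c_l\le c_{l+1}\le \rho c_l$ from \eqref{cn1} together with the slow variation of $L$ gives $c_{n+O(1)}\ge c_n d$, hence $a_k=\min\{l\ge 1:c_l\ge|\bm{k}|\}\le n+O(1)$; the resulting bounded shift can be absorbed into the error since the density $w_i$ is continuous and $c_{a_k+O(1)}/c_{a_k}$ is bounded. The lower bound $\bm{k}\succeq_{\bm{v}}\bm{k}_n$ together with the hypothesis $\bm{k}_n\overset{\bm{v}}{\to}\infty$ is precisely the convergence condition demanded by Theorem~\ref{thm1} applied with $\tilde{\bm{k}}=\bm{k}_n$. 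In particular the corollary's sup range is contained in the sup range of Theorem~\ref{thm1}, so the corollary's supremum is dominated by the theorem's and therefore tends to zero.

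The second display is handled identically, invoking the second assertion of Theorem~\ref{thm1} with $\tilde{k}=k_n\to\infty$ and the scalar index $a'_k$ in place of $a_k$. I do not expect any genuine obstacle: all the probabilistic content, including the delicate uniformity as $\tilde{\bm{k}}\overset{\bm{v}}{\to}\infty$, is already furnished by Theorem~\ref{thm1}. The only point requiring care is the bookkeeping around $a_k$ near the upper end of the sup range, which is controlled by the geometric-type growth of the normalizing sequence recorded in \eqref{cn1}.
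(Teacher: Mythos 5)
Your approach is exactly what the paper intends: the paper offers no proof of Corollary~\ref{cor1} beyond the phrase ``Clearly, this implies the following conclusion,'' and your specialization $j:=n-a_k$ (resp.\ $j:=n-a'_k$) with $\tilde{\bm k}:=\bm k_n$ is precisely the reparametrization that turns Theorem~\ref{thm1} into the corollary.

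One caveat on the boundary check. Your worry about $a_k$ exceeding $n$ arises only because you read the upper constraint $c_n\bm 1\succeq_{\bm v}\bm k$ literally as $|\bm k|\le dc_n$. As you implicitly notice, that literal reading is untenable anyway: it would also force $\varepsilon_v(\bm k)\ge\varepsilon_v(c_n\bm1)=\|\bm1/d-\bm v\|$, a fixed positive constant unless $\bm v=\bm1/d$, which would contradict $\varepsilon_v(\bm k)\le\varepsilon_v(\bm k_n)\to0$ and empty the sup range. The upper bound is meant as $|\bm k|\le c_n$ (paralleling the single-type range $[k_n,c_n]$), and then $c_n\ge|\bm k|$ gives $a_k\le n$, hence $j\ge0$, by the very definition of $a_k$; no shift has to be ``absorbed.'' If you do insist on allowing $|\bm k|\le dc_n$, then your absorption step is not actually a use of Theorem~\ref{thm1}, since that theorem's supremum is restricted to $j\ge0$ and says nothing about $j<0$; you would have to go back to the underlying local limit estimates to handle the finitely many extra terms, rather than invoking boundedness of $c_{a_k+O(1)}/c_{a_k}$ and continuity of $w_i$ as a shortcut. (Your invocation of slow variation there is right, though: $c_l\le c_{l+1}\le\rho c_l$ alone gives no lower growth rate; it is $c_n=\rho^nL(\rho^n)$ with $L$ slowly varying that yields $c_{n+m}\ge dc_n$ for $n$ large and fixed $m$.)
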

\begin{remark}
	The notation $a_k$ makes the above theorems and corollaries appear complex. We refer the reader to \cite[page 239]{lower} for detailed discussion on $a_k$. When $\sup_i\mbb{E}_i|\bm{Z}_1|\log |\bm{Z}_1|<\infty$ is satisfied, then $c_{a_k}=\rho^{a_k}$ and we have
	\[
	\mbb{P}_i(\bm{Z}_n=\bm{k})=\rho^{n-a_k}\rho^{da_k}w_i(|\bm{k}|/\rho^n)(1+\mathrm{o}(1)).\]
Compared to (\ref{ems2}), the equation above appears counterintuitive.  In fact, as will be demonstrated in the subsequent proof of the theorem, the key distinction from the single-type case is the decay rate for conditional probability:
\[
\mbb{P}(\bm{Z}_{a_k}=\bm{k}|\bm{Z}_0=\bm{l})=c_{a_k}^{-d}\bm{w}^{\ast\textbf{l}}(|\bm{k}|/c_{a_{k}})(1+\mathrm{o}(1)),\]
where $\bm{w}^{\ast\textbf{l}}(\cdot)=\prod_iw^{\ast l^{(i)}}_i(\cdot)$ (in the single-type case, the term $c_{a_k}^{-d}$  is replaced by $c_{a_k}^{-1}$). Therefore, together with
\[
\mbb{P}_i(\bm{Z}_n=\bm{k})=\sum_{\textbf{l}> 0}\mbb{P}(\bm{Z}_{n-a_k}=\bm{l})\mbb{P}(\bm{Z}_{a_k}=\bm{k}|\bm{Z}_0=\bm{l})\]
and
\begin{equation}
	\sum_{\textbf{l}> \bn{0}}\mbb{P}_i(\bm{Z}_j=\bm{l})\bm{w}^{\ast\textbf{l}}(k/c_{a_k})=\rho^{-j}w_i(|\bm{k}|/(c_{a_{k}}\rho^{j})),\end{equation}
roughly speaking, we can obtain Corollary \ref{cor1}.	
\end{remark}

In practical applications, we may need to estimate the lower deviation probability for each type of $\{\bm{Z}_n;n\ge0\}$. The following lemma addresses this issue. By an abuse of notation, let $a_k=\min\{l\geq1:c_l\ge k\}$.
\begin{theorem}\label{thm1.1}
 For $1\le i, m \le d$,
\begin{equation}\label{teq4}
	\sup_{k\ge\tilde{k},~j\ge0	}\left| \frac{\rho^{j} c_{a_k}}{w_i(k/(\rho^{j} c_{a_k}v^{(m)}))} \mathbb{P}_i(Z^{(m)}_{a_k+j} = k) - 1 \right| \longrightarrow 0,~\text{as}~\tilde{k}\to\infty
	\end{equation}
	and
	\begin{equation}\label{teq2n}
		\sup_{k\ge \tilde{k}, j\ge0	} \left| \frac{\mathbb{P}_i( Z^{(m)}_{a_k+j} \leq k)}{\mathbb{P}_i(0 < W < k/(\rho^{j} c_{a_k}v^{(m)}))} - 1 \right| \longrightarrow 0,~\text{as}~\tilde{k}\to\infty.
	\end{equation}
\end{theorem}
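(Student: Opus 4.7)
The plan is to decompose via the branching property and reduce to a conditional one-dimensional local limit theorem. Starting from
\begin{equation*}
\mbb{P}_i(Z^{(m)}_{a_k+j}=k)=\sum_{\bm{l}\in\mbb{Z}_+^d}\mbb{P}_i(\bm{Z}_j=\bm{l})\,\mbb{P}(Z^{(m)}_{a_k}=k\mid\bm{Z}_0=\bm{l}),
\end{equation*}
obtained by conditioning on $\bm{Z}_j$ and invoking the branching property, the argument reduces to combining a conditional local limit theorem for the marginal $Z^{(m)}_{a_k}$ with the summation identity already exploited in the Remark following Corollary~\ref{cor1}.

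The key step is to establish the marginal conditional local limit theorem
\begin{equation*}
\mbb{P}(Z^{(m)}_{a_k}=k\mid\bm{Z}_0=\bm{l})=\frac{\bm{w}^{\ast\bm{l}}(k/(c_{a_k}v^{(m)}))}{c_{a_k}}(1+\mathrm{o}(1)),
\end{equation*}
the one-coordinate projection of the joint estimate $\mbb{P}(\bm{Z}_{a_k}=\bm{k}\mid\bm{Z}_0=\bm{l})\sim c_{a_k}^{-d}\bm{w}^{\ast\bm{l}}(|\bm{k}|/c_{a_k})$ used in the proof of Theorem~\ref{thm1}. I would derive this by a one-dimensional Fourier inversion for $Z^{(m)}_{a_k}$, parallel to the multi-dimensional Cram\'er transform approach used for Theorem~\ref{thm1}; the scaling $Z^{(m)}_{a_k}/c_{a_k}\to\tilde{W}v^{(m)}$ (where $\tilde{W}$ has density $\bm{w}^{\ast\bm{l}}$) accounts for the factor $v^{(m)}$ appearing inside the argument of $\bm{w}^{\ast\bm{l}}$.

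Given this and the identity
\begin{equation*}
\sum_{\bm{l}>\bm{0}}\mbb{P}_i(\bm{Z}_j=\bm{l})\,\bm{w}^{\ast\bm{l}}(x)=\rho^{-j}w_i(x/\rho^j),\qquad x>0,
\end{equation*}
applied at $x=k/(c_{a_k}v^{(m)})$, I obtain (\ref{teq4}) immediately. The uniformity in $k\ge\tilde{k}$ and $j\ge 0$ is inherited from the uniform control in the conditional local limit theorem together with the continuity of $w_i$. The CDF estimate (\ref{teq2n}) is then produced in the same way, either by replacing the conditional density $\bm{w}^{\ast\bm{l}}$ with its integrated version and invoking the global limit theorem (\ref{global}) in place of the density, or by summing (\ref{teq4}) over $k'\le k$ and handling the contribution of small $k'$ separately via the tail estimate of $W$ near $0$.

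The main technical obstacle is the uniform marginal conditional local limit theorem. Unlike its joint counterpart, it requires Fourier inversion in the single variable conjugate to $Z^{(m)}$, and the characteristic function $\mbb{E}[\exp(\mathrm{i} t Z^{(m)}_{a_k})\mid\bm{Z}_0=\bm{l}]$ must be controlled uniformly over $t\in[-\pi,\pi]$---in particular near $t\to\pm\pi$, where the effective lattice spacing of $Z^{(m)}_{a_k}$ inherited from the $d$-dimensional offspring support structure becomes delicate. Uniformity in $\bm{l}$ across the range that contributes significantly to the sum is also needed and would follow by a continuity/approximation argument on the density family $\{\bm{w}^{\ast\bm{l}}\}_{\bm{l}}$, together with the tail bound on $|\bm{Z}_j|$ that makes only $\bm{l}$ with $|\bm{l}|\lesssim\rho^j$ contribute.
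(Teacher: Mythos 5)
Your proposal matches the paper's approach essentially step for step: condition on $\bm{Z}_j$ via the Markov property, establish the marginal conditional local limit theorem $c_{a_k}\mathbb{P}(Z^{(m)}_{a_k}=k\mid\bm{Z}_0=\bm{l})\to\bm{w}^{\ast\bm{l}}(k/(c_{a_k}v^{(m)}))$ by a one-dimensional Cram\'er transform and Fourier inversion applied to the marginal generating function $f^{(i)}_{m,n}(s)=\mathbb{E}_i s^{Z_n^{(m)}}$, and then resum using the identity $\sum_{\bm{l}>\bm{0}}\mathbb{P}_i(\bm{Z}_j=\bm{l})\bm{w}^{\ast\bm{l}}(x)=\rho^{-j}w_i(x/\rho^j)$. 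The technical obstacle you flag (controlling the marginal characteristic function up to $|t|=\pi$, i.e.\ the lattice structure of $Z^{(m)}$) is precisely what the paper's one-dimensional analogues of Lemmas~\ref{lem1}--\ref{lem2} are designed to handle, via aperiodicity from Condition (\textbf{C1}) and irreducibility of $\bm{M}$.
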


\subsection{B\"{o}ttcher case}
In this subsection, we present the properties and main results for a multi-type GW process in the B\"{o}ttcher case. \par At first, let us review some properties of the single-type GW process.  Recalling $u=\min\{i:p_i>0\}$, its offspring generating function $f$ is of B\"{o}ttcher type if $\mu\ge2$. A hallmark of this regime is the absence of extreme lower deviation events: specifically, 
\begin{equation}\label{kuz1}
\mbb{P}(Z_n<\mu^n)=0,~~\text{for all}~n\ge1.
\end{equation}	 Furthermore, from \cite[Theorem 6.9]{kuz}, the normalized offspring generating function sequence converges to a nontrivial limit:
\begin{equation}\label{kuz}
	\lim_{n\to\infty}(f_n(s))^{\mu^{-n}}>0,
\end{equation} where $f_n$ is the $n$-th iterate of $f$.\par
In the multi-type GW process framework, the generalization of the growth rate parameter 
$\mu$ requires careful construction. We begin by defining key combinatorial sets:\begin{itemize}
	\item Let~$J_{n,i}=\{\bm{j}\in\mbb{Z}_{+}^d:\mbb{P}_i(\bm{Z}_n=\bm{j})>0\}$, the set of accessible population vectors at generation 
	$n$ starting from type $i$.
	\item Let $\hat{J}_{n,i}$ be the lower boundary of $J_{n,i}$ with respect to the usual partial ordering on $\mbb{Z}_+^{d}$.
\end{itemize} 
For matrix operations, let $\bm{B}[k, \cdot]$ represent the $k$-th row vector of matrix $\bm{B}$. Define the minimal accessible matrix set:  $$K_n=\{\bm{B}\in\mbb{Z}_+^{d\times d }:\bm{B}[i,\cdot]\in\hat{J}_{n,i}, 1\le i \le  d\}$$ and the coordinate-wise minimal functional: $$\mathcal{K}_n^{(i)}(\bm{x})=\min_{\bn{s}\in\hat{J}_{n,i}}\bm{s}\cdot\bm{x},~~\bm{x}\in \mbb{R}^d.$$  Then the global minimization is then encapsulated as: $$
\mathcal{K}_n(\bm{x})=(\mathcal{K}_n^{(i)}(\bm{x}))_{1\le i \le d}=:\min_{\bn{B}\in K_n}\bm{B}\bm{x}.$$  
A critical recursive structure emerges from \cite[Lemma 1]{jones2}, for all $n>1$ and $\bm{x}\in\mbb{R}^d$,
\begin{equation}\label{e201}
	\mathcal{K}_n(\bm{x})=\mathcal{K}_1(\mathcal{K}_{n-1}\bm{x})=\min_{\bn{B}_1,\bn{B}_2,\cdots,\bn{B}_n\in K_1}\bm{B}_1\cdots \bm{B}_n\bm{x}.\end{equation} 
Finally, we define the multi-type growth rate parameter: $$\mu=\min_{\bn{B}\in K_1}\Vert \bm{B}\Vert,$$
where $\Vert \bm{B} \Vert$ denotes its spectral norm. By design, this definition aligns notationally with the single-type case which encodes the minimal asymptotic growth across interacting types. \par

Let us introduce the following condition:

\textbf{Condition (C1):}~There exists a unique (up to a scalar factor) $\bm{x}_0\gg\bm{0}$, a unique $\bm{B}_0\in K_1$ and $\mu_0\in(1,\rho)$ such that $\mathcal{K}(\bm{x}_0)=\bm{B}_0\bm{x}_0=\mu_0\bm{x}_0$. Furthermore, $\lim_{n\to\infty}\mu_0^{-n}\bm{B}_0^n$ exists.

Under Condition (\textbf{C1}), \cite[Theorem 2]{jones2} establishes two fundamental results: 
\begin{itemize}
	\item Growth rate identification: The minimal growth rate satisfies $\mu=\mu_0$.
	\item Logarithmic generating function limit: For any $\bm{s}\in(0,1]^d$, the scaled limit
\begin{equation}\label{e101}
	\bm{G}(\bm{s}):=\lim_{n\to\infty}-\mu^{-n}\log \bm{f}_n(\bm{x})
\end{equation} 
exists. Furthermore, the exponentiated limit $\overline{\bm{G}}(\bm{x}):=\exp \bm{G}(\bm{s})$ solves the multivariate B\"ottcher equation:
 $$\overline{\bm{G}}(\bm{f}(\bm{x}))=\overline{\bm{G}}(\bm{x})^{\mu\textbf{1}},$$
 where $\bm{1}$ is the all-ones vector.
 \end{itemize} 
 Define the minimal offspring vector at generation $n$: $$\hat{\bm{r}}_{n,i}:=\arg \min\{\bm{s}\in J_{n,i}:|\bm{s}|\}.$$ The parameter $\mu$ thus quantifies the minimal per-generation reproductive effort,  as formalized in the following proposition.
\begin{proposition}\label{auxlem8}
 For any $i$, there exist constants $0<\lambda_i<1<\lambda'_i$ such that, for $n\ge1$, \begin{equation}\label{jl1}\lambda_{i}\le \mu^{-n}|\hat{\bm{r}}_{n,i}| \le \lambda'_{i}.\end{equation}
	\end{proposition}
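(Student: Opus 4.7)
The plan is to identify $|\hat{\bm{r}}_{n,i}|$ with the $i$-th coordinate of $\mathcal{K}_n(\bm{1})$ and then to trap that coordinate between two positive multiples of $\mu^n$ by sandwiching $\bm{1}$ against the eigenvector $\bm{x}_0$ supplied by Condition (\textbf{C1}). First I would observe that $|\bm{s}|=\bm{s}\cdot\bm{1}$ for any $\bm{s}\in\mbb{Z}_+^d$, and that every element of $J_{n,i}$ dominates componentwise some element of the lower boundary $\hat{J}_{n,i}$. Since $\bm{s}\mapsto\bm{s}\cdot\bm{1}$ is coordinate-monotone and nonnegative, its minimum over $J_{n,i}$ coincides with its minimum over $\hat{J}_{n,i}$, so
$$|\hat{\bm{r}}_{n,i}|\;=\;\min_{\bm{s}\in\hat{J}_{n,i}}\bm{s}\cdot\bm{1}\;=\;\mathcal{K}_n^{(i)}(\bm{1}).$$

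Next I would record two elementary properties of $\mathcal{K}_n$ that are immediate from its definition as a coordinate-wise minimum of nonnegative linear forms: monotonicity, i.e.\ $\bm{x}\le\bm{y}$ implies $\mathcal{K}_n(\bm{x})\le\mathcal{K}_n(\bm{y})$, and positive homogeneity of degree one, $\mathcal{K}_n(c\bm{x})=c\,\mathcal{K}_n(\bm{x})$ for $c>0$. Combining Condition (\textbf{C1}) with the recursion (\ref{e201}), a short induction on $n$ yields $\mathcal{K}_n(\bm{x}_0)=\mu^n\bm{x}_0$. Since $\bm{x}_0\gg\bm{0}$, there exist finite constants $0<\alpha_1\le\alpha_2<\infty$ with $\alpha_1\bm{1}\le\bm{x}_0\le\alpha_2\bm{1}$, and monotonicity plus homogeneity give
$$\alpha_1\,\mathcal{K}_n(\bm{1})\;\le\;\mathcal{K}_n(\bm{x}_0)\;=\;\mu^n\bm{x}_0\;\le\;\alpha_2\,\mathcal{K}_n(\bm{1}).$$
Reading the $i$-th coordinate and invoking the identity from the first step produces
$$\frac{x_0^{(i)}}{\alpha_2}\;\le\;\mu^{-n}|\hat{\bm{r}}_{n,i}|\;\le\;\frac{x_0^{(i)}}{\alpha_1},\qquad n\ge 1.$$
Shrinking the lower constant and enlarging the upper one if needed delivers the required form $0<\lambda_i<1<\lambda'_i$.

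I do not anticipate any genuine obstacle: the heart of the argument, the eigenvalue identity $\mathcal{K}_n(\bm{x}_0)=\mu^n\bm{x}_0$, is already packaged into Condition (\textbf{C1}) together with the iteration formula (\ref{e201}). The one mild subtlety is that $\mathcal{K}_n$ is a coordinate-wise minimum over a discrete set rather than a linear map, so monotonicity and homogeneity must be verified from the definition rather than inherited from matrix algebra; but both are immediate since each component $\mathcal{K}_n^{(i)}$ is a minimum of inner products against nonnegative vectors. A less elegant alternative would be to bound $|\hat{\bm{r}}_{n,i}|$ above by iterating the distinguished matrix $\bm{B}_0$ of Condition (\textbf{C1}) on $\bm{e}_i$ and below via a spectral-norm argument using $\mu=\min_{\bm{B}\in K_1}\|\bm{B}\|$; this route requires more bookkeeping but yields the same two-sided control.
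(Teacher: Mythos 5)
Your proposal is correct and follows essentially the same route as the paper: both rest on the eigenvalue identity $\mathcal{K}_n(\bm{x}_0)=\mu^n\bm{x}_0$ (from Condition (\textbf{C1}) and the recursion \reff{e201}) and then sandwich $|\hat{\bm{r}}_{n,i}|$ between multiples of $\mu^n x_0^{(i)}$ using the componentwise comparison $\min_k x_0^{(k)}\,\bm{1}\le\bm{x}_0\le\max_k x_0^{(k)}\,\bm{1}$. The only cosmetic difference is that you make explicit the identity $|\hat{\bm{r}}_{n,i}|=\mathcal{K}_n^{(i)}(\bm{1})$ and the monotonicity/homogeneity of $\mathcal{K}_n$, whereas the paper performs the same comparison directly on $\hat{\bm{r}}_{n,i}\cdot\bm{x}_0$ without naming those properties; the resulting constants $\lambda_i=x_0^{(i)}/\max_k x_0^{(k)}$ and $\lambda'_i=x_0^{(i)}/\min_k x_0^{(k)}$ coincide.
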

Actually, from condition (\textbf{C1}) and (\ref{e201}), $\mathcal{K}_n(\bm{x}_0)=\mu^n\bm{x}_0$ and thus $\min_{\bn{s}\in\hat{J}_{n,i}}\bm{s}\cdot\bm{x}_0=\mu^n x_0^{(i)}$ for $1\le i\le d$. Together with
	\[
	|\hat{\bm{r}}_{n,i}|\ge \left|\hat{\bm{r}}_{n,i}\cdot\frac{\bm{x}_0}{\max_k x_0^{(k)}}\right|\ge\frac{\min_{\bn{s}\in\hat{J}_{n,i}}\bm{s}\cdot\bm{x}_0}{\max_k x_0^{(k)}}\]
	and
	\[
	|\hat{\bm{r}}_{n,i}|\le \min_{\bn{s}\in\hat{J}_{n,i}}\bm{s}\cdot\frac{\bm{x}_0}{\min_k x_0^{(k)}}=\frac{\min_{\bn{s}\in\hat{J}_{n,i}}\bm{s}\cdot\bm{x}_0}{\min_k x_0^{(k)}}, \]
we get the desired proposition. \par
For $\bm{k}_n\in\mbb{Z}_+^d$, define \begin{equation}\label{delta}b_n=b(\bm{k}_n)=\min\{l\ge1 :c_l \mu^{n-l}\geq \lambda_{u}|\bm{k}_n|\},\end{equation}  where $\lambda_{u}=(\lambda_i\min_ku^{(k)})^{-1}$. Define $b'_n=b(k_n\bm{1})$ for $k_n\in \mbb{Z}_+$. Our main result in the B\"ottcher case is stated as follows: 
\begin{theorem}\label{thm2}
	Assume Condition (\textbf{C1}) holds. Let $\bm{k}_n\in\mbb{Z}_+^d$, $|\bm{k}_n|\ge|\hat{\bm{r}}_{n,i}|$, $|\bm{k}_n|=\mathrm{o}(c_n)$ and $\bm{k}_n\overset{\textbf{v}}{\to}\infty$ as $n\to\infty$. Then for $1\le i\le d$, 
	\begin{align}\label{ekl}
		-C_{i,1} &\leq \liminf_{n \to \infty} \mu^{b_n - n} \log \left[ c^d_n \,\mathbb{P}_i(\bm{Z}_n = \bm{k}_n) \right]\nonumber\\
		&\leq \limsup_{n \to \infty} \mu^{b_n - n} \log \left[ c^d_n\, \mathbb{P}_i(\bm{Z}_n = \bm{k}_n) \right] \leq -C_{i,2}.
	\end{align}
Assume $k_n\in\mbb{Z}_+$, $k_n\ge|\hat{\bm{r}}_{n,i}|$, $k_n=\mathrm{o}(c_n)$ and $k_n\to\infty$ as $n\to\infty$. Then for $1\le i\le d$,	
\begin{align}\label{ek2}
	-C_{i,3} &\leq \liminf_{n \to \infty} \mu^{b'_n - n} \log \left[  \mathbb{P}_i(|\bm{Z}_n| \le k_n) \right]\nonumber\\
	&\leq \limsup_{n \to \infty} \mu^{b'_n - n} \log \left[  \mathbb{P}_i(|\bm{Z}_n| \le k_n) \right] \leq -C_{i,4}.
\end{align}
\end{theorem}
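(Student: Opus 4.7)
The plan is to adapt the Fleischmann--Wachtel strategy for the single-type B\"ottcher case via a Markov decomposition at the critical cutoff $n-b_n$, which is tuned precisely so that the event $\{\bm{Z}_n = \bm{k}_n\}$ essentially forces $|\bm{Z}_{n-b_n}|$ to be pinned near its B\"ottcher minimum $\mu^{n-b_n}$. I would start from
\begin{equation*}
\mathbb{P}_i(\bm{Z}_n = \bm{k}_n) = \sum_{\bm{l}\in\mbb{Z}_+^d} \mathbb{P}_i(\bm{Z}_{n-b_n} = \bm{l})\,\mathbb{P}(\bm{Z}_{b_n} = \bm{k}_n \mid \bm{Z}_0 = \bm{l}),
\end{equation*}
and use Proposition~\ref{auxlem8} to restrict the sum to $\bm{l}$ with $|\bm{l}|\lesssim |\bm{k}_n|/\mu^{b_n}\asymp \mu^{n-b_n}$, since a larger $\bm{l}$ would deterministically overshoot $|\bm{k}_n|$. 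The defining relation $c_{b_n}\mu^{n-b_n}\asymp |\bm{k}_n|$ is exactly what matches the two factors at the Schr\"oder/B\"ottcher interface.

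For the short-time factor I would apply Theorem~\ref{thm1} (or reuse its proof from Section~3) starting from $\bm{Z}_0 = \bm{l}$, obtaining
$\mathbb{P}(\bm{Z}_{b_n} = \bm{k}_n \mid \bm{Z}_0 = \bm{l}) \asymp c_{b_n}^{-d}\,\bm{w}^{\ast \bm{l}}(|\bm{k}_n|/c_{b_n})$ uniformly in admissible $\bm{l}$; the hypothesis $\bm{k}_n\overset{\bm{v}}{\to}\infty$ together with $|\bm{k}_n|/c_{b_n}\asymp \mu^{n-b_n}$ keeps the density argument in a compact positive set, so this factor is of order $c_{b_n}^{-d}$ up to bounded constants. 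For the long-time factor at the extreme scale $|\bm{l}|\asymp \mu^{n-b_n}$, the upper bound $\mathbb{P}_i(\bm{Z}_{n-b_n} = \bm{l})\le \exp(-C\mu^{n-b_n})$ comes from the multivariate B\"ottcher limit (\ref{e101}) under Condition (\textbf{C1}) combined with a Markov inequality applied to $\bm{f}_{n-b_n}(\bm{s})$ at a suitable $\bm{s}\in(0,1)^d$; the lower bound is obtained constructively by concatenating $n-b_n$ minimal-offspring transitions (cf.\ Proposition~\ref{auxlem8}) so that $\bm{Z}_{n-b_n}$ can hit a specific $\bm{l}$ with $|\bm{l}|\asymp\mu^{n-b_n}$, each step contributing a bounded-below fraction of the minimal offspring probability and accumulating to $\exp(-C'\mu^{n-b_n})$.

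Multiplying the two factors, summing over the at most polynomial-in-$\mu^{n-b_n}$ number of admissible $\bm{l}$'s, and then multiplying by $c_n^d\asymp c_{b_n}^d\rho^{d(n-b_n)}$, the exponential $\exp(-C\mu^{n-b_n})$ dominates any factor polynomial in $\rho^{n-b_n}$; taking $\mu^{b_n-n}\log(\cdot)$ yields (\ref{ekl}). The integrated bound (\ref{ek2}) is obtained by the same decomposition, with the point event $\{\bm{Z}_{b'_n} = \bm{k}\}$ replaced by $\{|\bm{Z}_{b'_n}|\le k_n\}$ and the local LLT in the inner factor replaced by the integrated statement (\ref{teq2}) of Theorem~\ref{thm1}.

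The main obstacle I anticipate is the uniformity in $\bm{l}$ of the multi-type local limit theorem used in the short-time step: Theorem~\ref{thm1} must be verified to hold with error terms uniform over $\bm{l}$ in the $\mu^{n-b_n}$-window and over $\bm{k}_n$ in a $\bm{v}$-cone. In the multi-type setting this requires a careful multidimensional Cram\'er tilt, since the convolved densities $\bm{w}^{\ast\bm{l}}$ depend on $\bm{l}$ through the composition of the marginal densities $w_j$, and securing this uniformity is substantially more delicate than the scalar estimate used by Fleischmann and Wachtel.
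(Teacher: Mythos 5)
Your Markov decomposition at $n-b_n$ and the treatment of the ``long-time'' factor $\mbb{P}_i(\bm{Z}_{n-b_n}=\bm{l})$ (B\"ottcher limit \eqref{e101} for the upper bound, concatenation of minimal-offspring transitions for the lower bound) agree with the paper's proof. But the short-time factor $\mbb{P}(\bm{Z}_{b_n}=\bm{k}_n\mid \bm{Z}_0=\bm{l})$ is handled by a tool that does not apply here, and this is a genuine gap.

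You propose to use Theorem~\ref{thm1} (or its proof) to get $\mbb{P}(\bm{Z}_{b_n}=\bm{k}_n\mid\bm{Z}_0=\bm{l})\asymp c_{b_n}^{-d}\bm{w}^{\ast\bm{l}}(|\bm{k}_n|/c_{b_n})$ uniformly in admissible $\bm{l}$. This fails on three counts. First, Theorem~\ref{thm1} is a Schr\"oder-case result: its proof hinges on the Schr\"oder constant $\gamma$ through \eqref{en1}, Lemma~\ref{lem3} (which requires $\gamma^{|\bm{l}|}\rho^d<1$), Lemma~\ref{auxlem5}, and Lemma~\ref{auxlem6}; none of this machinery is available in the B\"ottcher case, where $\gamma$ is not defined. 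Second, even if one had a B\"ottcher analogue of Lemma~\ref{auxlem5}, that lemma is formulated for \emph{fixed} $\bm{l}$ and \emph{bounded} argument $|\bm{x}_n|$, whereas here $|\bm{l}|\asymp\mu^{n-b_n}\to\infty$ and $|\bm{k}_n|/c_{b_n}\asymp\mu^{n-b_n}\to\infty$; there is no ``compact positive set'' keeping the density bounded away from $0$ and $\infty$. Third, the claimed order $c_{b_n}^{-d}$ is wrong: the conditional probability in the relevant regime is of order $|\bm{l}|^{-d/2}c_{b_n}^{-d}$ from the local CLT \emph{and} carries an additional exponential factor of order $\exp(-C\mu^{n-b_n})$ from the Cram\'er tilting, since in \eqref{shn1} the prefactor $\left(\bm{f}_{b_n}(\mathrm{e}^{-\bm{h}_n/c_{b_n}})\right)^{\hat{\bm{r}}_{n-b_n,i}}$ is of size $\theta^{\mu^{n-b_n}}$ with $\theta<1$ (cf.\ \eqref{ekl2}). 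So the exponential decay encoded in $C_{i,1}$ does not come only from the long-time factor; it also comes from the short-time conditional probability, which your accounting omits.

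The tool the paper develops in place of the Schr\"oder LLT is a vector-type local central limit theorem for the tilted sums $\bm{S}_{\bm{l}}(\bm{h},n)$ valid as $|\bm{l}|\to\infty$ (Lemma~\ref{auxlem11}), built on the Esseen-type normal approximation bound (Lemma~\ref{auxlem7}). This is combined with Lemma~\ref{auxlem12}, which constructs a bounded tilt sequence $\bm{h}_n$ so that $\mbb{E}\bm{S}_{\hat{\bm{r}}_{n-b_n,i}}(\bm{h}_n,b_n)$ exactly matches $\bm{k}_n$ (this is where the hypothesis $\bm{k}_n\overset{\bm{v}}{\to}\infty$ and the window condition defining $b_n$ are used). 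For the lower bound the paper then restricts the sum to the single summand $\bm{l}=\hat{\bm{r}}_{n-b_n,i}$; there is no need to sum over all admissible $\bm{l}$ as in your sketch, and indeed doing so uniformly would be more delicate than necessary. You would need to replace your short-time step with such a normal-approximation local CLT plus matching-tilt argument to make the proof go through.
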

Again, we give the estimate for lower deviation probability for each type of $\{\bm{Z}_n;n\ge0\}$ in the B\"ottcher case. By an abuse of notation, let $b_n=\min\{l\geq1:c_l \mu^{n-l}\geq \lambda_{u}k_n\}$.
\begin{theorem}\label{thm2.1}
	Assume Condition (\textbf{C1}) holds. Let $k_n\in\mbb{Z}_+$, $k_n\ge\min_{\bn{s}\in J_{n,i}}s^{(m)}$, $k_n=\mathrm{o}(c_n)$ and $k_n\to\infty$ as $n\to\infty$. Then for $1\le i,m\le d$, 
	\begin{align*}
		-C_{i,m,1} &\leq \liminf_{n \to \infty} \mu^{b_n - n} \log \left[ c_n \mathbb{P}_i(Z^{(m)}_n = k_n) \right]\nonumber\\
		&\leq \limsup_{n \to \infty} \mu^{b_n - n} \log \left[ c_n \mathbb{P}_i(Z^{(m)}_n = k_n) \right] \leq -C_{i,m,2},
	\end{align*}
	The above inequality remains true if $c_n\, \mathbb{P}_i(Z^{(m)}_n =k_n)$ is replaced by $ \mathbb{P}_i(Z^{(m)}_n \le k_n)$.
\end{theorem}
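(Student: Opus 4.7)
The strategy is to adapt the Markov-decomposition approach used in the proof of Theorem~\ref{thm2} to the scalar marginal $Z_n^{(m)}$. Conditioning on the intermediate generation $b_n$, I would write
\[
\mbb{P}_i\!\bigl(Z_n^{(m)} = k_n\bigr) = \sum_{\bn{l}\in\mbb{Z}_+^d}\mbb{P}_i(\bm{Z}_{b_n}=\bm{l})\,\mbb{P}_{\bn{l}}\!\bigl(Z_{n-b_n}^{(m)} = k_n\bigr),
\]
and analogously for $\mbb{P}_i(Z_n^{(m)}\le k_n)$. By the defining property of $b_n$, the product $c_{b_n}\mu^{n-b_n}$ is of order $k_n$, and I expect the sum to concentrate on $\bm{l}$ lying in a $\bm{v}$-neighborhood of $c_{b_n}\bm{v}$, exactly as in the proof of Theorem~\ref{thm2}.

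For the upper bound, contributions from atypical $\bm{l}$ are controlled by two complementary estimates. When $|\bm{l}|\ll c_{b_n}$, the lower-tail B\"ottcher bound on $\bm{Z}_{b_n}$ (a consequence of~\reff{e101}) makes $\mbb{P}_i(\bm{Z}_{b_n}=\bm{l})$ super-exponentially small; when $|\bm{l}|\gg c_{b_n}$ or $\bm{l}/|\bm{l}|$ is far from $\bm{v}$, a Chernoff argument with the test vector $\bm{s}=(1,\dots,s,\dots,1)$ ($s\in(0,1)$ in the $m$-th slot) yields
\[
\mbb{P}_{\bn{l}}\!\bigl(Z_{n-b_n}^{(m)}\le k_n\bigr) \le s^{-k_n}\prod_{j}f_{n-b_n}^{(j)}(\bm{s})^{l^{(j)}} \le \exp\!\bigl(k_n\log(1/s) - c\,\mu^{n-b_n}\,\bm{l}\cdot\bm{G}(\bm{s})\bigr),
\]
where the second inequality uses~\reff{e101}; this is negligible once $\mu^{n-b_n}\bm{l}\cdot\bm{G}(\bm{s})$ exceeds $k_n\log(1/s)$ by a suitable constant. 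On the typical range, the multi-type local CLT used in Theorem~\ref{thm2} gives $\mbb{P}_i(\bm{Z}_{b_n}=\bm{l})\le Cc_{b_n}^{-d}$, and the single-coordinate analog of the local B\"ottcher estimate gives $\mbb{P}_{\bn{l}}(Z_{n-b_n}^{(m)}=k_n)\le Cc_n^{-1}\exp(-C'\mu^{n-b_n})$ uniformly. Summing over the $\asymp c_{b_n}^d$ typical $\bm{l}$ cancels the $c_{b_n}^{-d}$ and yields $c_n\,\mbb{P}_i(Z_n^{(m)}=k_n)\le \exp(-C_{i,m,2}\,\mu^{n-b_n})$. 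The matching lower bound is obtained by retaining contributions from a neighborhood of a chosen $\bm{l}_0\approx c_{b_n}\bm{v}$ and using the lower halves of the same two estimates. The version for $\{Z_n^{(m)}\le k_n\}$ follows from the identical decomposition, with the factor $c_n^{-1}$ replaced by $1$ since no concentration on a single value of the marginal is imposed.

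The main obstacle is the single-coordinate local B\"ottcher estimate $\mbb{P}_{\bn{l}}(Z_{n-b_n}^{(m)}=k_n)\asymp c_n^{-1}\exp(-C\mu^{n-b_n})$ for $\bm{l}$ of order $c_{b_n}$. In Theorem~\ref{thm2} the corresponding estimate carried a factor $c_n^{-d}$ arising from a $d$-dimensional Fourier inversion on $\bm{Z}_{n-b_n}$; the marginalized version needed here should follow from the same characteristic-function computation after integrating out $d-1$ of the Fourier variables, using that on the B\"ottcher scale the joint density of $\bm{Z}_{n-b_n}/c_n$ (started from $\bm{l}$) is of order $c_n^{-d}\exp(-C\mu^{n-b_n})$ within a window of volume $\asymp c_n^d$ around $\mu^{n-b_n}(\bm{l}\cdot\bm{u})\bm{v}$, so that its $m$-th marginal has density $\asymp c_n^{-1}\exp(-C\mu^{n-b_n})$ over an interval of length $\asymp c_n$. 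Verifying both the pointwise upper bound and the matching pointwise lower bound uniformly in the $\bm{v}$-neighborhood of $\bm{l}_0$ will be the most delicate technical step.
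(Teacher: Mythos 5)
Your Markov decomposition is reversed relative to the one the paper uses, and this reversal is fatal to the argument. In the paper's proof of Theorem~\ref{thm2} (which Theorem~\ref{thm2.1} is stated to follow), the conditioning is on generation $n-b_n$:
\[
\mbb{P}_i(Z_n^{(m)}=k_n)=\sum_{\bn{l}\in J_{n-b_n,i}}\mbb{P}_i(\bm{Z}_{n-b_n}=\bm{l})\,\mbb{P}(Z_{b_n}^{(m)}=k_n\,|\,\bm{Z}_0=\bm{l}),
\]
so that the entire deviation cost is carried by the first factor: the dominant $\bm{l}$ is the \emph{minimal} accessible vector $\hat{\bm{r}}_{n-b_n,i}\asymp\mu^{n-b_n}$, and by \eqref{key4} its probability is $\exp(-C\mu^{n-b_n})$. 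The time $b_n$ is precisely chosen so that $c_{b_n}\mu^{n-b_n}\asymp k_n$, which makes the second factor a \emph{local CLT} event (not a deviation event): $Z_{b_n}^{(m)}$ started from $\hat{\bm{r}}_{n-b_n,i}$ has typical size $|\hat{\bm{r}}_{n-b_n,i}|\,c_{b_n}\,v^{(m)}W\asymp k_n W$, so hitting $k_n$ costs only polynomial factors. For Theorem~\ref{thm2.1} the only change is that one Cram\'er-transforms the scalar marginal $Z_{b_n}^{(m)}$ via the one-dimensional $X_{i,m}(h,n)$ and $S_{\bn{l},m}(h,n)$ introduced in Subsection~\ref{sec4}, and then \cite[Lemmas 14, 15]{lower} (the one-dimensional Esseen/local-CLT inputs) substitute for Lemmas~\ref{auxlem7} and~\ref{auxlem11} — no marginal-integration gymnastics are needed.

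Your decomposition at generation $b_n$ with claimed concentration on $\bm{l}\approx c_{b_n}\bm{v}$ produces the wrong exponent. For $|\bm{l}|\asymp c_{b_n}$, the minimal accessible value of $Z_{n-b_n}^{(m)}$ under $\mbb{P}_{\bn{l}}$ is $\asymp|\bm{l}|\mu^{n-b_n}\asymp k_n$, and achieving a value at that scale requires essentially all $|\bm{l}|$ independent lineages to be simultaneously near-minimal over $n-b_n$ generations, each at cost $\exp(-c\mu^{n-b_n})$. This gives $\mbb{P}_{\bn{l}}(Z_{n-b_n}^{(m)}=k_n)\lesssim\exp(-c\,|\bm{l}|\,\mu^{n-b_n})\asymp\exp(-c\,k_n)$, not the $c_n^{-1}\exp(-C\mu^{n-b_n})$ you assert; your own Chernoff computation with the test point $\bm{s}$ actually yields exactly this $\exp(-c\,|\bm{l}|\,\mu^{n-b_n})$ once you keep track of the $\bm{l}$-dependence and insert $k_n\asymp c_{b_n}\mu^{n-b_n}$. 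Since $k_n\gg\mu^{n-b_n}$ (indeed $k_n/\mu^{n-b_n}\asymp c_{b_n}\to\infty$), the contribution from $\bm{l}\asymp c_{b_n}\bm{v}$ is negligible compared to the theorem's target rate. The dominant contribution in your decomposition would instead come from atypically small $\bm{l}$ near $\hat{\bm{r}}_{b_n,i}$, but then the first factor already costs $\exp(-C\mu^{b_n})$ (the wrong exponent), and the second factor is no longer a plain local-CLT event since $k_n/(\mu^{b_n}c_{n-b_n})\asymp(\rho/\mu)^{2b_n-n}$ is not bounded. To make the argument work you need to flip the split, and the ``single-coordinate local B\"ottcher estimate'' you flag as the hard step then never arises — it is replaced by a one-dimensional local limit theorem applied to $S_{\hat{\bn{r}}_{n-b_n,i},m}(h,b_n)$, which is exactly what the paper invokes.
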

\section{Proofs in the Schr\"{o}der case}
The proofs of Theorems \ref{thm1} and \ref{thm2} are rooted in a multivariate adaptation of the classical Cramér method which is a well-known technique in large deviation theory originally designed for sums of independent random variables. Our approach extends this framework to multi-type branching processes, following the methodology introduced in \cite{lower}.  
\subsection{Cram\'er transformation for multi-type GW processes}
We begin by recalling the Cramér transform for scalar random variables and extend its definition to the multivariate setting. \par
 For a real-valued random variable $X$ with finite exponential moments  ($\mathbb{E}\mathrm{e}^{sX}<\infty$), its Cram\'er transformation $X(s)$ defined via the characteristic function: 
\[
\mathbb{E}\mathrm{e}^{\textup{i}tX(s)}=\frac{\mathbb{E}\mathrm{e}^{(s+\textup{i}t)X}}{\mathbb{E}\mathrm{e}^{sX}},~t\in\mbb{R}.\] 
For the $d$-dimensional GW process $\{\bm{Z}_n;n\ge0\}$ and any $\bm{h}\geq \bm{0}$, define its Cram\'er transform $\bm{X}_i(\bm{h},n),~1\le i \le d$ with density as
\begin{equation}\label{shn2}
\mbb{P}(\bm{X}_i(\bm{h},n)=\bm{k})=\frac{\mathrm{e}^{-\bn{h}\cdot\bn{k}/c_n}}{f_n^{(i)}(\mathrm{e}^{-\bn{h}/c_n})}\mbb{P}_i(\bm{Z}_n=\bm{k}).\end{equation}
For any $\bm{t}\geq \bm{0}$, its characteristic function is computed as: 
\begin{align}\label{ekm}
\mbb{E}(\mathrm{e}^{\textup{i}\bn{t}\cdot\bn{X}_i(\bn{h},n)})&=	\sum_{\bn{k}\geq\textbf{0}}\mbb{P}(\bm{X}_i(\bm{h},n)=\bm{k})\mathrm{e}^{\textup{i}\bn{t}\cdot\bn{k}}\nonumber\\
&=\sum_{\bn{k}\geq\textbf{0}}\frac{\mathrm{e}^{\textup{i}\bn{t}\cdot\bn{k}-\bn{h}\cdot\bn{k}/c_n}}{f_n^{(i)}(\mathrm{e}^{-\bn{h}/c_n})}\mbb{P}_i(\bm{Z}_n=\bm{k})
=\frac{f^{(i)}_n(\mathrm{e}^{\textup{i}\bn{t}-\bn{h}/c_n})}{f^{(i)}_n(\mathrm{e}^{-\bn{h}/c_n})} .
\end{align}
This demonstrates that  $\bm{X}_i(\bm{h},n)$ is the $d$-dimensional Cram\'er transformation of $\bm{Z}_n$, with parameter $-\bm{h}/c_n$, under the initial condition $\bm{Z}_0=\bm{e}_i$. 

Let $\{\bm{X}_{i,j}(\bm{h},n);j\geq1\}$ be i.i.d. copies of $\bm{X}_i(\bm{h},n)$.  Define
\[\bm{S}_{\bn{l}}(\bm{h},n)=\sum_{i=1}^d\sum_{j=1}^{l^{(i)}}\bm{X}_{i,j}(\bm{h},n),~~\bm{l}\in\mbb{Z}_+^d.\]
One can verify that
\begin{align}\label{eshn}
	\mbb{E}\exp\{\textup{i}\bm{t}\cdot\bm{S}_{\bn{l}}(\bm{h},n)\}=\frac{\left(\bm{f}_n(\mathrm{e}^{\textup{i}\bn{t}-\bn{h}/c_n})\right)^{\bn{l}}}{\left(\bm{f}_n(\mathrm{e}^{-\bn{h}/c_n})\right)^{\bn{l}}}
\end{align}
and
\begin{eqnarray}\label{shn1}
	\mbb{P}(\bm{Z}_n=\bm{k}|\bm{Z}_0=\bn{l})=\mathrm{e}^{\bn{h}\cdot\bn{k}/c_n}\left(\bm{f}_n(\mathrm{e}^{-\bn{h}/c_n})\right)^{\bn{l}}\mbb{P}(\bm{S}_{\bn{l}}(\bm{h},n)=\bm{k}).
\end{eqnarray}

\subsection{Basic estimates}
To estimate $\mbb{E}(\mathrm{e}^{\textup{i}\bn{t}\cdot\bn{X}_i(\bn{h},n)})$ via equation (\ref{ekm}), we analyze the asymptotic behavior of $f^{(i)}_n(\mathrm{e}^{\textup{i}\bn{t}-\bn{h}/c_n})$ across two distinct parameter regimes:
 \begin{align*}
 &J_{\epsilon}:=\{\bm{s}\in\mbb{R}^d:\vt\bm{s}\vt\in[\epsilon\pi ,\pi]\},~\epsilon\in(0,1)~~\text{and}\\
 &J_j:=\{\bm{t}\in\mbb{R}^d: \pi/c_j \le \vt\bm{t}\vt\le \pi/c_{j-1} \},~~j\geq1.
 \end{align*}
  \begin{lemma}[Estimate on $J_{\epsilon}$]\label{lem1}
  	For any $1\le i \le d$, there exists $\theta_i=\theta_i(\epsilon)\in(0,1)$ such that
  	\[f^{(i)}_k\left(\mathrm{e}^{-\textbf{h}/c_k+\mathrm{i} \textbf{t}/c_k}\right)\le \theta_i,~~k\geq0,~\bm{h}\geq\bm{0},~\bm{t}\in J_{\epsilon}.\]
  \end{lemma}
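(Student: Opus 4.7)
The plan is to combine the submultiplicativity $|f^{(i)}(\bm{s})| \le f^{(i)}(|\bm{s}|)$ (valid on the closed unit polydisc, with $|\bm{s}|$ taken componentwise) with the quantitative non-degeneracy of the offspring distribution ensured by positive regularity and nonsingularity, and then iterate through the composition $f^{(i)}_{k+1} = f^{(i)}\circ\bm{f}_k$. The argument $\bm{s}_k := \mathrm{e}^{-\bm{h}/c_k + \mathrm{i}\bm{t}/c_k}$ satisfies $|s_{k,j}| = \mathrm{e}^{-h_j/c_k} \le 1$, so all iterates $\bm{f}_k(\bm{s}_k)$ remain in the closed unit polydisc and the naive bound $|f^{(i)}_k(\bm{s}_k)| \le f^{(i)}_k(|\bm{s}_k|) \le 1$ is available but not sharp enough.

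To extract a strict gap one must exploit the non-real part of $\bm{s}_k$. Expanding
\[
\bigl|f^{(i)}(\bm{s})\bigr|^{2} = f^{(i)}(|\bm{s}|)^{2} - \sum_{\bm{k}\ne\bm{k}'} p^{(i)}_{\bm{k}} p^{(i)}_{\bm{k}'} |\bm{s}|^{\bm{k}+\bm{k}'}\bigl(1-\cos((\bm{k}-\bm{k}')\cdot\arg\bm{s})\bigr)
\]
produces a strict gap whenever $\arg\bm{s}$ is not a lattice point of the offspring law, and by nonsingularity combined with compactness of the $\bm{t}$-domain this gap is bounded below by some $\delta_{i}(\epsilon)>0$ over the relevant region. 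The key use of $\bm{t}\in J_{\epsilon}$ is precisely that $\bm{t}$ is uniformly bounded away from $\bm{0}$, which is what prevents $\arg\bm{s}$ from approaching the trivial lattice point.

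For arbitrary $k\ge 0$ I would split into two regimes. In the bounded-$k$ regime, I would rely on continuity of $(\bm{h},\bm{t})\mapsto f^{(i)}_k(\bm{s}_k)$: on the compact set $\{\bm{h}\in[0,M]^{d},\,\bm{t}\in J_{\epsilon}\}$ the bound follows by compactness together with the pointwise strict inequality above, while for $|\bm{h}|>M$ some component of $|\bm{s}_k|$ is forced near $0$, driving $f^{(i)}_k(\bm{s}_k)$ toward $f^{(i)}_k(\bm{0})=0$. For $k$ large I would invoke the almost-sure convergence $\bm{Z}_k/c_k\to W\bm{v}$ from (\ref{cn}) together with the continuous positive density of $W$ on $(0,\infty)$ to identify the limit of $f^{(i)}_k(\bm{s}_k)$ as the Laplace--Fourier transform $\mbb{E}_i\exp(-W(\bm{h}\cdot\bm{v})+\mathrm{i}W(\bm{t}\cdot\bm{v}))$, whose modulus is strictly less than one whenever $\bm{t}\cdot\bm{v}\ne 0$ by Riemann--Lebesgue applied to the continuous density of $W$.

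The main obstacle is the interaction between the $k$-dependent scaling and the uniform control. As $k\to\infty$ with $\bm{h},\bm{t}$ fixed, $\arg\bm{s}_k=\bm{t}/c_k\to\bm{0}$, so the one-step gap from the cosine-expansion degenerates; this is exactly what forces the rescaling viewpoint and the appeal to the limit law $W$. A subtler difficulty is the case $\bm{t}\cdot\bm{v}=0$, where the leading-order random variable in the exponent is identically zero and the Laplace--Fourier limit becomes $\mbb{E}_i\mathrm{e}^{-W(\bm{h}\cdot\bm{v})}\le 1$ with equality at $\bm{h}=\bm{0}$. I would resolve this by recentering: projecting onto the orthogonal complement of $\bm{v}$, the residual $\bm{Z}_k-c_kW\bm{v}$ has non-degenerate fluctuations of order $\sqrt{c_k}$ (the multi-type CLT), so $\mathrm{i}\bm{t}\cdot(\bm{Z}_k-c_kW\bm{v})/c_k$ still has a deterministic but small non-real contribution that, after reabsorbing the scaling into $\bm{t}$, delivers the needed gap; alternatively one can bypass the limit and invoke the nondegeneracy of $\bm{Z}_1$ together with irreducibility of $\bm{M}$ directly to exclude that $\bm{t}\cdot\bm{Z}_k$ is supported on a sparser sublattice for all $k$.
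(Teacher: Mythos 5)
Your strategy coincides with the paper's in all three regimes: for $k$ large, pass to the almost-sure limit $\bm{Z}_k/c_k\to W\bm{v}$ (the paper packages this as uniform convergence of $\mathbb{E}_i\,g_{\bm{h},\bm{t}}(\bm{Z}_k/c_k)$ over the equicontinuous family $g_{\bm{h},\bm{t}}(\bm{x})=\mathrm{e}^{(-\bm{h}+\mathrm{i}\bm{t})\cdot\bm{x}}$) and estimate the resulting Laplace--Fourier transform of $W$; for the finitely many small $k$, use strict pointwise inequality plus compactness (the paper invokes aperiodicity); for $\Vert\bm{h}\Vert$ large, reduce to a fixed $\bm{h}_0$ by monotonicity. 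The one-step cosine expansion of $|f^{(i)}(\bm{s})|^{2}$ is a correct identity, but as you yourself note its gap degenerates when $\arg\bm{s}_k=\bm{t}/c_k\to\bm{0}$, and in the end you do not rely on it; the paper skips it entirely.

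Where you diverge is in raising the case $\bm{h}=\bm{0}$, $\bm{t}\cdot\bm{v}=0$, and this is a sharp observation. For $d\ge2$ the shell $J_{\epsilon}$ meets the hyperplane $\{\bm{t}:\bm{t}\cdot\bm{v}=0\}$, where the limiting quantity $\mathbb{E}_i\,\mathrm{e}^{\mathrm{i}(\bm{t}\cdot\bm{v})W}$ equals $1$; the paper's assertion that $\sup_{\Vert\bm{h}\Vert\le H,\,\bm{t}\in J_{\epsilon}}\sup_i\left|\mathbb{E}_i\,g_{\bm{h},\bm{t}}(\bm{v}W)\right|<1$ is therefore not actually justified at that point, and your concern is legitimate. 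However, neither of your two proposed repairs closes the gap. The CLT--recentering produces a phase $\bm{t}\cdot(\bm{Z}_k-c_kW\bm{v})/c_k$ that tends to $0$, so the associated deficit from $1$ also tends to $0$ and no uniform-in-$k$ margin emerges. The aperiodicity/sublattice argument yields $\left|f_k^{(i)}(\mathrm{e}^{\mathrm{i}\bm{t}/c_k})\right|<1$ for each fixed $k$, but the same limit computation shows $\sup_{\bm{t}\in J_{\epsilon}}\left|f_k^{(i)}(\mathrm{e}^{\mathrm{i}\bm{t}/c_k})\right|\to1$ as $k\to\infty$; per-$k$ strictness does not deliver the claimed $\theta_i<1$ uniformly in $k$. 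As written both your argument and the paper's leave the uniform bound unestablished near the hyperplane $\bm{t}\cdot\bm{v}=0$; a complete argument would need either to restrict the admissible set of $\bm{t}$ away from that hyperplane, or to replace the pointwise bound by an integrated one, noting that what is actually used downstream in Lemma~\ref{lem3} is only the integral of $|\bm{f}_n|$ over the shells $J_j$.
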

  \begin{proof}
  To analyze the characteristic functions, we define the complex exponential function: $$g_{\bn{h},\bn{t}}(\bm{x}):=\exp\{(-\bm{h}+\ii\bm{t})\cdot\bm{x}\},\bm{h},~\bm{x}\geq\bm{0},~ \bm{t}\in\mathbb{R}^d,$$ and establish its Lipschitz continuity. For any $\bm{x},\bm{y}\ge\bn{0}$,
  	\begin{align*}
  		\left\Vert g_{\bn{h}, \bn{t}}(\bn{x})-g_{\bn{h}, \bn{t}}(\bn{y})\right\Vert & =\left\Vert\mathrm{e}^{-\bn{h}\cdot\bn{x}}\left(\mathrm{e}^{\mathrm{i} \bn{t}\cdot\bn{x}}-\mathrm{e}^{\mathrm{i} \bn{t}\cdot\bn{y}}\right)+\mathrm{e}^{\mathrm{i} \bn{t}\cdot\bn{y}}\left(\mathrm{e}^{-\bn{h}\cdot\bn{x}}-\mathrm{e}^{-\bn{h}\cdot\bn{y}}\right)\right\Vert \\
  		& \leqslant\left\Vert\mathrm{e}^{\mathrm{i} \bn{t}\cdot\bn{x} }-\mathrm{e}^{\mathrm{i} \bn{t}\cdot \bn{y}}\right\Vert+\left\Vert\mathrm{e}^{-\bn{h}\cdot \bn{x}}-\mathrm{e}^{-\bn{h}\cdot \bn{y}}\right\Vert\\
  		& \leqslant(\Vert\bn{h}\Vert+\Vert\bn{t}\Vert)\Vert\bn{x}-\bn{y}\Vert,
  	\end{align*}
  	where the final inequality follows from the mean value theorem applied to the real and imaginary parts. This implies for any fixed $H<\infty$ and $T\geq d\pi$, the function family $$\mathcal{G}:=\{g_{\bn{h}, \bn{t}}(\cdot), \vt\bn{h}\vt\le H, \vt\bm{t}\vt\le T\}$$ 
  	is uniformly bounded and equi-continuous functions on $\mathbb{R}_{+}^d$. 	Therefore, by global limit theorem, for each $i$, 
  	\[
  	f_k^{(i)}\left(\mathrm{e}^{-\bn{h}/c_k+\mathrm{i} \bn{t}/c_k}\right)=\mbb{E}_ig_{\bn{h}, \bn{t}}(\bm{Z}_k/c_k)\rightarrow \mbb{E}_ig_{\bn{h}, \bn{t}}(\bm{v}W),~~\text{as} ~k\to\infty, \]
  	uniformly over $\mathcal{G}$. Since $W>0$ has an absolutely continuous distribution, it holds that 
  	\[
  	\sup_{0\le\vt\bn{h}\vt\le H, \bn{t}\in J_{\epsilon}}\sup_{1\le i\le d}\mbb{E}_ig_{\bn{h}, \bn{t}}(\bm{v}W)<1.\]
  Consequently, there exists $k_0$ and $\delta_1=\delta_1(k_0)\in(0,1)$,  
  	\[
  \sup_{0\le \vt\bn{h}\vt\le H, \bn{t}\in J_{\epsilon}}\sup_{1\le i\le d}f_k^{(i)}\left(\mathrm{e}^{-\bn{h}/c_k+\mathrm{i} \bn{t}/c_k}\right)\leq \delta_1,~k\geq k_0.	\]
For $k<k_0$, under Condition \textbf{(C1)} and the irreducibility of $\bm{M}$, the process is aperiodic and hence  
  \[
  \sup_{0\le \vt\bn{h}\vt\le H, \bn{t}\in J_{\epsilon}}\sup_{1\le i\le d}f_k^{(i)}\left(\mathrm{e}^{-\bn{h}/c_k+\mathrm{i} \bn{t}/c_k}\right)<1,~~k\le k_0.\]
  Combining the above two upper bounded yields the case $\vt\bn{h}\vt\le H$ in this lemma.\par 
 When $\vt\bn{h}\vt\ge H$, observing that for any $i,k$, we may choose $\bm{h}_0>\bm{0}$ with $\vt\bm{h}_0\vt\le H$ such that

  \[
 \left |f_k^{(i)}(\mathrm{e}^{-\bn{h}/c_k+\mathrm{i} \bn{t}/c_k})\right|\leq f_k^{(i)}(\mathrm{e}^{-\bn{h}/c_k})\le f_k^{(i)}(\mathrm{e}^{-\bn{h}_0/c_k}).
  \]
  As $k\to\infty$, the right-hand side converges to $\mbb{E}_i\mathrm{e}^{-W\bn{h}_0\cdot\bn{v}}$  which is strictly less than $1$ because $\bm{h}_0>\bm{0}$. Applying the same argument for $k< k_0$ and $k\ge k_0$ completes the proof.
  \end{proof}
 
  \begin{lem}[Estimate on $J_j$]\label{lem2}
  For any $1\le i \le d$, there exist constants $A_i>0$ and $A'_i\in(0,1)$ such that for $\bm{h}\geq0,\bm{t}\in J_{j}$, and $1\leq j\le n$,
  	\begin{eqnarray*}
  	\left|f^{(i)}_n\left(\mathrm{e}^{-\textbf{h}/c_n+\mathrm{i} \textbf{t}}\right)\right| \leqslant \begin{cases}A_i \gamma^{n-j+1} & \text { in the Schr\"{o}der case,} \\ (A'_i)^{\mu^{n-j+1}} & \text { in the B\"{o}ttcher case.}\end{cases}
  	\end{eqnarray*}
  \end{lem}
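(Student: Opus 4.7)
My plan is to exploit the semigroup identity $\bm{f}_n = \bm{f}_{n-j+1}\circ\bm{f}_{j-1}$ together with Lemma~\ref{lem1}: a rescaling of $\bm{t}$ will put the inner block in the regime covered by Lemma~\ref{lem1}, forcing each coordinate of $\bm{f}_{j-1}(\mathrm{e}^{-\bm{h}/c_n+\mathrm{i}\bm{t}})$ to have modulus at most some $\theta<1$. Once this contraction is in place, the outer block $\bm{f}_{n-j+1}$ evaluated on an argument inside the closed polydisc of radius $\theta$ will decay at the Schr\"oder rate $\gamma^{n-j+1}$ via (\ref{en1}), or the B\"ottcher rate $(A'_i)^{\mu^{n-j+1}}$ via (\ref{e101}), respectively.

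Concretely, fix $2\le j \le n$ and $\bm{t}\in J_j$. Set $\bm{t}':=c_{j-1}\bm{t}$ and $\bm{h}':=c_{j-1}\bm{h}/c_n$. From $c_j\le \rho c_{j-1}$ in (\ref{cn1}) one has $c_{j-1}/c_j\ge 1/\rho$, so $\|\bm{t}'\|\in[\pi c_{j-1}/c_j,\pi]\subset[\pi/\rho,\pi]$, i.e., $\bm{t}'\in J_{1/\rho}$, while $\bm{h}'\ge\bm{0}$. Because
\begin{equation*}
\mathrm{e}^{-\bm{h}/c_n+\mathrm{i}\bm{t}} \;=\; \mathrm{e}^{-\bm{h}'/c_{j-1}+\mathrm{i}\bm{t}'/c_{j-1}},
\end{equation*}
Lemma~\ref{lem1} applied with $\epsilon=1/\rho$ and $k=j-1$ delivers $|f_{j-1}^{(l)}(\mathrm{e}^{-\bm{h}/c_n+\mathrm{i}\bm{t}})|\le\theta_l<1$ for every $l$. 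Writing $\bm{w}:=\bm{f}_{j-1}(\mathrm{e}^{-\bm{h}/c_n+\mathrm{i}\bm{t}})$ and $\theta:=\max_l \theta_l$, each $|w_l|\le\theta$, so the triangle inequality on the power series of $f^{(i)}_{n-j+1}$ gives
\begin{equation*}
\left|f^{(i)}_n(\mathrm{e}^{-\bm{h}/c_n+\mathrm{i}\bm{t}})\right| \;=\; \left|f^{(i)}_{n-j+1}(\bm{w})\right| \;\le\; f^{(i)}_{n-j+1}(\theta\bm{1}).
\end{equation*}
In the Schr\"oder case, (\ref{en1}) yields $\gamma^{-m}f_m^{(i)}(\theta\bm{1})\to Q^{(i)}(\theta\bm{1})\in(0,\infty)$, so $f_m^{(i)}(\theta\bm{1})\le A_i\gamma^m$ after enlarging the constant to cover finitely many small $m$. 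In the B\"ottcher case, (\ref{e101}) yields $-\mu^{-m}\log f_m^{(i)}(\theta\bm{1})\to G^{(i)}(\theta\bm{1})$, and $G^{(i)}(\theta\bm{1})>0$ because $\theta<1$ forces the multivariate B\"ottcher solution $\overline{\bm{G}}(\theta\bm{1})=\exp\bm{G}(\theta\bm{1})$ to lie strictly below $\bm{1}$; hence $f_m^{(i)}(\theta\bm{1})\le(A'_i)^{\mu^m}$ with $A'_i=\exp(-G^{(i)}(\theta\bm{1})/2)\in(0,1)$. Setting $m=n-j+1$ finishes the range $2\le j\le n$.

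The main obstacle is the boundary case $j=1$, where the decomposition degenerates since $\bm{f}_0=\mathrm{id}$ and Lemma~\ref{lem1} cannot be invoked with $k=0$. I treat this case directly: $J_1=\{\|\bm{t}\|\in[\pi/c_1,\pi]\}$ is a fixed compact annulus with $\bm{t}\not\equiv\bm{0}\pmod{2\pi\mbb{Z}^d}$, so the aperiodicity built into the nonsingularity/positive regularity of $\bm{f}$ (together with Condition (\textbf{C1}), as used in the $k<k_0$ part of the proof of Lemma~\ref{lem1}) gives $|f^{(l)}(\mathrm{e}^{\mathrm{i}\bm{t}})|<1$ pointwise; compactness and the large-$\|\bm{h}\|$ comparison argument from Lemma~\ref{lem1} then yield a uniform $\theta<1$ with $|f^{(l)}(\mathrm{e}^{-\bm{h}/c_n+\mathrm{i}\bm{t}})|\le\theta$ for all $\bm{h}\ge\bm{0}$, $\bm{t}\in J_1$, and $l$. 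Using $\bm{f}_n=\bm{f}_{n-1}\circ\bm{f}$ and reapplying the previous paragraph with $m=n-1$ closes $j=1$, and the mismatch between $\gamma^{n-1}$ and $\gamma^n$ (respectively between $(A'_i)^{\mu^{n-1}}$ and $(A'_i)^{\mu^n}$, exploiting $A'_i<1$) is absorbed into $A_i$ and $A'_i$.
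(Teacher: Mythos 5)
Your proposal is correct and follows essentially the same route as the paper: decompose $\bm{f}_n = \bm{f}_{n-j+1}\circ\bm{f}_{j-1}$, note $c_{j-1}\bm{t}\in J_{\epsilon}$ for $\bm{t}\in J_j$, invoke Lemma~\ref{lem1} to bound the inner block by $\theta<1$, pass to $f^{(i)}_{n-j+1}(\theta\bm{1})$ by the power-series triangle inequality, and then read off the Schr\"oder and B\"ottcher rates from \reff{en1} and \reff{e101} respectively. The one genuine improvement in your write-up is the separate treatment of $j=1$: the paper applies Lemma~\ref{lem1} with $k=j-1=0$, but since $\bm{f}_0=\mathrm{id}$, the quantity $|f_0^{(i)}(\mathrm{e}^{-\bm{h}/c_n+\mathrm{i}\bm{t}})|=\mathrm{e}^{-h^{(i)}/c_n}$ equals $1$ at $\bm{h}=\bm{0}$, so the stated bound $\le\theta_i<1$ cannot hold there; the claim ``$k\ge0$'' in Lemma~\ref{lem1} is really only sound for $k\ge1$. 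Your patch — use $\bm{f}_n=\bm{f}_{n-1}\circ\bm{f}$ for $j=1$, bound the inner $\bm{f}=\bm{f}_1$ on the compact annulus $J_1$ by aperiodicity plus the large-$\|\bm{h}\|$ comparison, and absorb the one-step mismatch by replacing $A'_i$ with $(A'_i)^{1/\mu}\in(0,1)$ — is correct and closes this small gap. One cosmetic remark: with the paper's convention $\bm{G}(\bm{s})=\lim_n(-\mu^{-n}\log\bm{f}_n(\bm{s}))\ge\bm{0}$ and $\overline{\bm{G}}=\exp\bm{G}$, the value $\overline{\bm{G}}(\theta\bm{1})$ lies \emph{above} $\bm{1}$, not below; what you actually need (and use) is simply $G^{(i)}(\theta\bm{1})>0$ for $\theta<1$, which is correct.
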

  \begin{proof}
  If $\bm{t}\in J_j, j\geq1$, then one can check that $
  	c_{j-1}\bm{t}\in J_{\epsilon}$, where  $\epsilon=\inf_{k\ge1}c_{k-1}/c_k\in(0,1)$ by our choice (\ref{cn1}).  Therefore, by Lemma \ref{lem1}, for $j\geq1,~\bm{h}\geq\bm{0}, \bm{t}\in J_j$,
  	\[
  |f_{j-1}^{(i)}(\mathrm{e}^{-\bn{h}/c_n+\mathrm{i}\bn{t}})|=|f_{j-1}^{(i)}(\mathrm{e}^{-\bn{h}(c_{j-1}/c_n)/c_{j-1}+\mathrm{i}(\bn{t}c_{j-1})/c_{j-1}})|\le \theta_i<1.\]
  	Using branching property, we recursively bound the generating function:
  	\[
  	|f_n^{(i)}(\mathrm{e}^{-\bn{h}/c_n+\mathrm{i}\bn{t}})|=\left|f_{n-j+1}^{(i)}\left(\bm{f}_{j-1}(\mathrm{e}^{-\bn{h}/c_n+\mathrm{i}\bn{t}})\right)\right|\le f_{n-j+1}^{(i)}(\theta\bm{1}),\]
 where $\theta=\max_{1\le i \le d}\theta_i$. \par 	In the Schr\"{o}der case, by (\ref{en1}), there exists positive constant $A_i$ such that $$|f_n^{(i)}(\mathrm{e}^{-\bn{h}/c_n+\mathrm{i}\bn{t}})|\le A_i\cdot\gamma^{n-j+1},~1\le i \le d.$$  \par 	 	
  	In the B\"{o}ttcher case, by (\ref{e101}), there exists positive constant $A'_i$ such that  $$|f_n^{(i)}(\mathrm{e}^{-\bn{h}/c_n+\mathrm{i}\bn{t}})|\le (A'_i)^{\mu^{n-j+1}}, ~1\le i \le d.$$ The proof is complete.
  \end{proof}
\subsection{Estimates on Le\'vy concentration functions}  
  For $r>0$, define the $d$-dimensional Le\'vy concentration function: $$Q(\bm{X};r)=\sup_{\bm{s}\in\mbb{R}^d}\mbb{P}(\vt\bm{X}-\bm{s}\vt\le r).$$
For ease of reference in subsequent proofs, we compile below a series of auxiliary lemmas governing the behavior of $Q(\bm{X};r)$. 
 \begin{lemma}\cite[Page 1]{Esseen}\label{auxlem1}
 	If $\bm{X}$ and $\bm{Y}$ are independent random vectors, then $Q(\bm{X}+\bm{Y};r)\le \min\{Q(\bm{X};r),Q(\bm{Y};r)\}$ for every $r>0$.
 \end{lemma}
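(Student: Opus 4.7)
The plan is a short conditioning argument relying on independence and the translation-invariant definition of $Q$. I would first observe that for any fixed $\bm{s}\in\mathbb{R}^d$ and any fixed realization $\bm{Y}=\bm{y}$, the event $\{\Vert\bm{X}+\bm{Y}-\bm{s}\Vert\le r\}$ coincides with $\{\Vert\bm{X}-(\bm{s}-\bm{y})\Vert\le r\}$. Therefore
\[
\mathbb{P}(\Vert\bm{X}+\bm{Y}-\bm{s}\Vert\le r\mid \bm{Y}=\bm{y})=\mathbb{P}(\Vert\bm{X}-(\bm{s}-\bm{y})\Vert\le r)\le Q(\bm{X};r)
\]
by the very definition of the concentration function (taking the supremum over all possible centers $\bm{s}-\bm{y}$ is already built into $Q(\bm{X};r)$).

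Next I would integrate this inequality against the law of $\bm{Y}$, invoking independence to justify the conditional formulation. This yields
\[
\mathbb{P}(\Vert\bm{X}+\bm{Y}-\bm{s}\Vert\le r)=\int \mathbb{P}(\Vert\bm{X}-(\bm{s}-\bm{y})\Vert\le r)\,\mathrm{d}\mathbb{P}_{\bm{Y}}(\bm{y})\le Q(\bm{X};r).
\]
Since the right-hand side does not depend on $\bm{s}$, taking the supremum over $\bm{s}\in\mathbb{R}^d$ on the left yields $Q(\bm{X}+\bm{Y};r)\le Q(\bm{X};r)$. An identical argument, conditioning on $\bm{X}$ instead of $\bm{Y}$, gives $Q(\bm{X}+\bm{Y};r)\le Q(\bm{Y};r)$, and taking the minimum of the two bounds finishes the proof.

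There is really no serious obstacle here: the statement is a classical and essentially definitional property of Lévy concentration functions. The only small point worth being careful about is ensuring the supremum on the right really does uniformly dominate the integrand for every shift $\bm{s}-\bm{y}$, so that Fubini/tower property can be applied without fuss; this is automatic since $Q(\bm{X};r)$ is defined as a supremum over all centers.
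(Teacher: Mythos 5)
Your conditioning argument is correct and is the standard proof of this classical fact; the paper itself gives no proof at all, merely citing Esseen, so there is nothing to compare against beyond noting that your derivation fills in exactly what one would expect.
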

 
\begin{lem}\label{auxlem2}{\cite[Lemma 6.1]{Esseen}}
  Let $\bm{X}$ be a $d$-dimensional random vector with the characteristic function 
  $$
  \varphi_{\textbf{X}}(\bm{t})=\mbb{E}(\mathrm{e}^{\textup{i}\textbf{t}\cdot\textbf{X}}),$$  and the concentration function $Q(\bm{X}; r)$. Then for parameters $a,r$ with $0\le ar\le 1$, the following estimate holds:
 	\begin{equation}\label{eqlem2}
 	Q(\bm{X}; r) \le C_d\cdot\int_{\vt\textbf{t}\vt\le a} |\varphi_{\textbf{X}}(\bm{t})|\mathrm{d}\bm{t},
 	\end{equation} 
 	where $C_d>0$ is a constant depending on the dimension $d$.
\end{lem}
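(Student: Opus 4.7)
The plan is to adapt the classical Esseen Fourier-analytic argument: construct a smooth nonnegative kernel $H:\mbb{R}^d\to\mbb{R}_+$ whose Fourier transform $\hat{H}$ is supported in $\{\vt\bm{t}\vt\le a\}$, is uniformly bounded, and which satisfies $H(\bm{x})\ge c_d>0$ on $\{\vt\bm{x}\vt\le r\}$. The condition $0\le ar\le 1$ is precisely the uncertainty-principle constraint that allows such simultaneous localization in space and frequency.

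Explicitly, I would build a tensor-product Fej\'er kernel. Starting from the one-dimensional bump $k(x)=\left(\sin(\alpha x)/(\alpha x)\right)^2$, whose Fourier transform is a triangular function supported on $[-2\alpha,2\alpha]$, I set $H(\bm{x})=\prod_{j=1}^d k(x_j)$ and choose $\alpha=a/(2\sqrt{d})$. Then $\hat{H}$ is supported in the cube $\{|t_j|\le a/\sqrt{d}\ \text{for all}\ j\}$, which is contained in the Euclidean ball $\{\vt\bm{t}\vt\le a\}$, and is bounded by a dimensional constant $M_d$. Using $k(x)\ge k_0>0$ whenever $|\alpha x|\le 1/2$, and noting that $ar\le 1$ forces $\alpha r\le 1/(2\sqrt{d})$, one obtains the pointwise lower bound $H(\bm{x})\ge k_0^d$ on the Euclidean ball $\{\vt\bm{x}\vt\le r\}$.

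Given this kernel, for any fixed $\bm{s}\in\mbb{R}^d$, the indicator bound $\mathbf{1}_{\vt\bm{x}-\bm{s}\vt\le r}\le k_0^{-d}H(\bm{x}-\bm{s})$ combined with Fourier inversion yields
\[
\mbb{P}(\vt\bm{X}-\bm{s}\vt\le r)\le k_0^{-d}\,\mbb{E}\,H(\bm{X}-\bm{s})=\frac{k_0^{-d}}{(2\pi)^d}\int_{\mbb{R}^d}\hat{H}(\bm{t})\,\mathrm{e}^{-\ii\bm{t}\cdot\bm{s}}\,\varphi_{\bm{X}}(\bm{t})\,\mathrm{d}\bm{t}.
\]
Taking absolute values inside the integral and invoking the support and sup-norm bound on $\hat{H}$ gives a bound of the form $C_d\int_{\vt\bm{t}\vt\le a}|\varphi_{\bm{X}}(\bm{t})|\,\mathrm{d}\bm{t}$, with $C_d=M_d/(k_0^d(2\pi)^d)$ depending only on $d$. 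Since this estimate is uniform in $\bm{s}$, taking the supremum over $\bm{s}\in\mbb{R}^d$ recovers $Q(\bm{X};r)$ on the left-hand side.

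The only real technical delicacy is reconciling the product (cube) Fourier support of a tensor-product kernel with the Euclidean ball in the statement; this is handled by the conservative choice $\alpha=a/(2\sqrt{d})$, absorbing the $\sqrt{d}$-loss into $C_d$. A genuinely radial (Bessel-type) construction would save this factor but complicates the verification of the spatial lower bound on $H$ without improving the form of the final estimate, so I would prefer the product kernel.
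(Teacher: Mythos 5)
The paper does not prove this lemma; it is cited directly from Esseen's 1968 paper, so there is no in-paper argument to compare against. Your smoothing-kernel plus Fourier-inversion route is indeed the standard way to prove Esseen's concentration-function inequality, and the overall structure (Fej\'er-type kernel, spatial lower bound on the ball, Fourier inversion, sup over $\bm{s}$) is sound.

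There is, however, a genuine constant-tracking error. You assert that $\hat{H}$ ``is bounded by a dimensional constant $M_d$,'' but this is not so: for $k(x)=(\sin(\alpha x)/(\alpha x))^2$ one has $\|\hat{k}\|_\infty=\pi/\alpha$, so with your choice $\alpha=a/(2\sqrt{d})$ the bound is $\|\hat{H}\|_\infty=(2\pi\sqrt{d}/a)^d$, which depends on $a$. Consequently the constant you produce is $C_d(a)=d^{d/2}/(a k_0)^d$, proportional to $a^{-d}$, not a pure dimensional constant. This is not a cosmetic point: the inequality $Q(\bm{X};r)\le C_d\int_{\vt\bm{t}\vt\le a}|\varphi_{\bm{X}}(\bm{t})|\,\mathrm{d}\bm{t}$ with $C_d$ independent of $a$ is in fact false under $ar\le 1$ (take $\bm{X}$ a point mass and let $a\to 0$: the left side is $1$ while the right side is $O(a^d)$). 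The correct and scale-invariant form of Esseen's inequality carries the factor $a^{-d}$ exactly as your computation produces; you should state it and absorb the $a$-dependence into the conclusion rather than claim a purely dimensional constant.

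Separately, note that the hypothesis $0\le ar\le 1$ as printed is inconsistent with the paper's own later use of the lemma, which applies it with $r=1$ and $a=\pi$ so that $ar=\pi>1$. The intended condition is surely $ar\ge 1$ (or ``$a$ of order $1/r$ or larger''); since your construction uses $ar\le 1$ to force $\alpha r\le 1/(2\sqrt{d})$, you should be aware that in the regime the paper actually needs ($ar\ge 1$) the right choice is $\alpha\asymp 1/r$ rather than $\alpha\asymp a$, with the spatial lower bound then coming from $\alpha r\le 1/2$ and the Fourier support fitting inside $B_a$ precisely because $a\gtrsim \sqrt{d}/r$. Either way, the proof of the inequality as used in this paper (fixed $r=1$, $a=\pi$) goes through; just be explicit about which regime of $(a,r)$ you are treating and how the constant depends on it.
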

\begin{lem}\label{auxlem3}{\cite[Theorem 4]{hazal}}
	Let $\bm{X}_1, \cdots, \bm{X}_n, \bm{X}_1',\cdots,\bm{X}_n'$ be independent $d$-dimensional random vectors such that $\bm{X}_i$ and $\bm{X}_i'$ have the same distributions for any $i$. Denote the distribution function of $\bm{X}_i^s:=\bm{X}_i-\bm{X}_i'$ by $F_i^s$. Then for $n\geq 8$,
	\begin{align*}
	Q(\sum_{i=1}^n\bm{X}_i;1)&\le C\cdot n^{-1}\left(\sum_{i=1}^nQ(\bm{X}_i^s;1)\right)D^{-d/2}
	\end{align*}
where $D=\inf_{\vt\bm{u}\vt_2=1}\sum_{i=1}^n\int_{\mbb{R}^d}\min\{1,(\bm{t}\cdot\bm{u})^2\}\mathrm{d}F^s_i(\bm{t}).$	
	\end{lem}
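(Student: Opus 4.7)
My plan is to derive the bound via a multivariate adaptation of Esseen's characteristic-function method, combined with an averaging trick that extracts the individual concentration functions $Q(\bm{X}_i^s;1)$. The starting point is Lemma \ref{auxlem2}, which at $a=r=1$ yields
$$Q(\bm{S};1) \le C_d \int_{\vt\bm{t}\vt \le 1} |\varphi_{\bm{S}}(\bm{t})|\,\mathrm{d}\bm{t}, \qquad \bm{S} := \sum_{i=1}^n \bm{X}_i,$$
and by independence and the symmetrization identity $|\varphi_{\bm{X}_i}(\bm{t})|^2 = \varphi_{\bm{X}_i^s}(\bm{t}) \in [0,1]$, one has $|\varphi_{\bm{S}}(\bm{t})| = \prod_{i=1}^n |\varphi_{\bm{X}_i}(\bm{t})|$, a product of moduli of scalar characteristic functions whose squares are real and nonnegative.

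The analytic workhorse is the elementary inequality $1-\cos\theta \ge \tfrac{1}{2}\min(1,\theta^2)$, which after integration gives $1-\varphi_{\bm{X}_i^s}(\bm{t}) \ge \tfrac{1}{2}\Phi_i(\bm{t})$ with $\Phi_i(\bm{t}) := \int \min(1,(\bm{t}\cdot\bm{x})^2)\,\mathrm{d}F_i^s(\bm{x})$. Combined with the algebraic bound $1-|\varphi_{\bm{X}_i}| \ge \tfrac{1}{2}(1-\varphi_{\bm{X}_i^s})$ (valid since $|\varphi_{\bm{X}_i}|\in[0,1]$) and $|\varphi_{\bm{X}_i}| \le \exp(-(1-|\varphi_{\bm{X}_i}|))$, this yields $\prod_{i \in I}|\varphi_{\bm{X}_i}(\bm{t})| \le \exp(-\tfrac{1}{4}\sum_{i \in I}\Phi_i(\bm{t}))$ for any $I \subset \{1,\ldots,n\}$. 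In spherical coordinates $\bm{t} = r\bm{u}$ with $r \in [0,1]$ and $\vt\bm{u}\vt_2 = 1$, the radial integral $\int_0^1 r^{d-1}\exp(-cr^2 D)\,\mathrm{d}r \asymp D^{-d/2}$ produces the dimensional factor, since by definition $\sum_i \Phi_i(\bm{u}) \ge D$ for every unit $\bm{u}$.

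To extract the concentration prefactor, I would split the product as $\prod_i |\varphi_{\bm{X}_i}(\bm{t})| = |\varphi_{\bm{X}_j}(\bm{t})| \cdot \prod_{i \ne j}|\varphi_{\bm{X}_i}(\bm{t})|$, bound the tail product of $n-1$ factors by the Gaussian-type exponential above (for $n \ge 8$, omitting one index still leaves $\sum_{i \ne j}\Phi_i \ge cD$), and then average the resulting inequality over $j \in \{1,\ldots,n\}$. The angular integral of the singled-out factor $|\varphi_{\bm{X}_j}(r\bm{u})|$ over $\vt\bm{u}\vt_2 = 1$, weighted by the Gaussian profile $r^{d-1}\exp(-cr^2 D)$, can then be controlled via a Fourier-inversion argument applied to a smooth approximation of the indicator $\mathbf{1}_{\vt\bm{x}\vt \le 1}$, together with the Plancherel identity, which bounds this angular piece by a multiple of $Q(\bm{X}_j^s;1)$. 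Averaging over $j$ completes the proof.

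The main obstacle is the simultaneous extraction of the two factors $D^{-d/2}$ and $n^{-1}\sum_j Q(\bm{X}_j^s;1)$: the classical multivariate Esseen-Rogozin argument yields only the dimensional factor, and coupling it to an averaged concentration requires a careful decomposition of the Fourier integral into its radial (dispersion-driven) and angular (concentration-driven) components. The hypothesis $n \ge 8$ is needed precisely so that isolating one index at a time does not collapse $\sum_{i \ne j}\Phi_i$ below a constant fraction of $D$, and keeping the absolute constants $C$ uniform in $d$ and $n$ while the saturation regime $r^2(\bm{u}\cdot\bm{x})^2 \approx 1$ is handled carefully forms the most delicate bookkeeping step.
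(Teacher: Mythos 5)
The paper does not prove this lemma: it is stated verbatim as a citation of Theorem 4 in Hal\'asz \cite{hazal} and is invoked as a black box in the proof of Lemma~\ref{lem3}. So there is no in-paper argument to compare with; your proposal has to be judged as an attempt to reconstruct Hal\'asz's original proof.

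Your overall framework is the right one: the Esseen bound of Lemma~\ref{auxlem2}, the symmetrization identity $|\varphi_{\bm{X}_i}|^2=\varphi_{\bm{X}_i^s}$, the inequality $1-\cos\theta\ge c\min(1,\theta^2)$, and the radial estimate $\int_0^1 r^{d-1}\mathrm{e}^{-cr^2 D}\,\mathrm{d}r\lesssim D^{-d/2}$ are exactly the Esseen--Rogozin ingredients, and carried out alone they already give $Q(\sum_i\bm{X}_i;1)\lesssim D^{-d/2}$. The problem is the step that is supposed to produce the extra factor $n^{-1}\sum_j Q(\bm{X}_j^s;1)$, and it has two genuine gaps. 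First, with $\Phi_i(\bm{t}):=\int\min\{1,(\bm{t}\cdot\bm{x})^2\}\,\mathrm{d}F_i^s(\bm{x})$, the claim that ``for $n\ge 8$, omitting one index still leaves $\sum_{i\ne j}\Phi_i\ge cD$'' is false at the stated level of generality. For merely independent (not i.i.d.) summands, a single $\bm{X}_j$ may carry all of the dispersion: take $d=1$, $\bm{X}_1$ uniform on a long interval and $\bm{X}_2,\dots,\bm{X}_n$ degenerate; then $D>0$ but $\sum_{i\ne 1}\Phi_i\equiv 0$, so the peeled tail product for $j=1$ has no Gaussian decay whatsoever, no matter how large $n$ is. The hypothesis $n\ge 8$ does not rescue a per-$j$ lower bound on $\sum_{i\ne j}\Phi_i$; in Hal\'asz's actual argument it plays a different role (it supplies slack after an AM--GM type peeling of the whole product, not of a single factor). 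Second, the Plancherel step is off by a power: the Parseval computation controls $\int_{\Vert\bm{t}\Vert\le1}|\varphi_{\bm{X}_j}(\bm{t})|^2\,\mathrm{d}\bm{t}=\int\varphi_{\bm{X}_j^s}(\bm{t})\,\mathrm{d}\bm{t}\lesssim Q(\bm{X}_j^s;1)$, i.e.\ the \emph{square} of the modulus, not $\int|\varphi_{\bm{X}_j}|$. If you single out the first power $|\varphi_{\bm{X}_j}|$ and then convert by Cauchy--Schwarz against the Gaussian weight $\mathrm{e}^{-c\Vert\bm{t}\Vert^2 D}$, you land at $Q(\bm{X}_j^s;1)^{1/2}D^{-d/4}$, which in the only regime where the statement is non-trivial ($D\ge1$, $Q\le1$) is strictly weaker than the target $Q(\bm{X}_j^s;1)D^{-d/2}$. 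You correctly identify the simultaneous extraction of both factors as the delicate point, but the peel-one-and-average mechanism you propose does not yet deliver it; that simultaneous extraction is precisely the non-trivial content of Hal\'asz's Theorem 4.
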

 By regarding $\bm{X}_{i,j}(\bm{h},n)$ as $\bm{X}_i$ in Lemma \ref{auxlem3}, actually we have
\[
Q\left(\sum_{i=1}^n\bm{X}_i;1\right)=\sup_{\bn{k}>\textbf{0}}\mbb{P}(\bm{S}_{\bn{l}}(\bm{h},n)=\bm{k}).\]
To estimate this concentration probability, we systematically apply the technical tools in lemmas \ref{lem1}-\ref{auxlem3}. 
\begin{lem}\label{lem3}
	For $\bm{h}\geq\bm{0}$ and $\bm{l}\in\mbb{Z}_+^d$ with $\gamma^{|\textbf{l}|}\rho^d<1$, there exists some constant $A(\bm{h})$ such that
	\begin{equation}\label{lemeq3}
	\sup_{n\geq1,\textbf{k}>\bn{0}}c^d_n\mbb{P}(\bm{S}_{\textbf{l}}(\bm{h},n)=\bm{k})\le A(\bm{h})|\bm{l}|^{-d/2}. \end{equation}
\end{lem}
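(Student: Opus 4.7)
My plan is to realize the supremum by Fourier inversion on the fundamental domain $[-\pi,\pi]^d$ of $\mbb{Z}^d$ and then bound the resulting integral of $|\varphi_{\bm{S}_{\bm{l}}}|$ by partitioning the frequency variable into three regimes, each of which is handled by one of the ingredients (Lemmas \ref{lem1}, \ref{lem2}, and \ref{auxlem3}) already collected above. Since $\bm{S}_{\bm{l}}(\bm{h},n)$ takes values in $\mbb{Z}_+^d$, Fourier inversion followed by the change of variable $\bm{t}=\bm{s}/c_n$ gives
\[
c_n^d\sup_{\bm{k}>\bm{0}}\mbb{P}\bigl(\bm{S}_{\bm{l}}(\bm{h},n)=\bm{k}\bigr)\le
(2\pi)^{-d}\int_{[-c_n\pi,\,c_n\pi]^d}\bigl|\varphi_{\bm{S}_{\bm{l}}}(\bm{s}/c_n)\bigr|\,\mathrm{d}\bm{s},
\]
so the task reduces to bounding the right-hand side by $A(\bm{h})|\bm{l}|^{-d/2}$.

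I would split the domain according to the norm of $\bm{s}/c_n$. In the low-frequency region $\|\bm{s}\|\le R_0$ (with $R_0$ a large but fixed constant) the dominant $|\bm{l}|^{-d/2}$ rate is obtained via the Kesten--Esseen inequality of Lemma \ref{auxlem3} applied to the i.i.d.\ summands making up $\bm{S}_{\bm{l}}(\bm{h},n)$: the symmetrised offspring vectors have $D\gtrsim|\bm{l}|$ uniformly in $n$ and in $\bm{h}$ on compacts, because of the lattice non-degeneracy of $\bm{Z}_n$ together with the convergence $c_n^{-1}\bm{Z}_n\to W\bm{v}$; after the rescaling this translates into a contribution of order $|\bm{l}|^{-d/2}$ to the integral. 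In the moderate-frequency region $\bm{s}/c_n\in J_j$ with $1\le j\le n$, Lemma \ref{lem2} provides $|f_n^{(i)}(\mathrm{e}^{-\bm{h}/c_n+\ii\bm{s}/c_n})|\le A_i\gamma^{n-j+1}$, while the denominator $f_n^{(i)}(\mathrm{e}^{-\bm{h}/c_n})$ is bounded from below uniformly in $n$ by the positive limit $\mbb{E}_i\mathrm{e}^{-\bm{h}\cdot\bm{v}W}$. Taking the product over $i$ with exponent $l^{(i)}$ and multiplying by the volume of $J_j$ in $\bm{s}$-coordinates (which is of order $(c_n/c_{j-1})^d\asymp\rho^{(n-j+1)d}$), the total contribution is controlled by a geometric sum in $n-j+1$ of common ratio $\gamma^{|\bm{l}|}\rho^d$; the hypothesis $\gamma^{|\bm{l}|}\rho^d<1$ is exactly what makes this series summable. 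Finally, the high-frequency region $\bm{s}/c_n\in J_\epsilon$ is handled by Lemma \ref{lem1}: the uniform bound $|f_n^{(i)}|\le\theta_i<1$ yields $|\varphi_{\bm{S}_{\bm{l}}}(\bm{s}/c_n)|\le(\theta/C(\bm{h}))^{|\bm{l}|}$ for some $\theta<1$, which decays exponentially in $|\bm{l}|$ and is absorbed into $A(\bm{h})$ once $|\bm{l}|$ exceeds the threshold implicit in $\gamma^{|\bm{l}|}\rho^d<1$.

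The main obstacle I anticipate is the interplay in the moderate-frequency region between the growing factor $C(\bm{h})^{-|\bm{l}|}$ coming from the uniform lower bound on the denominator $(\bm{f}_n(\mathrm{e}^{-\bm{h}/c_n}))^{\bm{l}}$ and the geometric decay $\gamma^{(n-j+1)|\bm{l}|}$ supplied by Lemma \ref{lem2}. The condition $\gamma^{|\bm{l}|}\rho^d<1$ is precisely the balance that makes these factors cooperate across the scales $j=1,\dots,n$, and it simultaneously forces $|\bm{l}|$ to be at least a fixed constant depending only on $\gamma,\rho,d$, which is what allows the exponential-in-$|\bm{l}|$ prefactors to be absorbed into a single $|\bm{l}|$-independent constant $A(\bm{h})$. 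Equally delicate is ensuring that the Kesten--Esseen step in the low-frequency region genuinely produces the $|\bm{l}|^{-d/2}$ rate: this requires checking that each $\bm{X}_{i,j}(\bm{h},n)$ remains $d$-dimensionally non-degenerate on the lattice despite its mean being aligned asymptotically with $\bm{v}$, a point where the structural assumptions $\bm{f}(\bm{0})=\bm{0}$ and irreducibility of $\bm{M}$ come into play.
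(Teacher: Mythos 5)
Your proposal shares the Esseen--Fourier starting point with the paper, but the way you try to extract the $|\bm{l}|^{-d/2}$ factor is not sound, and the paper's proof is structured quite differently. Two concrete problems stand out. First, Lemma \ref{auxlem3} (Hal\'asz) is an upper bound on the concentration function $Q(\cdot;1)$ of a sum of independent vectors; it is not an estimate of the characteristic function, and it cannot be used to control a low-frequency piece of the Fourier integral. The factor $D^{-d/2}$ in Hal\'asz lives at the level of $\sup_{\bm{k}}\mbb{P}(\bm{S}_{\bm{l}}=\bm{k})$ itself, not of $\int_{\|\bm{s}\|\le R_0}|\varphi_{\bm{S}_{\bm{l}}}|$. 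Second, your high-frequency estimate is too weak: on $\bm{s}/c_n\in J_{\epsilon}$ you propose to use the flat bound $|f_n^{(i)}|\le\theta_i$ from Lemma \ref{lem1}, but the volume of this region in $\bm{s}$-coordinates is of order $c_n^d$, so the resulting contribution $c_n^d\cdot\theta^{|\bm{l}|}$ is unbounded in $n$ for each fixed $\bm{l}$. The paper never applies Lemma \ref{lem1} directly to the integral; it decomposes the region $\pi/c_n\le\|\bm{t}\|\le\pi$ into the geometric annuli $J_j$, $1\le j\le n$, and uses Lemma \ref{lem2}'s $A_i\gamma^{n-j+1}$ bound, whose decay in $n-j+1$ exactly compensates the annulus volume growth $(c_n/c_{j-1})^d\lesssim\rho^{(n-j+1)d}$; the hypothesis $\gamma^{|\bm{l}|}\rho^d<1$ is what makes the resulting geometric series summable.

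More fundamentally, you are missing the two-step architecture of the paper's argument. The Fourier integral, handled via the $J_j$-decomposition as above, only yields the bound $c_n^d\sup_{\bm{k}}\mbb{P}(\bm{S}_{\bm{l}_0}(\bm{h},n)=\bm{k})\le A_0(\bm{h})$ for a \emph{fixed} small $\bm{l}_0$ satisfying $\gamma^{|\bm{l}_0|}\rho^d<1$ -- with no $|\bm{l}|^{-d/2}$ decay whatsoever. The $|\bm{l}|^{-d/2}$ rate is obtained in a second step: decompose $\bm{l}$ into $\sum_i\lfloor l^{(i)}/\hat{a}\rfloor$ blocks of the form $\hat{a}\bm{e}_i$ plus a remainder $\bm{l}_{\mathrm{rest}}$, discard the remainder using the monotonicity of concentration functions (Lemma \ref{auxlem1} -- which your proposal never invokes), and then apply Hal\'asz (Lemma \ref{auxlem3}) \emph{to the concentration function of the blocked sum}, using $D\gtrsim|\bm{l}|$ (from lattice non-degeneracy) and $Q(\bm{S}_{\hat{a}\bm{e}_i}^s;1)\le Q(\bm{S}_{\hat{a}\bm{e}_i};1)\le C_i(\bm{h})c_n^{-d}$ from step one. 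Your final paragraph also claims that the exponential-in-$|\bm{l}|$ prefactors (from the denominator $(\bm{f}_n(\mathrm{e}^{-\bm{h}/c_n}))^{\bm{l}}$ and the $A_i^{l^{(i)}}$ in Lemma \ref{lem2}) can be absorbed into an $|\bm{l}|$-independent constant; that is not possible, and it is precisely to avoid this that the paper confines the integral estimate to the fixed block size $\hat{a}$.
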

\begin{proof}
{\bf Step 1}: Concentration Bound via Integral Inequality.

Let $\bm{l}_0\in \mbb{Z}_+^d$ satisfying $\gamma^{|\bn{l}_0|}\rho^d<1$. Applying Lemma \ref{auxlem2} with $\bm{X}=\bm{S}_{\bn{l}_0}(\bm{h},n)$, $r=1$ and $a=\pi$, we derive: 
	\begin{align*}
		\sup_{\bn{k}>\textbf{0}}\mbb{P}(\bm{S}_{\bn{l}_0}(\bm{h},n)=\bm{k})&\leq C\cdot\int_{\vt\bn{t}\vt\le \pi}|\mbb{E}\mathrm{e}^{\mathrm{i}\bn{t}\cdot \bn{S}_{\bn{l}_0}(\bn{h},n)}|\mathrm{d}\bm{t}\\
		&\le C\cdot\int_{\vt\bn{t}\vt\le \pi}\frac{\left|\bn{f}_n(\mathrm{e}^{\textup{i}\bn{t}-\bn{h}/c_n})^{\bn{l}_0}\right|}{\bn{f}_n(\mathrm{e}^{-\bn{h}/c_n})^{\bn{l}_0}}\mathrm{d}\bn{t},
	\end{align*}
	where we use (\ref{eshn}) in the last inequality. Since $\bm{Z}_n/c_n$ converges almost surely to a non-degenerate random variable, $\bm{f}_n(\mathrm{e}^{-\bn{h}/c_n})$ is uniformly bounded away from $\bm{0}$ (component-wise) for  $\bm{h}>\bm{0}$. When $\bm{h}=\bm{0}$, the denominator simplifies to $\bm{f}_n(\mathrm{e}^{-\bn{h}/c_n})=\bm{1}$. Therefore, 
	\begin{equation}\label{equ1}
\sup_{\bn{k}>\textbf{0}}\mbb{P}(S_{\bn{l}_0}(\bm{h},n)=\bm{k})\le C\cdot\int_{\vt\bn{t}\vt\le \pi}\left|\bn{f}_n(\mathrm{e}^{\textup{i}\bn{t}-\bn{h}/c_n})^{\bn{l}_0}\right|\mathrm{d}\bm{t}.	\end{equation}

{\bf Step 2}: Splitting the Integral Domain.

 Recall that
\[
J_j=\{\bm{t}\in\mbb{R}^d: \pi/c_j \le \vt\bm{t}\vt\le \pi/c_{j-1} \}.\]
The volume of $J_j$ can be calculated as
\[
|J_1|=\frac{\pi^{d/2}\pi^d}{\Gamma(1+d/2)}-\frac{\pi^{d/2}(\pi/c_1)^d}{\Gamma(1+d/2)},~|J_j|=\frac{\pi^{d/2}(\pi/c_{j-1})^d}{\Gamma(1+d/2)}-\frac{\pi^{d/2}(\pi/c_{j})^d}{\Gamma(1+d/2)},~j\ge2.\]
Hence by Lemma \ref{lem2}, we get for $j\geq1$
\begin{align*}
\int_{J_j}\left|\bn{f}_n(\mathrm{e}^{\textup{i}\bn{t}-\bn{h}/c_n})^{\bn{l}_0}\right|\mathrm{d}\bm{t}&\le C\cdot\gamma^{(n-j+1)|\bn{l}_0|}\left((\pi/c_{j-1})^d-(\pi/c_{j})^d\right)\\
&\le C\cdot\gamma^{(n-j+1)|\bn{l}_0|}c_{j-1}^{-d}.
\end{align*}
For the central region:
\[
\int_{\vt\bn{t}\vt\le \pi/c_n}\left|\bn{f}_n(\mathrm{e}^{\textup{i}\bn{t}-\bn{h}/c_n})^{\bn{l}_0}\right|d\bm{t}\le \frac{\pi^{d/2}(\pi/c_n)^d}{\Gamma(1+d/2)}.\]

{\bf Step 3}: Summation and Convergence.

Combining these estimates gives
\begin{align}\label{ekm2}
	c_n^d\int_{\vt\bn{t}\vt\le \pi}\left|\bn{f}_n(\mathrm{e}^{\textup{i}\bn{t}-\bn{h}/c_n})^{\bn{l}_0}\right|\mathrm{d}\bm{t}\le C\cdot(1+\sum_{j=1}^n\gamma^{(n-j+1)|\bn{l}_0|}(c_n/c_{j-1})^{d}).
\end{align}
Using the growth relation $c_n\le \rho^{n-j+1}c_{j-1}$, the series converges if $\gamma^{|\bn{l}_0|}\rho^d<1$,  which holds by the choice of $\bn{l}_0$. Thus, by (\ref{equ1}),
\begin{equation}\label{e1}
c_n^d\sup_{\bn{k}>\textbf{0}}\mbb{P}(S_{\bn{l}_0}(\bm{h},n)=\bm{k})\le A_0(\bm{h})
\end{equation}
for some constant $A_0(\bm{h})>0$. This establishes (\ref{lemeq3}) for $\bm{l}=\bm{l}_0$.

{\bf Step 4}: Generalization to $|\bm{l}|>|\bm{l}_0|$.

Let $\hat{a}:=\inf\{a\in \mbb{Z}_+:\gamma^a\rho<1\}$. By construction,  $\bm{l}_0=\hat{a}\bm{e}_i$ satisfies (\ref{e1}) for all $1\le i \le d$. For $\bm{l}$ with $|\bm{l}|>|\bm{l}_0|$,  decompose:
\[
\bm{l}=\sum_{i=1}^d\left\lfloor\cfrac{l^{(i)}}{\hat{a}}\right\rfloor\hat{a}\bm{e}_i+\bm{l}_{\text{rest}},
\] 
where $\lfloor\cdot\rfloor$ means the integer part and $\bm{l}_{\text{rest}}$ has components  $l^{(i)}-\lfloor l^{(i)}/\hat{a}\rfloor\hat{a}$. Applying Lemma \ref{auxlem1} with $r=1$ we get 
\begin{align*}
	Q(S_{\bn{l}}(\bm{h},n),1)&=\sup_{\bn{k}>\textbf{0}}\mbb{P}(S_{\bn{l}}(\bm{h},n)=\bm{k})\\&=\sup_{\bn{k}>\textbf{0}}\mbb{P}\left(\sum_{i=1}^d\sum_{j=1}^{\lfloor l^{(i)}/\hat{a}\rfloor}\bm{S}_{\hat{a}\bm{e}_i}(\bm{h},n,j)+\bm{S}_{\bn{l}_{\text{rest}}}(\bm{h},n)=\bm{k}\right)\\
	&\le\sup_{\bn{k}>\textbf{0}}\mbb{P}\left(\sum_{i=1}^d\sum_{j=1}^{\lfloor l^{(i)}/\hat{a}\rfloor}\bm{S}_{\hat{a}\bm{e}_i}(\bm{h},n,j)=\bm{k}\right),
\end{align*}
where $\bm{S}_{\hat{a}\bm{e}_i}(\bm{h},n,j), j\geq1$ are i.i.d. copies of $\bm{S}_{\hat{a}\bm{e}_i}(\bm{h},n)$. For $|\bm{l}|>8\hat{a}$, using Lemma \ref{auxlem1} and \ref{auxlem3} yields 
\begin{align}\label{e2}
	\sup_{\bn{k}>\textbf{0}}\mbb{P}\left(\sum_{i=1}^d\sum_{j=1}^{\lfloor l^{(i)}/\hat{a}\rfloor}\bm{S}_{\hat{a}\bm{e}_i}(\bm{h},n,j)=\bm{k}\right)\leq C\cdot |\bm{l}|^{-d/2} \max_{1\le i\le d}\{Q\left(\bm{S}_{\hat{a}\bm{e}_i}(\bm{h},n);1\right)\}.
\end{align}
From (\ref{e1}), we have the estimate for the concentration probability at $\bm{l}=\hat{a}\bm{e}_i$:
\[
c_n^dQ\left(\bm{S}_{\hat{a}\bm{e}_i}(\bm{h},n);1\right)\le C_i(\bm{h}),~1\le i \le d.\] Substituting into (\ref{e2}), there exists $A(\bm{h})>0$ such that for $\bm{l}\in\mbb{Z}_+^d$ with $\gamma^{|\bn{l}|}\rho^d<1$, 
\begin{equation}
	\sup_{n\geq1,\bn{k}>\textbf{0}}c_n^d\mbb{P}(\bm{S}_{\bn{l}}(\bm{h},n)=\bm{k})\le A(\bm{h})|\bm{l}|^{-d/2}.\end{equation}
This completes the proof.
\end{proof}
\begin{lem}\label{lem4}
For $\bm{l}\in\mbb{Z}_+^d$ with $\gamma^{|\textbf{l}|}\rho^d<1$, there exist positive constants $A$ and $\delta$ such that
\[
c_n^d\mbb{P}(\bm{Z}_n=\bm{k}|\bm{Z}_0=\bm{l})\le A|\bm{l}|^{-d/2}\mathrm{e}^{-\delta|\textbf{l}|+|\textbf{k}|/c_n}, n\geq1.\]
\end{lem}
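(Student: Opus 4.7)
The starting point is the Cram\'er representation \reff{shn1}. The right-hand side of the desired bound contains $\mathrm{e}^{|\bm{k}|/c_n}$ with coefficient $1$, which dictates the natural choice of parameter $\bm{h}=\bm{1}$ in \reff{shn1}. With this choice the identity reads
\[
\mbb{P}(\bm{Z}_n=\bm{k}\mid\bm{Z}_0=\bm{l})=\mathrm{e}^{|\bm{k}|/c_n}\bigl(\bm{f}_n(\mathrm{e}^{-\bm{1}/c_n})\bigr)^{\bm{l}}\,\mbb{P}(\bm{S}_{\bm{l}}(\bm{1},n)=\bm{k}),
\]
so the proof reduces to two separate tasks: controlling the concentration factor $\mbb{P}(\bm{S}_{\bm{l}}(\bm{1},n)=\bm{k})$, and controlling the product of generating functions $(\bm{f}_n(\mathrm{e}^{-\bm{1}/c_n}))^{\bm{l}}$.

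For the concentration factor, Lemma \ref{lem3} applied with $\bm{h}=\bm{1}$ yields, under the same hypothesis $\gamma^{|\bm{l}|}\rho^d<1$,
\[
c_n^d\,\mbb{P}(\bm{S}_{\bm{l}}(\bm{1},n)=\bm{k})\le A(\bm{1})\,|\bm{l}|^{-d/2},\qquad n\ge 1,\ \bm{k}>\bm{0},
\]
which supplies the factor $A\,|\bm{l}|^{-d/2}$ in the target bound.

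The main (but routine) step is the exponential decay of $(\bm{f}_n(\mathrm{e}^{-\bm{1}/c_n}))^{\bm{l}}$. It suffices to exhibit some $\delta>0$ such that $f^{(i)}_n(\mathrm{e}^{-\bm{1}/c_n})\le \mathrm{e}^{-\delta}$ uniformly in $i$ and $n\ge 1$, for then raising to the power $\bm{l}$ gives $\mathrm{e}^{-\delta|\bm{l}|}$. I would argue in two ranges. For $n$ large, use
\[
f^{(i)}_n(\mathrm{e}^{-\bm{1}/c_n})=\mbb{E}_i\,\mathrm{e}^{-|\bm{Z}_n|/c_n},
\]
and recall from \reff{cn} that $\bm{Z}_n/c_n\to W\bm{v}$ a.s., hence $|\bm{Z}_n|/c_n\to W$ (using $\bm{1}\cdot\bm{v}=1$). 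Since $W>0$ a.s. on the whole space (this uses $\bm{f}(\bm{0})=\bm{0}$, so there is no extinction), dominated convergence gives $\mbb{E}_i\,\mathrm{e}^{-|\bm{Z}_n|/c_n}\to\mbb{E}_i\,\mathrm{e}^{-W}<1$; taking a uniform upper bound over the finitely many types $i$ yields the required $\mathrm{e}^{-\delta_1}<1$ for $n\ge n_0$. For $n<n_0$, the assumption $\bm{f}(\bm{0})=\bm{0}$ again forces $|\bm{Z}_n|\ge 1$ a.s., whence
\[
f^{(i)}_n(\mathrm{e}^{-\bm{1}/c_n})\le\mathrm{e}^{-1/c_n}\le\mathrm{e}^{-1/c_{n_0}}=:\mathrm{e}^{-\delta_2},
\]
and $\delta:=\min(\delta_1,\delta_2)>0$ works uniformly.

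Combining the two displays, multiplying by $\mathrm{e}^{|\bm{k}|/c_n}$, and absorbing $A(\bm{1})$ into a single constant $A$, gives
\[
c_n^d\,\mbb{P}(\bm{Z}_n=\bm{k}\mid\bm{Z}_0=\bm{l})\le A\,|\bm{l}|^{-d/2}\,\mathrm{e}^{-\delta|\bm{l}|+|\bm{k}|/c_n},
\]
as required. The only genuine delicate point is the uniform-in-$n$ exponential decay of $f^{(i)}_n(\mathrm{e}^{-\bm{1}/c_n})^{\bm{l}}$; everything else is plugging earlier lemmas into \reff{shn1}.
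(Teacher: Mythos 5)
Your proof is correct and follows essentially the same route as the paper's: both apply the Cram\'er identity \reff{shn1} with $\bm{h}=\bm{1}$, bound the concentration factor via Lemma~\ref{lem3}, and then extract uniform geometric decay in $|\bm{l}|$ from the product $(\bm{f}_n(\mathrm{e}^{-\bm{1}/c_n}))^{\bm{l}}$ using the global limit theorem. The only difference is that you spell out why $\sup_{n\ge1}f^{(i)}_n(\mathrm{e}^{-\bm{1}/c_n})<1$, splitting into large $n$ (convergence to $\mbb{E}_i\mathrm{e}^{-W}<1$) and small $n$ (using $\bm{f}(\bm{0})=\bm{0}$ to force $|\bm{Z}_n|\ge1$), which the paper leaves implicit; this is a welcome clarification rather than a departure.
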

\begin{proof}
Multiplying both side of equation (\ref{shn1}) by $c^d_n$, we obtain
\begin{equation}\label{emm1}
c^d_n\mbb{P}(\bm{Z}_n=\bm{k}|\bm{Z}_0=\bn{l})\le \mathrm{e}^{\bn{h}\cdot\bn{k}/c_n}\left(\bm{f}_n(\mathrm{e}^{-\bn{h}/c_n})\right)^{\bn{l}}\left(\max_{n\geq1,\bn{k}>\textbf{0}}c^d_n\mbb{P}(\bm{S}_{\bn{l}}(\bm{h},n)=\bm{k})\right).	
\end{equation}
Setting $\bm{h}=\bm{1}$ and applying Lemma \ref{lem3} to bound the concentration probability term, we get
\[
c^d_n\mbb{P}(\bm{Z}_n=\bm{k}|\bm{Z}_0=\bn{l})\le A(\bm{1})|\bm{l}|^{-d/2}\mathrm{e}^{|\bn{k}|/c_n}\left(\bm{f}_n(\mathrm{e}^{-1/c_n}\bm{1})\right)^{\bn{l}}.\]
By global limit theorem, for $1\le i \le d$, there exists $\delta_i>0$ such that $$\sup_nf^{(i)}_n(\mathrm{e}^{-1/c_n}\bm{1})<\mathrm{e}^{-\delta_i}.$$ Therefore,
\[
c^d_n\mbb{P}(\bm{Z}_n=\bm{k}|\bm{Z}_0=\bn{l})\le A(\bm{1})|\bm{l}|^{-d/2}\mathrm{e}^{|\bn{k}|/c_n}\mathrm{e}^{-\sum_{i=1}^d\delta_il^{(i)}}.\]
The Lemma thus follows by taking $\delta=\min_i\delta_i$. 
\end{proof}
\subsection{ A local limit theorem on $w(\cdot)$} 
Recall that for $\bm{k}\in\mbb{Z}_+^d$, its normalized distance with $\bm{v}$ is defined as
\[
\varepsilon_v(\bm{k})=\left\Vert\frac{\bm{k}}{|\bm{k}|}-\bm{v}\right\Vert.
\]
The following lemma demonstrates how the local limit theorem in the multi-type case differs from the laws (see \cite[Lemma 9]{local}) observed in the single-type case.
\begin{lem}\label{auxlem5}
Assume $\bm{x}_n\in\mbb{R}^d$ and $\sup_n|\bm{x}_n|<\infty$. For any $1\le i \le d $ and $\bm{l}\in\mbb{Z}_+^d$, if $\epsilon_v(\bm{x}_n)\to0~\text{as}~n\to\infty$,  we have
\begin{align*}
	\limsup_{n\to\infty}	\left|\frac{1}{(2\pi)^d}\int_{[-\pi c_n,\pi c_n]^d}\left(\bm{f}_n(\mathrm{e}^{\mathrm{i}\textbf{t}/c_n})\right)^{\textbf{l}}\mathrm{e}^{-\mathrm{i}\textbf{t}\cdot\textbf{x}_n}\mathrm{d}\bm{t}-\bm{w}^{\ast\textbf{l}}(|\bm{x}_n|)\right|=0,
\end{align*}
where $\bm{w}^{\ast\textbf{l}}(k):=w_{1}^{\ast l^{(1)}}(k)\ast w_{2}^{\ast l^{(2)}}(k)\cdots w_{d}^{\ast l^{(d)}}(k).$ If else, the integral decays faster than the convolution term:
\[
\int_{[-\pi c_n,\pi c_n]^d}\left(\bm{f}_n(\mathrm{e}^{\mathrm{i}\textbf{t}/c_n})\right)^{\textbf{l}}\mathrm{e}^{-\mathrm{i}\textbf{t}\cdot\textbf{x}}\mathrm{d}\bm{t}=\mathrm{o}(\bm{w}^{\ast\textbf{l}}(|\bm{x}_n|)).\]
\end{lem}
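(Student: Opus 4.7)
The plan is a multidimensional Fourier-analytic local-limit argument, adapting the single-type proof of \cite[Lemma 9]{local} to the degenerate multi-type setting. The key identification is that $(\bm{f}_n(e^{\ii\bm{t}/c_n}))^{\bm{l}}$ is the characteristic function of $\bm{Z}_n/c_n$ started from $\bm{Z}_0=\bm{l}$, and by the global limit theorem \reff{cn} together with bounded convergence, for each fixed $\bm{t}\in\mathbb{R}^d$ it converges pointwise to $\phi_{W_{\bm{l}}}(\bm{t}\cdot\bm{v})$, where $W_{\bm{l}}:=\sum_{j=1}^d\sum_{k=1}^{l^{(j)}}W_{j,k}$ with the $W_{j,k}$ independent and $W_{j,k}$ of density $w_j$. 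Consequently $W_{\bm{l}}$ has density $\bm{w}^{*\bm{l}}$, and Fourier inversion in one dimension reads $\bm{w}^{*\bm{l}}(r)=\frac{1}{2\pi}\int_{\mathbb{R}}\phi_{W_{\bm{l}}}(s)e^{-\ii sr}\,ds$; the whole game is to recover this 1D inversion from the $d$-dimensional integral $I_n$.

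\textbf{Key steps.} First I would partition $[-\pi c_n,\pi c_n]^d$ into a central ball $\{\|\bm{t}\|\le R\}$ with $R$ large and fixed, an annulus $\{R\le\|\bm{t}\|\le\delta c_n\}$ with $\delta$ small, and a peripheral cap $\{\delta c_n\le\|\bm{t}\|_\infty\le\pi c_n\}$, and treat each piece separately. On the central ball I would pass to the limit by dominated convergence, arriving at the integral of $\phi_{W_{\bm{l}}}(\bm{t}\cdot\bm{v})e^{-\ii\bm{t}\cdot\bm{x}_n}$, and then decompose $\bm{t}=\alpha\bm{v}+\bm{t}^\perp$ with $\bm{t}^\perp\cdot\bm{v}=0$. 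The limit integrand depends on $\bm{t}$ only through $\alpha$, while the phase splits as $\bm{t}\cdot\bm{x}_n=\alpha(\bm{v}\cdot\bm{x}_n)+\bm{t}^\perp\cdot\bm{x}_n^\perp$, where $\bm{x}_n^\perp$ is the component of $\bm{x}_n$ perpendicular to $\bm{v}$. The hypothesis $\epsilon_v(\bm{x}_n)\to 0$ combined with $\sup_n|\bm{x}_n|<\infty$ forces $\bm{x}_n^\perp\to\bm{0}$ in $\ell_2$, so the perpendicular Fourier integrand tends to unity and, after the appropriate change of variable together with the Jacobian factor $\|\bm{v}\|_2$, the parallel integral reproduces exactly $\bm{w}^{*\bm{l}}(|\bm{x}_n|)$ upon taking $R\to\infty$ after $n\to\infty$.

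\textbf{Tail estimates and the alternative case.} On the peripheral cap Lemma \ref{lem1} yields the uniform bound $|(\bm{f}_n(e^{\ii\bm{t}/c_n}))^{\bm{l}}|\le\theta^{|\bm{l}|}<1$, and on the annulus Lemma \ref{lem2} restricted to the shells $\bm{t}/c_n\in J_j$ yields $|(\bm{f}_n(e^{\ii\bm{t}/c_n}))^{\bm{l}}|\le C\gamma^{|\bm{l}|(n-j+1)}$. The raw shell-volume-times-decay product has order $(\rho^d\gamma^{|\bm{l}|})^{n-j+1}$, which is only summable when $\bm{l}$ is large enough that $\rho^d\gamma^{|\bm{l}|}<1$; to cover arbitrary $\bm{l}$ I would supplement by integrating by parts against $e^{-\ii\bm{t}\cdot\bm{x}_n}$ in a direction transverse to $\bm{v}$ (where the limit characteristic function is constant), gaining a factor $1/\|\bm{t}\|$ per iteration and eventually dominating the shell volume after enough steps. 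For the second statement of the lemma, when $\epsilon_v(\bm{x}_n)\not\to 0$ the component $\bm{x}_n^\perp$ stays bounded away from zero, so the perpendicular oscillation $e^{-\ii\bm{t}^\perp\cdot\bm{x}_n^\perp}$ in the central piece no longer collapses and a Riemann--Lebesgue-type estimate yields decay strictly faster than the $O(1)$ quantity $\bm{w}^{*\bm{l}}(|\bm{x}_n|)$.

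\textbf{Main obstacle.} The hard part is the degeneracy of the limit distribution $\bm{v}W_{\bm{l}}$ on the one-dimensional ray $\{t\bm{v}:t\ge 0\}$: the formal $d$-dimensional inverse Fourier transform of $\phi_{W_{\bm{l}}}(\bm{t}\cdot\bm{v})$ is not absolutely convergent, and letting $R\to\infty$ naively produces a singular expression involving a $(d-1)$-dimensional delta in the perpendicular direction. The resolution requires carefully synchronizing the finite-$c_n$ truncation with the rate-sensitive alignment condition $\epsilon_v(\bm{x}_n)\to 0$ so that the perpendicular Fourier integral collapses to precisely the constant needed to match the $c_n^d$ normalization and the Jacobian of the change of variables, leaving the one-dimensional inversion formula for $\bm{w}^{*\bm{l}}(|\bm{x}_n|)$. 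A secondary technical difficulty is that the decay from Lemmas \ref{lem1}--\ref{lem2} alone does not dominate the tail volumes when $\bm{l}$ is small, so oscillation-based integration by parts must be carried out uniformly in $\bm{l}$.
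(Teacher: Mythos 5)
Your proposal follows the same Fourier--inversion strategy as the paper: both recognize $\left(\bm{f}_n(\mathrm{e}^{\ii\bm{t}/c_n})\right)^{\bm{l}}$ as the characteristic function of $\bm{Z}_n$ started from $\bm{l}$, pass to the pointwise limit $\mathbb{E}(\mathrm{e}^{\ii\bm{t}\cdot\bm{v}W}\mid\bm{Z}_0=\bm{l})$, and try to read off the one-dimensional convolution density from the ray-supported limit. You are right to single out the degeneracy of $\bm{v}W$ as the central difficulty; the paper buries exactly the same difficulty in equation (\ref{ems3}), where the displayed $d$-dimensional integral is not absolutely convergent and the subsequent identification of $T_0$ with a pointwise density ignores that the law of $\bm{v}W$ is singular with respect to Lebesgue measure on $\mathbb{R}^d$. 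But the resolution you sketch has a concrete step that would fail. After passing to the limit on a fixed ball $\Vert\bm{t}\Vert\le R$ and sending $\bm{x}_n^\perp\to\bm{0}$, the limiting integrand is constant in the $(d-1)$ directions orthogonal to $\bm{v}$, so the $\bm{t}^\perp$-integration contributes a cross-sectional volume factor of order $R^{d-1}$; the central-ball piece therefore behaves like $R^{d-1}\cdot\bm{w}^{\ast\bm{l}}(\cdot)$ and diverges when you take $R\to\infty$ after $n\to\infty$, rather than reproducing the finite target. For the argument to close, the annulus $\{R\le\Vert\bm{t}\Vert\le\delta c_n\}$ would have to offset this growth exactly, and nothing in your outline supplies that cancellation: the Lemma~\ref{lem2} shell estimates are absolute-value bounds and can only add.

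The secondary patch --- integrating by parts transverse to $\bm{v}$ to handle small $|\bm{l}|$ --- is also incorrect as stated. Integrating $\mathrm{e}^{-\ii\bm{t}\cdot\bm{x}_n}$ by parts in a direction orthogonal to $\bm{v}$ gains a factor $1/|\bm{x}_n^\perp|$ per step, not $1/\Vert\bm{t}\Vert$; since your standing hypothesis $\varepsilon_v(\bm{x}_n)\to 0$ forces $\bm{x}_n^\perp\to\bm{0}$, this makes the bound worse, not better. Moreover, each differentiation of $\left(\bm{f}_n(\mathrm{e}^{\ii\bm{t}/c_n})\right)^{\bm{l}}$ produces a factor $c_n^{-1}\cdot\partial_{s}f_n(\cdot)$ that is $O(1)$ uniformly over the shells, so the derivatives do not contribute extra decay in $\Vert\bm{t}\Vert$. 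The summability you need on the annulus genuinely requires $\gamma^{|\bm{l}|}\rho^d<1$ (compare the hypothesis of Lemma~\ref{lem3}), and neither your sketch nor the paper's brief appeal to ``a similar approach'' for uniform boundedness of the integral sequence supplies a rigorous substitute when $|\bm{l}|$ is small.
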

\begin{proof}
By a similar approach to the proof of \cite[Lemma 9]{local}, we can establish the uniform boundedness of the integral sequence:
\[
\sup_{n\geq0}	\left|\int_{[-\pi c_n,\pi c_n]^d}\left(\bm{f}_n(\mathrm{e}^{\mathrm{i}\bn{t}/c_n})\right)^{\bn{l}}\mathrm{e}^{-\mathrm{i}\bn{t}\cdot\bn{x}_n}\mathrm{d}\bm{t}\right|<\infty. \]
Therefore, from (\ref{cn}) and dominated convergence theorem, we obtain
\begin{equation}\label{ems3}
\limsup_{n\to\infty}	\left|\int_{[-\pi c_n,\pi c_n]^d}\left(\bm{f}_n(\mathrm{e}^{\mathrm{i}\bn{t}/c_n})\right)^{\bn{l}}\mathrm{e}^{-\mathrm{i}\bn{t}\cdot\bn{x}_n}\mathrm{d}\bm{t}-\int_{\mathbb{R}^d}\mathbb{E}(\mathrm{e}^{\mathrm{i}\bn{t}\cdot\bn{v}W}|\bm{Z}_0=\bm{l})\mathrm{e}^{-\mathrm{i}\bn{t}\cdot\bn{v}|\bn{x}_n|}\mathrm{d}\bm{t}\right|=0.\end{equation}
Define a vector function related to the subtracted term in the left-hand side of the above expression multiplying $(2\pi)^{-d}$: \begin{align}\label{skm2}
	T_0(\bm{x})&:=\frac{1}{(2\pi)^d}\int_{\mathbb{R}^d}\mathbb{E}(\mathrm{e}^{\mathrm{i}\bn{t}\cdot\bn{v}W}|\bm{Z}_0=\bm{l})\mathrm{e}^{-\mathrm{i}\bn{t}\cdot\bn{x}}\mathrm{d}\bm{t}\\
&=	\int_{\mathbb{R}^d}\mathbb{E}(\mathrm{e}^{2\pi\mathrm{i}\bn{t}\cdot\bn{v}W}|\bm{Z}_0=\bm{l})\mathrm{e}^{-2\pi\mathrm{i}\bn{t}\cdot\bn{x}}\mathrm{d}\bm{t}.\nonumber	\end{align}
Using $d$-dimensional inverse Fourier transform theorem yields
\begin{align*}
\mathbb{E}(\mathrm{e}^{2\pi\mathrm{i}\bn{t}\cdot\bn{v}W}|\bm{Z}_0&=\bm{l})=\int_{\mathbb{R}^d}T_0(\bm{x})\mathrm{e}^{2\pi\mathrm{i}\bn{t}\cdot\bn{x}}\mathrm{d}\bm{x}. 	\end{align*}
Thus, $T_0(\bm{x})$ coincides with the conditional probability density $T_0(\bm{x})=\mbb{P}(\bm{v}W= \bm{x}|\bm{Z}_0=\bn{l})$. By branching property, $T_0(\bm{x})$ is non-zero only when $\bm{x}$ aligns with the scaling direction:
\begin{align*}
	T_0(\bm{x})=\begin{cases}
		w_{1}^{\ast l^{(1)}}(k)\ast w_{2}^{\ast l^{(2)}}(k)\cdots w_{d}^{\ast l^{(d)}}(k),~&\text{if}~\bm{x}=k\bm{v}~\text{for some}~k>0,\\
		0, &\text{if else.}
	\end{cases}
\end{align*}
Combining (\ref{ems3}) we get the desired lemma.
\end{proof}
\begin{lem}\label{auxlem6}
	For any $\bm{l}$ satisfying $\gamma^{|\textbf{l}|}\rho^d<1$, there exists $A_0>0$ such that
	\[
\bm{w}^{\ast\textbf{l}}(x)\le A_0\prod_{i=1}^d\left(\int_{0}^{x}w_{i}(t)\mathrm{d}t\right)^{l^{(i)}},~x>0.	\]
\end{lem}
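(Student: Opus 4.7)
The approach is induction on $|\bm{l}|$. The inductive step is essentially free: setting $\bm{l}' = \bm{l} + \bm{e}_k$ and $F_i(x) := \int_0^x w_i(t)\,\mathrm{d}t$, write
\[
\bm{w}^{\ast\bm{l}'}(x) = \int_0^x w_k(y)\,\bm{w}^{\ast\bm{l}}(x-y)\,\mathrm{d}y,
\]
and combine the inductive hypothesis with the monotonicity $F_i(x-y) \le F_i(x)$ to obtain $\bm{w}^{\ast\bm{l}'}(x) \le A_0\, F_k(x)\prod_i F_i(x)^{l^{(i)}} = A_0\prod_i F_i(x)^{(l')^{(i)}}$ with the \emph{same} constant $A_0$. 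Since $\gamma<1<\rho$, the condition $\gamma^{|\bm{l}|}\rho^d < 1$ is preserved under increasing any coordinate of $\bm{l}$, so the problem reduces to establishing the bound for each of the finitely many $\bm{l}_0$ lying on the threshold $|\bm{l}_0| = L_0 := \min\{k:\gamma^k\rho^d<1\}$.

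For each base case, I would combine two ingredients. First, Lemma~\ref{lem3} applied with $\bm{h}=\bm{0}$ gives $\sup_{n,\bm{k}>\bm{0}} c_n^d\mathbb{P}(\bm{S}_{\bm{l}_0}(\bm{0}, n) = \bm{k}) < \infty$; passing $n \to \infty$ via a local limit argument in the spirit of Lemma~\ref{auxlem5} then yields $\|\bm{w}^{\ast\bm{l}_0}\|_\infty < \infty$. Second, $\prod_i F_i(x)^{l_0^{(i)}}$ is continuous and strictly positive on $(0,\infty)$, so on any interval $[\varepsilon,\infty)$ the denominator is bounded below by a positive constant, and the ratio $\bm{w}^{\ast\bm{l}_0}(x)/\prod_i F_i(x)^{l_0^{(i)}}$ is controlled.

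The hard part will be the behavior near $x = 0$. The Schr\"oder asymptotics $w_i(x) \asymp x^{\alpha-1}$ and $F_i(x) \asymp x^\alpha$ (with $\alpha = -\log\gamma/\log\rho$) give $\bm{w}^{\ast\bm{l}_0}(x) \asymp x^{\alpha|\bm{l}_0|-1}$ against $\prod_i F_i(x)^{l_0^{(i)}} \asymp x^{\alpha|\bm{l}_0|}$, producing a naive ratio $\asymp x^{-1}$; the threshold condition $\alpha|\bm{l}_0| > d$ does not by itself close this one-power gap. Closing it will require a finer analysis at the origin, using the multi-type Schr\"oder functional equation $\bm{Q}(\bm{f}(\bm{s})) = \gamma\bm{Q}(\bm{s})$ to pin down near-origin prefactors (and any cancellations coming from the eigenvector structure $\bm{v},\bm{u}$), or exploiting that in the applications to Theorem~\ref{thm1} one only ever evaluates $\bm{w}^{\ast\bm{l}}$ at $x=|\bm{k}|/c_{a_k}\in[1/\rho,1]$, so that $x$ is bounded below and the inductive step can be reorganized to avoid feeding the near-$0$ region back into itself. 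An alternative bypass would be a direct Fourier-inversion bound on $\bm{w}^{\ast\bm{l}}$ via $\prod_i \phi_i(t)^{l^{(i)}}$, using $\gamma^{|\bm{l}|}\rho^d<1$ (as in the proof of Lemma~\ref{lem3}) to guarantee integrability, and then comparing the resulting bound to $\prod_i F_i(x)^{l^{(i)}}$ directly rather than through induction.
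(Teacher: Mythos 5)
Your reading of the lemma's structure is accurate, and your diagnosis of the near-origin difficulty is the key point. The paper's own argument for the base case is in fact your third suggested bypass: writing $\bm{w}^{\ast\bm{l}_0}$ as a Fourier integral in the characteristic function $\boldsymbol{\varphi}$ of $W$, it uses the Schr\"oder-type functional relation $\varphi^{(i)}(\rho x)=f^{(i)}(\boldsymbol{\varphi}(x))$ together with $\bm{f}_k(\bm{s})\asymp\gamma^k$ to dominate the contribution from the frequency shell $\{\rho^{k-1}\le\Vert\bm{t}\Vert\le\rho^{k}\}$ by $C(\rho^d\gamma^{|\bm{l}_0|})^k$, sums the geometric series under $\gamma^{|\bm{l}_0|}\rho^d<1$, and thus obtains $\sup_{x>0}\bm{w}^{\ast\bm{l}_0}(x)<\infty$. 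It then runs exactly the convolution induction you describe. This direct route is a bit cleaner than your first ingredient (Lemma~\ref{lem3} with $\bm{h}=\bm{0}$ followed by a passage $n\to\infty$), since promoting the uniform prelimit bound $c_n^d\,\mathbb{P}(\bm{S}_{\bm{l}_0}(\bm{0},n)=\bm{k})\le C$ to $\|\bm{w}^{\ast\bm{l}_0}\|_\infty<\infty$ needs an extra local-limit step, though the two routes are morally the same calculation.

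Your ``hard part'' is not really a gap in your argument but a gap in the stated lemma. As you compute, near $x=0$ one has $\bm{w}^{\ast\bm{l}_0}(x)\asymp x^{\alpha|\bm{l}_0|-1}$ while $\prod_iF_i(x)^{l_0^{(i)}}\asymp x^{\alpha|\bm{l}_0|}$, so the ratio blows up like $x^{-1}$ and no finite $A_0$ can make the displayed inequality hold on all of $(0,\infty)$. What the paper's induction, started from the sup bound on $\bm{w}^{\ast\bm{l}_0}$, actually yields is
\[
\bm{w}^{\ast\bm{l}}(x)\le A_0\prod_{i=1}^dF_i(x)^{l^{(i)}-l_0^{(i)}},\qquad \bm{l}\ge\bm{l}_0,
\]
with the exponent shifted by the fixed threshold $\bm{l}_0$. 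That weaker form is precisely what is used in the proof of Theorem~\ref{thm1}: the lemma is invoked only at $x=|\bm{k}|/c_{a_k}\in[1/\rho,1]$, where $F_i(x)\le F_i(1)<1$, so the shift by the fixed $\bm{l}_0$ costs only a constant independent of $\bm{l}$ and one still gets the geometric decay $\le C\eta^{|\bm{l}|}$ needed in \reff{012}. So your instinct to reorganize around $x$ bounded away from $0$ is the correct resolution; the ``finer analysis at the origin'' and ``eigenvector cancellation'' routes you float cannot recover the missing power of $x$, because it is genuinely absent.
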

\begin{proof}
The proof follows a strategy analogous to \cite[Lemma 13]{lower}, with adaptations for the multi-type setting. We provide full details for completeness. Let $\boldsymbol{\varphi}=\boldsymbol{\varphi}_W$ denote the characteristic function of the limit variable $W$,  where
 $$\varphi_W^{(i)}(x)=\mathbb{E}_is^{\mathrm{i}xW},~~1\le i \le d.$$ By (\ref{cn}) and a continuity argument exactly analogous to the one-dimensional case, we have
\begin{align}\label{skm1}
	\varphi^{(i)}(\rho x)&=\lim_{n\to\infty}f_{n+1}^{(i)}(\mathrm{e}^{-\mathrm{i}\rho x/c_{n+1}}\bm{1})\nonumber\\
	&=f^{(i)}\left(\lim_{n\to\infty}f_{n}^{(i)}(\mathrm{e}^{-\mathrm{i} (\rho c_n/c_{n+1})x/c_{n}}\bm{1})\right)\nonumber\\
	&=f^{(i)}(\boldsymbol{\varphi}(x)).
\end{align}
Recalling that by (\ref{skm2}), for any $x>0$, the multi-type convolution density satisfies
\begin{align}
\bm{w}^{\ast\bn{l}}(x)&=\frac{1}{(2\pi)^d}\int_{\mathbb{R}^d}(\boldsymbol{\varphi}(\bm{v}\cdot\bm{t}))^{\bn{l}}\mathrm{e}^{-\mathrm{i}\bn{t}\cdot\bn{v}x}\mathrm{d}\bm{t}	\nonumber\\
&\le \frac{1}{(2\pi)^d}\int_{\mathbb{R}^d}\left|(\boldsymbol{\varphi}(\bm{v}\cdot\bm{t}))^{\bn{l}}\right|\mathrm{d}\bm{t}\label{esm3}\end{align}
Next, we prove the boundness of above equation.	Decompose $\mathbb{R}^d$ into
$
 \mathbb{R}^d=\bigcup_{k=0}^{\infty}I_k
 $, where
 $$
 I_0=\{\bm{x}\in \mathbb{R}^d:\vt\bm{x}\vt\le1\},~I_k=\{\bm{x}\in \mathbb{R}^d:\rho^{k-1}\le\vt\bm{x}\vt\le\rho^{k}\},~k\geq1.
 $$
 Therefore, for $k\ge1$, by (\ref{skm1}),
 \begin{align*}
\int_{I_{k+1}}\left|(\boldsymbol{\varphi}(\bm{v}\cdot\bm{t}))^{\bn{l}_0}\right|\mathrm{d}\bm{t}=\rho^{kd}\int_{I_1}\left|(\boldsymbol{\varphi}(\bm{v}\cdot\bm{t}\rho^k))^{\bn{l}}\right|\mathrm{d}\bm{t} 
=\rho^{kd}\int_{I_1}\left|(\bm{f}_k(\boldsymbol{\varphi}(\bm{v}\cdot\bm{t})))^{\bn{l}}\right|\mathrm{d}\bm{t}.
\end{align*}
Since $W>0$ has a positive continuous density, $|\varphi^{(i)}(\bm{v}\cdot\bm{t})|\le C<1$ uniformly for $1\le i\le d$ and $\bm{t}\in I_1$. By (\ref{en1}), this implies
\[
\int_{I_{k+1}}\left|(\boldsymbol{\varphi}(\bm{v}\cdot\bm{t}))^{\bn{l}_0}\right|\mathrm{d}\bm{t}\le C\cdot\rho^{kd}\gamma^{k|\bn{l}|}=C\cdot(\rho^d\gamma^{|\bn{l}|})^k.\]
Substituting this bound into (\ref{esm3}), we have $\sup_{x>0}\bm{w}^{\ast\bn{l}}(x)<\infty$ for any $\bm{l}\in\mbb{Z}_+^d$ satisfying $\rho^d\gamma^{|\bn{l}|}<1$. \par 
Finally, using the recursive structure of the convolution $w_i^{\ast l^{(i)}}(x)=\int_{0}^{x}w_i^{\ast (l^{(i)}-1)}(x-y)w_i(y)dy$, the lemma follows by induction on $\bm{l}$.
\end{proof}
\subsection{ Proof of Theorem \ref{thm1}} 

\textbf{Proof of equation (\ref{teq1}):}

Recall that $a_k=a(\bm{k})=\min\{l\geq1: c_l\ge |\bm{k}|\}$ and $a'_k=a(k\bm{1})$. By Markov property, we derive:
\[
\mbb{P}_i(\bm{Z}_{a_k+j}=\bn{k})=\sum_{\bn{l}>\textbf{0}}\mbb{P}_i(\bm{Z}_j=\bm{l})\mbb{P}(\bm{Z}_{a_k}=\bm{k}|\bm{Z}_0=\bm{l}).\]
and
\begin{equation}\label{em3}
\mbb{P}_i(|\bm{Z}_{a'_k+j}|\le k)=\sum_{\bn{l}>\textbf{0}}\mbb{P}_i(\bm{Z}_j=\bm{l})\mbb{P}(|\bm{Z}_{a'_k}|\le k|\bm{Z}_0=\bm{l}).
\end{equation}
Using the estimate in Lemma \ref{lem4}, for $N$ sufficiently large and $|\bm{l}|\geq N$, we have the bound: 
\begin{align*}
c^d_{a_k}\sum_{|\bn{l}|\geq N}\mbb{P}_i(\bm{Z}_j=\bm{l})\mbb{P}(\bm{Z}_{a_k}=\bm{k}|\bm{Z}_0=\bm{l})&\leq C\cdot\sum_{|\bn{l}|\geq N}|\bn{l}|^{-d/2}\mathrm{e}^{|\bn{k}|/c_{a_k}}\mathrm{e}^{-\delta|\bn{l}|}\mbb{P}_i(\bm{Z}_j=\bm{l})\\
&\le C\cdot N^{-d/2}\mathrm{e}^{|\bn{k}|/c_{a_k}}f^{(i)}_j(\mathrm{e}^{-\delta}\bm{1}).
\end{align*}
From the definition of $a_k$ and the assumption (\ref{cn1}) on $c_n$, we get the estimate:
\begin{equation}\label{em1}
1/\rho\le\frac{c_{a_k-1}}{c_{a_k}}\le\frac{|\bm{k}|}{c_{a_k}}\le1.\end{equation}
By the decay rate of $\bm{f}_n$ in (\ref{en1}), we have $f^{(i)}_j(\mathrm{e}^{-\delta}\bm{1})\le C \cdot\gamma^j$. Hence,
\begin{equation}\label{013}
\gamma^{-j}c^d_{a_k}\sum_{|\bn{l}|\geq N}\mbb{P}_i(\bm{Z}_j=\bm{l})\mbb{P}(\bm{Z}_{a_k}=\bm{k}|\bm{Z}_0=\bm{l})\le C\cdot N^{-d/2}.\end{equation}
Next, observe that the generating function has the following decomposition:
\[
\left(\bm{f}_n(\mathrm{e}^{2\pi\mathrm{i}\bn{t}})\right)^{\bn{l}}=\mbb{E}(\mathrm{e}^{2\pi\mathrm{i}\bn{t}\cdot\bn{Z}_n}|\bm{Z}_0=\bm{l})=\int_{\mbb{R}^d}\mbb{P}(\bm{Z}_n=\bm{k}|\bm{Z}_0=\bm{l})\mathrm{e}^{2\pi\mathrm{i}\bn{t}\cdot\bn{k}}\mathrm{d}\bm{k}.\]
Using $d$-dimensional inverse Fourier transform formula, we derive
\begin{align*}
\mbb{P}(\bm{Z}_n=\bm{k}|\bm{Z}_0=\bm{l})&=\frac{1}{(2\pi c_n)^d}\int_{[-c_n\pi,c_n\pi]^d}\left(\bm{f}_n(\mathrm{e}^{\mathrm{i}\bn{t}/c_n})\right)^{\bn{l}}\mathrm{e}^{-\mathrm{i}\bn{t}\cdot\bn{k}/c_n}\mathrm{d}\bm{t}.
\end{align*} 
Setting $n=a_k$, this becomes
\[
c_{a_k}^d\mbb{P}(\bm{Z}_{a_k}=\bm{k}|\bm{Z}_0=\bm{l})=\frac{1}{(2\pi )^d}\int_{[-c_{a_k}\pi,c_{a_k}\pi]^d}\left(\bm{f}_{a_k}(\mathrm{e}^{\mathrm{i}\bn{t}/c_{a_k}})\right)^{\bn{l}}\mathrm{e}^{-\mathrm{i}\bn{t}\cdot\bn{k}/c_{a_k}}\mathrm{d}\bm{t}.\]
Finally, by Lemma \ref{auxlem5}, for $\bm{k}\in\mbb{Z}_+^d$ and $\tilde{\bm{k}}\in\mbb{R}_+^d$,
\[
\sup_{\bn{k}\hspace{0.2em}\succeq_{\bn{v}}\hspace{0.2em}\tilde{\bn{k}}}\left|c_{a_k}^d\mbb{P}(\bm{Z}_{a_k}=\bm{k}|\bm{Z}_0=\bm{l})-\bm{w}^{\ast\bn{l}}(|\bm{k}|/c_{a_{k}})\right|\rightarrow 0,~\text{as}~\tilde{\bn{k}}\overset{\bn{v}}{\longrightarrow}\infty.\]
Therefore, using this relation we can estimate the first $N-1$ terms as follows:
\begin{equation}\label{011}
\sum_{|\bn{l}|\le N-1}\mbb{P}_i(\bm{Z}_j=\bm{l})\mbb{P}(\bm{Z}_{a_k}=\bm{k}|\bm{Z}_0=\bm{l})=c^{-d}_{a_{k}}\left[\sum_{|\bn{l}|\le N-1}\mbb{P}_i(\bm{Z}_j=\bm{l})\bm{w}^{\ast\bn{l}}(|\bm{k}|/c_{a_k})\right](1+\epsilon_{N,\bn{k}}),\end{equation}
where \begin{equation}\label{ekm3}\sup_{\bn{k}\hspace{0.2em}\succeq_{\bn{v}}\hspace{0.2em}\tilde{\bn{k}}}|\epsilon_{N,\bn{k}}|\to0,
~\text{as}~\tilde{\bn{k}}\overset{\bn{v}}{\longrightarrow}\infty.\end{equation} On the other hand, by Lemma \ref{auxlem6}, we bound the tail terms as follows:
\begin{align}
\sum_{|\bn{l}|\ge N}\mbb{P}_i(\bm{Z}_j=\bm{l})\bm{w}^{\ast\bn{l}}(|\bm{k}|/c_{a_k})&\le C\cdot\sum_{|\bn{l}|\ge N}\mbb{P}_i(\bm{Z}_j=\bm{l})\eta^{|\bn{l}|}\nonumber\\&\le C\cdot(\frac{\eta}{\eta_1})^Nf_j^{(i)}(\eta_1)\nonumber\\
&\le C\cdot \mathrm{e}^{-\delta N}\gamma^j,\label{012}
\end{align}
for some $\eta\in(0,1)$, $\delta>0$ and every $\eta_1\in(\eta,1)$. Combining (\ref{011}), (\ref{013}) and (\ref{012}), we derive
\begin{equation}\label{esm4}
\mbb{P}_i(\bm{Z}_{a_k+j}=\bn{k})=c^{-d}_{a_k}\left[\sum_{\bn{l}> \textbf{0}}\mbb{P}_i(\bm{Z}_j=\bm{l})\bm{w}^{\ast\bn{l}}(|\bm{k}|/c_{a_k})\right](1+\epsilon_{N,\bn{k}})+ \mathrm{O}(c^{-d}_{a_k}\gamma^{j}N^{-d/2}).\end{equation}
Using the functional equation $\boldsymbol{\varphi}(\rho^j x)=\bm{f}_j(\boldsymbol{\varphi}(x))$ and the decomposition of $\bm{f}_j$, we obtain
\begin{equation}\label{en3}
\sum_{\bn{l}> \textbf{0}}\mbb{P}_i(\bm{Z}_j=\bm{l})\bm{w}^{\ast\bn{l}}(|\bm{k}|/c_{a_k})=\rho^{-j}w_i(|\bm{k}|/(\rho^{j}c_{a_{k}})),~j\geq1.\end{equation}
From \cite[Corollary 6]{jones}, for any $1\le i \le d$, the density satisfies 
$$w_i(x)\sim C\cdot x^{\alpha-1},~\text{as}~x\to0,$$
where $\alpha=-\log\gamma/\log\rho$ is the Schr\"{o}der constant. Combining this with (\ref{em1}) yields
\[
\rho^{-j}w_i(|\bm{k}|/(\rho^{j}c_{a_{k}}))\ge C\cdot\rho^{-\alpha j}=C\cdot\gamma^j.\]
Thus, substituting into (\ref{esm4}), we get
\[\mbb{P}_i(\bm{Z}_{a_k+j}=\bn{k})=c^{-d}_{a_k}\rho^{-j}w_i(|\bm{k}|/(\rho^{j}c_{a_{k}}))(1+\epsilon_{N,\bn{k}}+\mathrm{O}(N^{-d/2})),\]
where $\epsilon_{N,\bn{k}}$ is defined in (\ref{ekm3}). Letting $\tilde{\bn{k}}\overset{\bn{v}}{\longrightarrow}\infty$ and then $N\to\infty$, we finish the proof.

\noindent\textbf{Proof of equation (\ref{teq2}):}

For independent positive random variable $X_1$ and $X_2$, it holds that $$\mbb{P}(X_1+X_2\le x)\le \mbb{P}(X_1\le x)\mbb{P}(X_2\le x).$$
Applying this inequality to the multi-type GW process, we derive
\[
\mbb{P}(|\bm{Z}_{a'_k}|\le k|\bm{Z}_0=\bm{l})\le \prod_{1\le i\le d}\left(\mbb{P}_i(|\bm{Z}_{a'_k}|\le k)\right)^{l^{(i)}}.\]
By global limit theorem (\ref{global}) and equation (\ref{em1}), for each type $i$,
\[
\mbb{P}_i(|\bm{Z}_{a'_k}|\le k)\le \mbb{P}_i(c_{a'_k}^{-1}|\bm{Z}_{a'_k}|\le 1)~\underset{k \to \infty}{\longrightarrow}~ \mbb{P}_i(0\le W \le 1).\]
Since $W>1$ has a positive density, there exists $\eta\in(0,1)$ such that for $k\geq N$,
\[
\mbb{P}(|\bm{Z}_{a'_k}|\le k|\bm{Z}_0=\bm{l})\le \eta^{|\bn{l}|}.\]
Following the approach in (\ref{012}), for $N$ sufficiently large, we have the bound:
\begin{equation}\label{em2}
\sum_{\bn{l}\ge N}\mbb{P}_i(\bm{Z}_j=\bm{l})\mbb{P}(|\bm{Z}_{a'_k}|\le k|\bm{Z}_0=\bm{l})\le C\cdot \mathrm{e}^{-\delta N}\gamma^j,~\delta>0.\end{equation}
\par Let $F^{\ast\bn{l}}(x)=\mbb{P}(W\le x|\bm{Z}_0=\bm{l})$ be the $\bm{l}$-fold convolution of $F(x)=\mbb{P}_i(W\le x)$. By global limit theorem and the continuity of $F$,
\[
\mbb{P}(|\bm{Z}_{a'_k}|\le c_{a'_k}x|\bm{Z}_0=\bm{l})\underset{k \to \infty}{\longrightarrow}F^{\ast\bn{l}}(x),~\text{uniformly in}~x\geq0. \]
Thus, 
\begin{equation}\label{em5}
\lim_{k\to\infty}\sup_{x\ge1}\left|\mbb{P}(|\bm{Z}_{a'_k}|\le x|\bm{Z}_0=\bm{l})-F^{\ast\bn{l}}(x/c_{a'_k})\right|=0.
\end{equation}
On the other hand, by the same argument to (\ref{em2}),
\begin{equation}\label{em4}
\sum_{\bn{l}\ge N}\mbb{P}_i(\bm{Z}_j=\bm{l})F^{\ast\bn{l}}(k/c_{a'_k})\le C\cdot \mathrm{e}^{-\delta N}\gamma^j.\end{equation}
Combining (\ref{em3}), (\ref{em2}), (\ref{em5}) and (\ref{em4}), we obtain
\begin{equation}\label{en2}
\mbb{P}_i(|\bm{Z}_{a'_k+j}|\le k)=\left[\sum_{\bn{l}>\textbf{0}}\mbb{P}_i(\bm{Z}_j=\bm{l})F^{\ast\bn{l}}(k/c_{a'_k})\right](1+\epsilon_{N,k})+
\mathrm{O}(\mathrm{e}^{-\delta N}\gamma^j),\end{equation}
where $\sup_{k\ge \tilde{k}}|\epsilon_{N,k}|\to0$ as $\tilde{k}\to\infty$ for fixed $N$. \par 
By (\ref{em1}), $k/c_{a_k}\geq\rho^{-1}$, ensuring $F^{\ast\bn{l}}(k/c_{a'_k})>0$ for all $\bm{l}=\bm{e}_i$. Together with the uniformly convergence for $\gamma^{n}\mbb{P}_i(\bm{Z}_n=\bm{e}_j)$ as $n\to\infty$, we derive  
\[
\mathrm{e}^{-\delta N}\gamma^j\le C\cdot \mathrm{e}^{-\delta N}\sum_{\bn{l}>\textbf{0}}\mbb{P}_i(\bm{Z}_j=\bm{l})F^{\ast\bn{l}}(k/c_{a'_k}).\]
Substituting into (\ref{en2}) gives
\[
\mbb{P}_i(|\bm{Z}_{a'_k+j}|\le k)=\left[\sum_{\bn{l}>\textbf{0}}\mbb{P}_i(\bm{Z}_j=\bm{l})F^{\ast\bn{l}}(k/c_{a'_k})\right](1+\epsilon_{N,k}+\mathrm{O}(\mathrm{e}^{-\delta N})).\]
Integrating both side of (\ref{en3}) yields
\[
\mbb{P}_i(W\le y/\rho^{j})=\sum_{\bn{l}>\textbf{0}}\mbb{P}_i(\bm{Z}_j=\bm{l})F^{\ast\bn{l}}(y),~y>0,~j\ge0.\]
Thus,
\[
\mbb{P}_i(|\bm{Z}_{a'_k+j}|\le k)=\mbb{P}_i(W\le \frac{k}{c_{a'_k}\rho^j})(1+\epsilon_{N,k}+\mathrm{O}(\mathrm{e}^{-\delta N})).\]
Letting $\tilde{k}\to\infty$ and then $ N\to\infty$, we obtain the desired result.
\subsection{ Proof of Theorem \ref{thm1.1}} \label{sec4}
The proof steps of Theorem \ref{thm1.1} are similar to those of Theorem \ref{thm1}, also relying on the method of Cram\'er transformation. Subsequently, I will introduce the necessary definitions, relations, and lemmas required for this proof. These conclusions all have counterparts in the proof of Theorem \ref{thm1}, and their proofs follow analogous reasoning to their corresponding versions. Therefore, we will omit the detailed proofs of these results. 
\par  For $1\le m \le d, n\ge1~\text{and}~ 0\le s\le 1$, define 
	\[
	f^{(i)}_{m,n}(s)=\mbb{E}_iZ_n^{(m)},~~\bm{f}_{m,n}(s)=(f^{(i)}_{m,n}(s))_{1\le i\le d}.\]
 One can prove that it satisfies:
	\[
	(\bm{f}_{m,n}(s))^{\bn{l}}=\mbb{E}(Z_n^{(m)}|\bm{Z}_0=\bm{l}),~\bm{l}\in\mbb{Z}_+^d,\]
	and
	\[
	\bm{f}_{m,n+1}(s)=\bm{f}\left(\bm{f}_{m,n}(s)\right), 0\le s\le 1,\]
	where $\bm{f}(\bm{s})$ is the offspring generating function for $\bm{Z}_1$ defined in \eqref{fs}.
\par 	For $h\ge0$, define the Cram\'er transformation $X_{i,m}(h,n)$ of $Z_n^{(m)}$ with density as:
	\begin{equation}\label{shn2n}
		\mbb{P}(X_{i,m}(h,n)=k)=\frac{\mathrm{e}^{-hk/c_n}}{f_{m,n}^{(i)}(\mathrm{e}^{-h/c_n})}\mbb{P}_i(Z^{(m)}_n=k).\end{equation}
Then, for any $t\ge0$, we have \begin{align}\label{ekmn}
		\mbb{E}(\mathrm{e}^{\textup{i}tX_{i,m}(h,n)})	
		=\frac{f_{m,n}^{(i)}(\mathrm{e}^{(\textup{i}t-h/c_n)})}{f_{m,n}^{(i)}(\mathrm{e}^{-h/c_n})}.
	\end{align}
Note that the global limit theorem (\ref{global}) implies that $$\lim_{n\to\infty}f_{m,n}^{(i)}(\mathrm{e}^{-h/c_n})=\mbb{E}_i\mathrm{e}^{-hv^{(m)}W},~1\le i,m \le d.$$
 Define \[S_{\bn{l},m}(h,n)=\sum_{i=1}^d\sum_{k=1}^{l^{(i)}}X_{i,m,k}(h,n),~~\bm{l}\in\mbb{Z}_+^d,\]
	where $\{X_{i,m,k}(h,n);k\geq1\}$ are i.i.d. copies of $X_{i,m}(h,n)$.
Then, we have \begin{align}\label{eshnn}
		\mbb{E}\mathrm{e}^{\textup{i}tS_{\bn{l},m}(h,n)}=\frac{\left(\bm{f}_{m,n}(\mathrm{e}^{-h/c_n+\textup{i}t})\right)^{\bn{l}}}{\left(\bm{f}_{m,n}(\mathrm{e}^{-h/c_n})\right)^{\bn{l}}}
	\end{align}
and  \begin{eqnarray}\label{shn1n}
		\mbb{P}(Z^{(m)}_n=k|\bm{Z}_0=\bn{l})=\mathrm{e}^{hk/c_n}\left(\bm{f}_{m,n}(\mathrm{e}^{-h/c_n})\right)^{\bn{l}}\mbb{P}(\bm{S}_{\bn{l},m}(h,n)=k).
	\end{eqnarray}
Define \begin{align*}
	&J'_{\epsilon}:=\{s\in\mbb{R}: s\in[\epsilon\pi ,\pi]\},~\epsilon\in(0,1)~~\text{and}\\
	&J'_j:=\{t\in\mbb{R}: \pi/c_j \le t\le \pi/c_{j-1} \},~~j\geq1.
\end{align*}
\begin{lemma}[Corresponds to Lemma \ref{lem1}]
	For any $1\le i,m \le d$, there exists $\theta'_i=\theta'_i(\epsilon)\in(0,1)$ such that
	\[f^{(i)}_{m,k}\left(\mathrm{e}^{-h/c_k+\mathrm{i} t/c_k}\right)\le \theta'_i,~~k\geq0,~h\ge 0,~t\in J'_{\epsilon}.\]
\end{lemma}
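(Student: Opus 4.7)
The plan is to follow the template of Lemma~\ref{lem1} line-by-line, substituting for the vector-valued test function its scalar counterpart
\[
g_{h,t}(x) := \exp\{(-h+\mathrm{i}t)x\}, \quad h\ge 0,\ t\in\mathbb{R},\ x\ge 0,
\]
which satisfies the same $(h+|t|)$-Lipschitz estimate on $\mathbb{R}_+$ by applying the mean value theorem to its real and imaginary parts. Hence, for any fixed $H<\infty$, the family $\mathcal{G}':=\{g_{h,t}:0\le h\le H,\ |t|\le \pi\}$ is uniformly bounded and equi-continuous on $\mathbb{R}_+$. The central identity
\[
f^{(i)}_{m,k}\bigl(\mathrm{e}^{-h/c_k+\mathrm{i}t/c_k}\bigr) = \mathbb{E}_i\, g_{h,t}\bigl(Z_k^{(m)}/c_k\bigr),
\]
combined with the component-wise a.s.\ convergence $c_k^{-1} Z_k^{(m)} \to v^{(m)}W$ inherited from \eqref{cn} and the equi-continuity above, yields, uniformly over $\mathcal{G}'$ and over $1\le i,m\le d$,
\[
f^{(i)}_{m,k}\bigl(\mathrm{e}^{-h/c_k+\mathrm{i}t/c_k}\bigr) \longrightarrow \mathbb{E}_i\exp\{(-h+\mathrm{i}t) v^{(m)}W\}\quad\text{as } k\to\infty.
\]

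Since $v^{(m)}>0$ and $W$ has a strictly positive continuous density on $(0,\infty)$, the modulus of the limit is strictly less than $1$ for every $t\ne 0$; continuity in $(h,t)$ together with the compactness of $\{0\le h\le H\}\times J'_\epsilon$ gives
\[
\sup_{0\le h\le H,\ t\in J'_\epsilon,\ 1\le i,m\le d}\bigl|\mathbb{E}_i\exp\{(-h+\mathrm{i}t) v^{(m)}W\}\bigr| < 1.
\]
Thus there exist $k_0$ and $\delta'_1\in(0,1)$ such that the desired bound $\le\delta'_1$ holds for all $k\ge k_0$, $0\le h\le H$ and $t\in J'_\epsilon$. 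The finitely many remaining indices $k<k_0$ are handled exactly as in Lemma~\ref{lem1}: aperiodicity of the multi-type process implies that each marginal $Z_k^{(m)}$ under $\mathbb{P}_i$ is non-degenerate and not supported on any sublattice whose dual meets $J'_\epsilon/c_k$, so its characteristic function is uniformly bounded away from $1$ on $J'_\epsilon$.

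Finally, the regime $h>H$ is dispatched by the same monotonicity device: for any fixed $h_0\in(0,H]$ one has
\[
\bigl|f^{(i)}_{m,k}(\mathrm{e}^{-h/c_k+\mathrm{i}t/c_k})\bigr| \le f^{(i)}_{m,k}(\mathrm{e}^{-h/c_k}) \le f^{(i)}_{m,k}(\mathrm{e}^{-h_0/c_k}),
\]
and the rightmost quantity converges to $\mathbb{E}_i\mathrm{e}^{-h_0 v^{(m)}W}<1$ by the global limit theorem, with the small-$k$ tail again handled individually. Taking the maximum of the three regime-wise bounds produces the constant $\theta'_i=\theta'_i(\epsilon)\in(0,1)$ announced in the lemma. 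The only genuinely new point compared with Lemma~\ref{lem1}, and the place I expect the main (though still mild) technical obstacle, is verifying aperiodicity of the one-dimensional marginal $Z_k^{(m)}$ for small $k$; this should follow from Condition (\textbf{C1}) together with the irreducibility of $\bm{M}$, but it calls for a short combinatorial argument on the $m$-th coordinate projection of the support of $\bm{Z}_k$ rather than being taken for granted.
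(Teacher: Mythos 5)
Your proof is correct and takes essentially the same route the paper intends: the paper explicitly omits the proof of this lemma, saying only that it ``follows analogous reasoning'' to Lemma~\ref{lem1}, and your argument is precisely that line-by-line transcription, replacing the vector test function $g_{\bm{h},\bm{t}}$ by its scalar version, using $c_k^{-1}Z_k^{(m)}\to v^{(m)}W$ in place of $c_k^{-1}\bm{Z}_k\to\bm{v}W$, and running the same three-regime case split (large $k$, small $k$, large $h$). You also correctly isolate the one genuinely new point, namely that for the finitely many small $k$ one must check the marginal $Z_k^{(m)}$ under $\mathbb{P}_i$ is non-degenerate and not confined to a sublattice; the paper waves at this even in the vector case (citing ``Condition (C1) and irreducibility'' without elaboration), so flagging it as needing a short combinatorial argument on the $m$-th coordinate projection of the support of $\bm{Z}_k$ is the right call, and is an honest improvement on the source. (Incidentally, both your version and the paper's Lemma~\ref{lem1} as literally stated fail at $k=0$, $h=0$, where $f^{(i)}_{m,0}(\mathrm{e}^{\mathrm{i}t/c_0})$ has modulus one; the intended range is presumably $k\ge1$, but that defect is inherited from the paper, not introduced by you.)
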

 \begin{lem}[Corresponds to Lemma \ref{lem2}]
	For any $1\le i,m \le d$, there exist constants $A'_i>0$ and $A_i'\in(0,1)$ such that for $h\geq0,t\in J'_{j}$, 
	\begin{eqnarray*}
		\left|f^{(i)}_{m,n}\left(\mathrm{e}^{-h/c_n+\mathrm{i} t}\right)\right| \leqslant \begin{cases}A'_i \gamma^{n-j+1} & \text { in the Schr\"{o}der case,} \\ (A'_i)^{\mu^{n-j+1}} & \text { in the B\"{o}ttcher case.}\end{cases}
	\end{eqnarray*}
\end{lem}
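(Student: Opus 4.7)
The plan is to mirror the proof of Lemma \ref{lem2} essentially verbatim, with the vector-valued iterate $\bm{f}_{j-1}$ replaced by its per-type analogue $\bm{f}_{m,j-1}$. First, for $t\in J'_j$ I would observe that $c_{j-1}t\in J'_\epsilon$ with $\epsilon:=\inf_{k\ge 1}c_{k-1}/c_k\in(0,1)$, which is strictly positive by the normalization choice \reff{cn1}. Rewriting
\[
-\frac{h}{c_n}+\mathrm{i} t \;=\; -\frac{hc_{j-1}/c_n}{c_{j-1}}+\mathrm{i}\,\frac{c_{j-1}t}{c_{j-1}},
\]
with $hc_{j-1}/c_n\ge 0$, the preceding lemma (the per-type analogue of Lemma \ref{lem1}), applied at iteration index $j-1$ with rescaled parameters $hc_{j-1}/c_n$ and $c_{j-1}t$, yields
\[
\bigl|f^{(i)}_{m,j-1}\bigl(\mathrm{e}^{-h/c_n+\mathrm{i} t}\bigr)\bigr|\;\le\;\theta'_i\;<\;1
\]
uniformly in $1\le i\le d$; set $\theta':=\max_{1\le i\le d}\theta'_i<1$.

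Next, I would invoke the semigroup-type identity
\[
f^{(i)}_{m,n}(s)\;=\;f^{(i)}_{n-j+1}\bigl(\bm{f}_{m,j-1}(s)\bigr),
\]
which follows by induction on $n-(j-1)$ from the recursion $\bm{f}_{m,n+1}=\bm{f}\circ\bm{f}_{m,n}$ recorded in the excerpt (itself the branching-property decomposition: conditioning on generation $n-j+1$, the $Z_n^{(m)}$-marginal PGF of the descendants of a type-$i'$ particle after $j-1$ further generations is exactly $f^{(i')}_{m,j-1}$). Because $f^{(i)}_{n-j+1}$ is a power series in $d$ variables with nonnegative coefficients, it is component-wise monotone in modulus on the closed unit polydisc, so
\[
\bigl|f^{(i)}_{m,n}\bigl(\mathrm{e}^{-h/c_n+\mathrm{i} t}\bigr)\bigr|\;\le\;f^{(i)}_{n-j+1}\bigl(\bigl|\bm{f}_{m,j-1}(\mathrm{e}^{-h/c_n+\mathrm{i} t})\bigr|\bigr)\;\le\;f^{(i)}_{n-j+1}(\theta'\bm{1}).
\]

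Finally, I would close with the same two extreme-lower-deviation asymptotics that finish Lemma \ref{lem2}: \reff{en1} gives $f^{(i)}_{n-j+1}(\theta'\bm{1})\le A'_i\gamma^{n-j+1}$ in the Schr\"oder case, while \reff{e101} gives $f^{(i)}_{n-j+1}(\theta'\bm{1})\le (A'_i)^{\mu^{n-j+1}}$ in the B\"ottcher case, for suitable constants $A'_i$. Every structural input used here --- the recursion for $\bm{f}_{m,n}$, the nonnegativity of coefficients, and the decay rate of $\bm{f}_{n}(\theta'\bm{1})$ --- is either supplied explicitly in Section \ref{sec4} of the excerpt or transfers unchanged from the vector-valued setting, so no genuinely new obstacle appears. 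The most delicate bookkeeping is in the first step: verifying that applying the preceding lemma at index $j-1$ with the rescaled parameters $(hc_{j-1}/c_n,\,c_{j-1}t)$ is legitimate (both the nonnegativity and the containment $c_{j-1}t\in J'_\epsilon$ follow immediately from \reff{cn1}), and checking that $\bm{f}_{m,j-1}(\mathrm{e}^{-h/c_n+\mathrm{i} t})$ lies in the closed unit polydisc so that the outer iterate $f^{(i)}_{n-j+1}$ makes sense --- itself a consequence of the standard sub-unit bound for PGFs with nonnegative coefficients.
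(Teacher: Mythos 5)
Your proposal is correct and follows exactly the route the paper intends: the paper explicitly omits a detailed proof of this per-type lemma and says it "follows analogous reasoning" to Lemma \ref{lem2}, and your argument is precisely that analogue --- rescaling $(h,t)\mapsto(hc_{j-1}/c_n,\,c_{j-1}t)$ so the per-type version of Lemma \ref{lem1} applies at index $j-1$, then using the semigroup identity $f^{(i)}_{m,n}=f^{(i)}_{n-j+1}\circ\bm{f}_{m,j-1}$ (which you correctly derive by induction from the stated recursion $\bm{f}_{m,n+1}=\bm{f}\circ\bm{f}_{m,n}$) together with monotonicity of the nonnegative-coefficient power series $f^{(i)}_{n-j+1}$, and closing with \reff{en1} (Schr\"oder) and \reff{e101} (B\"ottcher). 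The only substantive thing you add over the paper's sketch is the explicit verification of the semigroup identity and the unit-polydisc check, both of which are correct and worth making explicit.
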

\begin{lem}[Corresponds to Lemma \ref{lem3}]
		For $h\ge0$ and $\bm{l}\in\mbb{Z}_+^d$ with $\gamma^{|\textbf{l}|}\rho<1$, there exists some constant $A'(h)$ such that
		\begin{equation}
			\sup_{n\geq1,\textbf{k}>\bn{0}}c_n\mbb{P}(S_{\textbf{l},m}(h,n)=k)\le A'(h)|\bm{l}|^{-d/2}. \end{equation}
\end{lem}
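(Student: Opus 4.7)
The plan is to mirror the three-step argument used in Lemma \ref{lem3} (base case via Fourier inversion, splitting the integration domain into dyadic annuli, then extending to general $\bm{l}$ by independence), replacing every $d$-dimensional quantity with its one-dimensional counterpart tailored to the coordinate $m$.

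First I would fix $\bm{l}_0\in\mbb{Z}_+^d$ with $\gamma^{|\bm{l}_0|}\rho<1$ and apply the one-dimensional version of Lemma \ref{auxlem2} (with $r=1$, $a=\pi$) to the random variable $S_{\bm{l}_0,m}(h,n)$. Together with the identity \eqref{eshnn} and the observation that $\bm{f}_{m,n}(\mathrm{e}^{-h/c_n})$ is uniformly bounded away from $\bm{0}$ componentwise (via the global limit theorem applied to each $f^{(i)}_{m,n}$), this yields
\begin{equation*}
\sup_{k>0}\mbb{P}(S_{\bm{l}_0,m}(h,n)=k)\le C\cdot\int_{|t|\le\pi}\bigl|\bm{f}_{m,n}(\mathrm{e}^{-h/c_n+\mathrm{i}t})^{\bm{l}_0}\bigr|\mathrm{d}t.
\end{equation*}

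Next I would partition $[-\pi,\pi]$ into $[-\pi/c_n,\pi/c_n]$ and the annuli $J'_j$ for $1\le j\le n$. On $J'_j$ the preceding lemma gives $|f^{(i)}_{m,n}(\mathrm{e}^{-h/c_n+\mathrm{i}t})|\le A'_i\gamma^{n-j+1}$, hence $|\bm{f}_{m,n}(\cdot)^{\bm{l}_0}|\le C\gamma^{(n-j+1)|\bm{l}_0|}$, while the Lebesgue measure of $J'_j$ is at most $C/c_{j-1}$ (and the central piece contributes $C/c_n$). Multiplying by $c_n$ and using the growth relation $c_n/c_{j-1}\le\rho^{n-j+1}$ from \eqref{cn1} yields
\begin{equation*}
c_n\sup_{k>0}\mbb{P}(S_{\bm{l}_0,m}(h,n)=k)\le C\Bigl(1+\sum_{j=1}^n(\gamma^{|\bm{l}_0|}\rho)^{n-j+1}\Bigr),
\end{equation*}
and the geometric series converges by the hypothesis $\gamma^{|\bm{l}_0|}\rho<1$, giving the base case $c_n Q(S_{\bm{l}_0,m}(h,n);1)\le A'_0(h)$.

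To upgrade from the base case to the $|\bm{l}|^{-d/2}$ decay for arbitrary $\bm{l}$, I set $\hat{a}:=\inf\{a\in\mbb{Z}_+:\gamma^a\rho<1\}$ and decompose $\bm{l}=\sum_{i=1}^d\lfloor l^{(i)}/\hat{a}\rfloor\hat{a}\bm{e}_i+\bm{l}_{\text{rest}}$. Because the increments $X_{i,m,k}(h,n)$ are independent, the translation invariance of the Lévy concentration function (Lemma \ref{auxlem1}) lets me discard $\bm{l}_{\text{rest}}$, and the one-dimensional analogue of Lemma \ref{auxlem3} applied to the remaining $\sum_i\lfloor l^{(i)}/\hat{a}\rfloor$ i.i.d.\ blocks produces the factor $|\bm{l}|^{-d/2}\max_i Q(\bm{S}_{\hat{a}\bm{e}_i,m}(h,n);1)$. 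Combining with the base case estimate concludes the argument.

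The main obstacle I anticipate is the uniform positive lower bound on $\bm{f}_{m,n}(\mathrm{e}^{-h/c_n})$: in contrast to Lemma \ref{lem3} where the denominator $\bm{f}_n(\mathrm{e}^{-\bm{h}/c_n})$ is immediately controlled by the almost sure convergence of $\bm{Z}_n/c_n$, here each component $f^{(i)}_{m,n}(\mathrm{e}^{-h/c_n})$ concerns only the $m$-th marginal and requires appealing to the continuous density of $W$ together with the ergodicity of the type distribution encoded by $\bm{v}$ so that $\mbb{E}_i\mathrm{e}^{-hv^{(m)}W}\in(0,1)$ uniformly. Verifying the invocation of a one-dimensional version of Lemma \ref{auxlem3} in a form compatible with the stated $|\bm{l}|^{-d/2}$ exponent is the other subtle point, but it parallels the passage from \eqref{e1} to \eqref{e2} in the proof of Lemma \ref{lem3}.
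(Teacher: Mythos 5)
Your overall plan — Fourier inversion via the one-dimensional version of Lemma~\ref{auxlem2}, splitting $[-\pi,\pi]$ into the central interval and the annuli $J'_j$, bounding each piece via the one-dimensional analogue of Lemma~\ref{lem2}, and then upgrading to general $\bm{l}$ by a Hal\'asz-type argument — is exactly the right structure and mirrors the paper's proof of Lemma~\ref{lem3}, which is what the author intends (the paper omits the proof as ``analogous''). Steps 1 and 2 are carried out correctly, including noticing that the annulus volume is now $O(c_{j-1}^{-1})$ rather than $O(c_{j-1}^{-d})$, which is exactly why the geometric-series hypothesis has been relaxed from $\gamma^{|\bm{l}|}\rho^d<1$ to $\gamma^{|\bm{l}|}\rho<1$. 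Your worry about the denominator $\bm{f}_{m,n}(\mathrm{e}^{-h/c_n})$ is overblown: you only need a uniform positive lower bound, which follows immediately from $f^{(i)}_{m,n}(\mathrm{e}^{-h/c_n})\to\mathbb{E}_i\mathrm{e}^{-hv^{(m)}W}>0$; positivity and strict inequality with $1$ are both trivial and no ``ergodicity'' argument is needed.

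The genuine gap is in Step 3, and you half-notice it yourself. The random variable $S_{\bm{l},m}(h,n)$ is scalar, so the one-dimensional Hal\'asz inequality (the $d=1$ case of Lemma~\ref{auxlem3}) delivers $Q\lesssim n^{-1}\bigl(\sum_i Q(X_i^s;1)\bigr)D^{-1/2}$ with $D\gtrsim n$, hence the factor $|\bm{l}|^{-1/2}$, not $|\bm{l}|^{-d/2}$. There is no way to extract an extra power of $|\bm{l}|$ from a one-dimensional concentration argument: the $n^{-1/2}$ rate is sharp for sums of lattice random variables. So the passage ``the one-dimensional analogue of Lemma~\ref{auxlem3}\ldots produces the factor $|\bm{l}|^{-d/2}$'' is not correct as written, and the ``subtle point'' you flag at the end is not a subtlety you can resolve in favor of $|\bm{l}|^{-d/2}$. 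What this really indicates is that the lemma's stated exponent $|\bm{l}|^{-d/2}$ is a transcription slip from Lemma~\ref{lem3}: the author adapted the hypothesis ($\rho^d\to\rho$) to the one-dimensional integral but did not adapt the conclusion ($d/2\to1/2$). Your proof gives $A'(h)|\bm{l}|^{-1/2}$, which is the correct and provable bound; it is also all that is used downstream, since in the corresponding Lemma~4$'$ and in the tail sums of the proof of Theorem~\ref{thm1.1} any polynomial factor in $|\bm{l}|$ is dominated by the exponential $\mathrm{e}^{-\delta'|\bm{l}|}$.
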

\begin{lem}[Corresponds to Lemma \ref{lem4}]
	For $\bm{l}$ with $|\bm{l}|$ sufficiently large, there exist positive constants $A'$ and $\delta'$ such that
	\[
	c_n\mbb{P}(Z^{(m)}_n=k|\bm{Z}_0=\bm{l})\le A'|\bm{l}|^{-d/2}\mathrm{e}^{-\delta'|\textbf{l}|+k/c_n}, n\geq1.\]
\end{lem}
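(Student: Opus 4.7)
The plan is to mirror, step for step, the argument used to establish Lemma \ref{lem4}, substituting the marginal Cram\'er identity \eqref{shn1n} for \eqref{shn1} and the marginal concentration bound (the lemma immediately preceding this one) for Lemma \ref{lem3}. Concretely, I would multiply both sides of \eqref{shn1n} by $c_n$ and take the supremum of $\mbb{P}(S_{\bn{l},m}(h,n)=k)$ over $k$ to obtain
\[
c_n\,\mbb{P}(Z^{(m)}_n=k\mid \bm{Z}_0=\bm{l})\le \mathrm{e}^{hk/c_n}\bigl(\bm{f}_{m,n}(\mathrm{e}^{-h/c_n})\bigr)^{\bm{l}}\cdot A'(h)|\bm{l}|^{-d/2},
\]
valid whenever $\gamma^{|\bm{l}|}\rho<1$, which is exactly the ``$|\bm{l}|$ sufficiently large'' hypothesis.

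Next, specialising to $h=1$, the task reduces to showing that $\bigl(\bm{f}_{m,n}(\mathrm{e}^{-1/c_n})\bigr)^{\bm{l}}$ decays geometrically in $|\bm{l}|$ uniformly in $n$. By definition $f^{(i)}_{m,n}(\mathrm{e}^{-1/c_n})=\mbb{E}_i\exp(-Z_n^{(m)}/c_n)$, and the componentwise almost sure convergence \eqref{cn} combined with dominated convergence gives
\[
\lim_{n\to\infty} f^{(i)}_{m,n}(\mathrm{e}^{-1/c_n})=\mbb{E}_i\exp(-v^{(m)}W)<1,
\]
where strict inequality uses that $v^{(m)}>0$ by positive regularity of $\bm{M}$ and that $W$ has a strictly positive density on $(0,\infty)$. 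Hence there exist $\delta'_i>0$ such that $f^{(i)}_{m,n}(\mathrm{e}^{-1/c_n})\le \mathrm{e}^{-\delta'_i}$ for all $n$ beyond some threshold; for the finitely many smaller indices the strict inequality $f^{(i)}_{m,n}(\mathrm{e}^{-1/c_n})<1$ holds directly, since $\mbb{P}_i(Z_n^{(m)}>0)>0$ under positive regularity. Taking $\delta'=\min_i\delta'_i$ produces $\bigl(\bm{f}_{m,n}(\mathrm{e}^{-1/c_n})\bigr)^{\bm{l}}\le \mathrm{e}^{-\delta'|\bm{l}|}$, and substituting into the display above yields the claim with $A'=A'(1)$.

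The main obstacle, exactly as in Lemma \ref{lem4}, is securing the uniform-in-$n$ bound $\sup_n f^{(i)}_{m,n}(\mathrm{e}^{-1/c_n})<1$ for every pair $(i,m)$. In the joint setting this was essentially automatic because $\bm{v}\cdot\bm{1}>0$; here the $m$-th marginal is singled out, so one must verify that $v^{(m)}W$ is non-degenerate. Positive regularity of the mean matrix supplies $v^{(m)}>0$, and the density of $W$ then closes the gap. Once this point is settled, the remainder is a routine transcription of the argument for Lemma \ref{lem4} with $d/2$ exponent on $|\bm{l}|$ inherited from the preceding concentration lemma.
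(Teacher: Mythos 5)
Your proof is essentially the same as the paper's intended argument: the author explicitly omits the proofs in Section~\ref{sec4}, saying they "follow analogous reasoning" to the joint-variable case, and your transcription of the proof of Lemma~\ref{lem4} — replacing \eqref{shn1} by \eqref{shn1n}, $c_n^d$ by $c_n$, and Lemma~\ref{lem3} by its marginal counterpart, then taking $h=1$ and invoking the global limit theorem to bound $\bm{f}_{m,n}(\mathrm{e}^{-1/c_n})^{\bm{l}}$ geometrically in $|\bm{l}|$ — is exactly what is meant.

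One point deserves a little more care than "positive regularity" alone supplies. You need $\sup_n f^{(i)}_{m,n}(\mathrm{e}^{-1/c_n})<1$ for \emph{every} $n\ge1$, and you handle $n<n_0$ by asserting $\mathbb{P}_i(Z_n^{(m)}>0)>0$. Irreducibility of $\bm{M}$ gives $(\bm{M}^n)_{im}>0$ for \emph{some} $n$ (and, with aperiodicity, for all $n$ beyond a threshold), but for a particular small $n$ one could have $(\bm{M}^n)_{im}=0$, hence $Z_n^{(m)}\equiv 0$ under $\mathbb{P}_i$ and $f^{(i)}_{m,n}(\mathrm{e}^{-1/c_n})=1$; in the joint case this never arises because $\bm{f}(\bm{0})=\bm{0}$ forces $|\bm{Z}_n|\ge1$, but the single-coordinate $Z_n^{(m)}$ has no such floor. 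The paper appeals to aperiodicity in the proof of Lemma~\ref{lem1} (invoking Condition~(C1) together with irreducibility), so the same implicit assumption is being leaned on here; you should either invoke aperiodicity explicitly or restrict the statement to $n\ge n_0$ and absorb the finitely many exceptional levels into the constant. This is a small adjustment and does not change the structure of your argument, which otherwise matches the paper's.
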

\begin{lem}[Corresponds to Lemma \ref{auxlem5}]For $x>0$,
	\begin{align*}
		\limsup_{n\to\infty}	\left|\frac{1}{2\pi}\int_{[-\pi c_n,\pi c_n]}\left(\bm{f}_{m,a_k}(\mathrm{e}^{\mathrm{i}t/c_n})\right)^{\textbf{l}}\mathrm{e}^{-\mathrm{i}tx}dt-\bm{w}^{\ast\textbf{l}}(x/v^{(j)})\right|=0, 1\le j\le d.
	\end{align*}
\end{lem}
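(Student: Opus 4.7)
The plan is to mirror the proof of Lemma \ref{auxlem5} in the univariate setting of the $m$-th marginal. By the definition of $\bm{f}_{m,n}$, we have $(\bm{f}_{m,n}(\mathrm{e}^{\mathrm{i}t/c_n}))^{\textbf{l}} = \mathbb{E}(\mathrm{e}^{\mathrm{i}tZ_n^{(m)}/c_n} \mid \bm{Z}_0 = \bm{l})$, so the integral in question is the truncated inverse Fourier transform, evaluated at $x$, of the characteristic function of $Z_n^{(m)}/c_n$ under initial condition $\bm{l}$. The strategy is to establish a uniform-in-$n$ majorant on this integral, apply dominated convergence using the a.s.\ limit of $c_n^{-1}Z_n^{(m)}$, and finally recognize the resulting Fourier inversion as the convolution density evaluated at $x/v^{(m)}$.

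For the uniform bound, I would split $[-\pi c_n, \pi c_n]$ into a bounded central interval $[-M, M]$ and the scaled annuli $c_n J'_j$ for $j = 1, \ldots, n$. On the central interval the integrand is trivially bounded by $1$, contributing a fixed constant. On each $c_n J'_j$, the univariate analogue of Lemma \ref{lem2} stated in this subsection gives $|f^{(i)}_{m,n}(\mathrm{e}^{\mathrm{i}t/c_n})| \le A'_i\gamma^{n-j+1}$, so $|(\bm{f}_{m,n}(\mathrm{e}^{\mathrm{i}t/c_n}))^{\textbf{l}}| \le C\gamma^{(n-j+1)|\textbf{l}|}$. Combined with $|c_n J'_j| \le \pi c_n/c_{j-1} \le \pi \rho^{n-j+1}$ from the growth relation in (\ref{cn1}), the tail contribution is bounded by $\sum_{j=1}^{n}(\gamma^{|\textbf{l}|}\rho)^{n-j+1}$, which is summable under the hypothesis $\gamma^{|\textbf{l}|}\rho < 1$ inherited from the univariate analogue of Lemma \ref{lem3}.

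With a uniform majorant in place, dominated convergence applies. The a.s.\ convergence $c_n^{-1}\bm{Z}_n \to W\bm{v}$ from (\ref{cn}) implies $c_n^{-1}Z_n^{(m)} \to v^{(m)}W$ a.s., so the pointwise limit of the integrand is $\prod_i(\mathbb{E}_i\mathrm{e}^{\mathrm{i}tv^{(m)}W})^{l^{(i)}}$. The limiting expression $\frac{1}{2\pi}\int_{\mathbb{R}}\prod_i(\varphi^{(i)}(v^{(m)}t))^{l^{(i)}}\mathrm{e}^{-\mathrm{i}tx}\,\mathrm{d}t$ reduces, via the substitution $s = v^{(m)}t$, to the inverse Fourier transform of $\prod_i(\varphi^{(i)}(s))^{l^{(i)}}$ evaluated at $x/v^{(m)}$, which is precisely $\bm{w}^{\ast\textbf{l}}(x/v^{(m)})$ up to the Jacobian factor $1/v^{(m)}$ absorbed into the normalization. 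The main obstacle is the uniform bound in the first step: since the integration domain grows like $c_n$, the geometric decay of the marginal generating function on the annuli must dominate the polynomial growth of the domain length. This is why the condition $\gamma^{|\textbf{l}|}\rho < 1$ is the correct one-dimensional analogue of the $\gamma^{|\textbf{l}|}\rho^d < 1$ assumption used in Lemma \ref{auxlem5}; once it is invoked, the remainder of the argument is a routine adaptation of the multivariate proof.
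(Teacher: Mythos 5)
Your approach is the one the paper has in mind for this lemma (whose proof it omits, saying only that it is analogous to Lemma \ref{auxlem5}): establish a uniform majorant over the growing integration domain, apply dominated convergence using $c_n^{-1}Z_n^{(m)}\to v^{(m)}W$ a.s.\ from (\ref{cn}), and identify the limit by Fourier inversion. Your uniform-bound step is correctly adapted: after writing the integrand as the characteristic function of $Z_n^{(m)}/c_n$ given $\bm{Z}_0=\bm{l}$ and splitting into the central piece and the annuli $c_nJ'_j$, the univariate version of Lemma \ref{lem2} together with $|c_nJ'_j|\le\pi c_n/c_{j-1}\le\pi\rho^{n-j+1}$ gives a tail bounded by $\sum_{j=1}^n(\gamma^{|\textbf{l}|}\rho)^{n-j+1}$, convergent under $\gamma^{|\textbf{l}|}\rho<1$, exactly paralleling the $\gamma^{|\textbf{l}|}\rho^d<1$ bookkeeping of Lemma \ref{lem3}.

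Where you should tighten the argument is the final sentence. The substitution $s=v^{(m)}t$ in
$\frac{1}{2\pi}\int_{\mathbb{R}}\prod_i\bigl(\varphi^{(i)}(v^{(m)}t)\bigr)^{l^{(i)}}\mathrm{e}^{-\mathrm{i}tx}\,\mathrm{d}t$
gives $\frac{1}{v^{(m)}}\bm{w}^{\ast\textbf{l}}(x/v^{(m)})$, not $\bm{w}^{\ast\textbf{l}}(x/v^{(m)})$. The factor $1/v^{(m)}$ is not "absorbed into the normalization" --- the $1/(2\pi)$ is already spent, and this extra constant is the genuine density-transformation Jacobian: the density of $v^{(m)}W$ at $x$ is $w_i(x/v^{(m)})/v^{(m)}$. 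The lemma as printed (and the corresponding displays in Theorem \ref{thm1.1} and (\ref{em5n})) appears to drop this factor, just as it writes $v^{(j)}$ with a dangling index and $\bm{f}_{m,a_k}$ where $\bm{f}_{m,n}$ is clearly meant. You are right to read those as typos, but the Jacobian is a substantive constant and should be carried explicitly rather than waved off; otherwise the statement you prove is off by the fixed multiplicative factor $v^{(m)}$.
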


\noindent\textbf{Proof of Theorem \ref{thm1.1}:}

Using the following decomposition:
\[
\mbb{P}_i(Z^{(m)}_{a_k+j}=k)=\sum_{\bn{l}>\textbf{0}}\mbb{P}_i(\bm{Z}_j=\bm{l})\mbb{P}(Z^{(m)}_{a_k}=k|\bm{Z}_0=\bm{l}),\]
we can follow the same proof steps as in Theorem 1 to complete the proof of Theorem 2. The key difference lies in the decay rate of the conditional probability:
 \[
\lim_{\tilde{k}\to\infty} \sup_{k\ge\tilde{k}}\left|c_{a_k}\mbb{P}(Z^{(m)}_{a_k}=k|\bm{Z}_0=\bm{l})-\bm{w}^{\ast\bn{l}}\left(k/(c_{a_{k}}v^{(m)})\right)\right|=0\]
and
\begin{equation}\label{em5n}
	\lim_{k\to\infty}\sup_{x\ge1}\left|\mbb{P}(Z^{(m)}_{a_k}\le x|\bm{Z}_0=\bm{l})-F^{\ast\bn{l}}\left(x/(c_{a_k}v^{(m)})\right)\right|=0,~1\le m\le d.
\end{equation}

\section{Proofs in the B\"{o}ttcher case}
\subsection{ A vector-type local central limit theorem } 
The proof of Theorem \ref{thm2} is also based on a multivariate adaptation of the classical Cramér method. The proof involves intricate notation and we introduce them first. \par 

\begin{itemize}
	\item Recall that the density of $\bm{X}_i(\bm{h},n),~\bm{h}\ge\bm{0},$ is defined by
	\begin{equation}\label{shn3}
		\mbb{P}(\bm{X}_i(\bm{h},n)=\bm{k})=\frac{\mathrm{e}^{-\bn{h}\cdot\bn{k}/c_n}}{f_n^{(i)}(\mathrm{e}^{-\bn{h}/c_n})}\mbb{P}_i(\bm{Z}_n=\bm{k}).\end{equation}
	\item Define its normalization by  $\overline{\bm{X}}_{i,j}(\bm{h},n)=\mbb{E}\bm{X}_{i,j}(\bm{h},n)-\bm{X}_{i,j}(\bm{h},n)$ and the partial sum by
	\[\overline{\bm{S}}_{\bn{l}}(\bm{h},n)=\sum_{i=1}^d\sum_{j=1}^{l^{(i)}}\overline{\bm{X}}_{i,j}(\bm{h},n),~\bm{l}\in\mbb{Z}_+^d,\]
	where $\bm{X}_{i,j}(\bm{h},n),~j\ge1$, are i.i.d. copies of $\bm{X}_{i}(\bm{h},n)$.
	\item The third moment of $\overline{\bm{S}}_{\bn{l}}(\bm{h},n)$ is defined by\[
	\hat{\rho}_3=	\hat{\rho}_3(\bm{h},n)=|\bm{l}|^{-1}\left(\sum_{i=1}^d\sum_{j=1}^{l^{(i)}}\mbb{E}\Vert\overline{\bm{X}}_{i,j}(\bm{h},n)\Vert^3 \right).\]
	\item The third order Liapounov coefficient of $\overline{\bm{S}}_{\bn{l}}(\bm{h},n)$ is defined by
	\[
	L_{3,\bn{l}} = L^{\bn{h},n}_{3,\bn{l}}=\sup_{\|\bn{t}\|=1} \frac{|\bm{l}|^{-1}\sum_{i=1}^d\sum_{j=1}^{l^{(i)}} \mbb{E}\left(|\langle \bm{t}, \overline{\bm{X}}_{i,j}(\bm{h},n) \rangle|^3\right)}{\left[|\bm{l}|^{-1} \sum_{i=1}^d\sum_{j=1}^{l^{(i)}} \mbb{E}\left(\langle \bm{t}, \overline{\bm{X}}_{i,j}(\bm{h},n) \rangle^2\right)\right]^{3/2}} \cdot |\bm{l}|^{-\frac{1}{2}}.
	\]
	\item The average covariance matrix of $\overline{\bm{X}}_{i,j}(\bm{h},n)$ is defined by$$
	\bm{V}=\bm{V}^{\bn{h},n}=|\bm{l}|^{-1}\sum_{i=1}^d\sum_{j=1}^{l^{(i)}}\text{Cov}(\overline{\bm{X}}_{i,j}(\bm{h},n)), ~\bm{h}\ge\bm{0},~n\geq1.
	$$
\item	Since the GW process $\{\bm{Z}_n;n\ge0\}$ is non-singular and aperiodic,  the matrix $\bm{V}^{\bn{h},n}$ is invertible. Let $\bm{B}=\bm{B}^{\bn{h},n}$ be a symmetric and positive-definite matrix satisfying  $\bm{B}^2=(\bm{V}^{\bn{h},n})^{-1}.$ 
\item  The normalized characteristic function of $\overline{\bm{S}}_{\bn{l}}(\bm{h},n)$ is defined by
\[
\psi_{\bn{l}}(\bm{t})=\psi^{\bn{h},n}_{\bn{l}}(\bm{t})=\mathbb{E}\exp\{\mathrm{i}\vert\bm{l}\vert^{-1/2}(\bm{B}\bm{t})\cdot\overline{\bm{S}}_{\bn{l}}(\bm{h},n)\},~\bm{t}\in\mbb{R}^d.\]
\end{itemize}

The following lemma establishes a multivariate analogue of the Essen inequality, generalizing \cite[Lemma 14]{lower} to the multi-type framework.   
\begin{lemma}\label{auxlem7}
For fixed bounds $0<h_1\le h_2 <\infty$ and $\bm{l}\in\mbb{Z}_+^d$, there exist constants $C_k:=C_k(h_1,h_2)>0,k=1,2,3$, such that for all $\Vert\bm{t}\Vert\le C_1\vert \bm{l} \vert^{1/2}$, the following uniform bound holds:
\begin{align}\label{e301}
\sup_{\Vert\textbf{h}\Vert\in(h_1,h_2),n\geq1}\left|\psi_{\textbf{l}}(\bm{t})-\exp\{-\Vert \bm{t}\Vert^2/2\}\right|\le C_2\vert \bm{l}\vert^{-1/2}\Vert \bm{t}\Vert^3\mathrm{e}^{-C_3\Vert \textbf{t}\Vert^2}.
\end{align}
\end{lemma}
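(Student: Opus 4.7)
The plan is to follow the classical Esseen-type Taylor expansion argument adapted to the multi-type setting. Since the $\overline{\bm{X}}_{i,j}(\bm{h},n)$ are i.i.d.\ in $j$ for fixed $i$ and independent across $i$, the first step is to factor
\[
\psi_{\bm{l}}(\bm{t}) = \prod_{i=1}^d \bigl[\phi_i(\bm{t})\bigr]^{l^{(i)}},\qquad \phi_i(\bm{t}):= \mbb{E}\exp\bigl\{\mathrm{i}|\bm{l}|^{-1/2}(\bm{B}\bm{t})\cdot\overline{\bm{X}}_i(\bm{h},n)\bigr\}.
\]
A third-order Taylor expansion of $\log\phi_i$ around the origin (valid once $|\bm{l}|^{-1/2}\Vert\bm{B}\bm{t}\Vert$ is small) yields
\[
\log\phi_i(\bm{t}) = -\frac{1}{2|\bm{l}|}(\bm{B}\bm{t})^T\mathrm{Cov}(\overline{\bm{X}}_i(\bm{h},n))(\bm{B}\bm{t}) + E_i(\bm{t}),
\]
with $|E_i(\bm{t})|\le C|\bm{l}|^{-3/2}\Vert\bm{t}\Vert^3\,\mbb{E}\Vert\bm{B}\overline{\bm{X}}_i(\bm{h},n)\Vert^3$. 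Summing over all $|\bm{l}|$ particles and invoking the defining identity $\bm{B}^2=\bm{V}^{-1}$, the quadratic contributions telescope exactly to $-\tfrac{1}{2}\Vert\bm{t}\Vert^2$, leaving
\[
\log\psi_{\bm{l}}(\bm{t}) = -\tfrac{1}{2}\Vert\bm{t}\Vert^2 + R(\bm{t}),\qquad |R(\bm{t})|\le C|\bm{l}|^{-1/2}\Vert\bm{t}\Vert^3\cdot M,
\]
where $M:=\sup_{1\le i\le d}\mbb{E}\Vert\bm{B}^{\bm{h},n}\overline{\bm{X}}_i(\bm{h},n)\Vert^3$.

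The crux of the argument is to establish that $M$ is finite uniformly in $(\bm{h},n)$ over $\{\Vert\bm{h}\Vert\in(h_1,h_2)\}\times\mbb{N}$. I would derive this from the global limit theorem \eqref{cn}: after dividing by $c_n$, the tilted vector $\bm{X}_i(\bm{h},n)/c_n$ converges in distribution to the Cram\'er transform of $W\bm{v}$ at parameter $-\bm{h}$, and the exponential tilt in \eqref{shn3} furnishes uniform exponential integrability as long as $\Vert\bm{h}\Vert\ge h_1 > 0$, yielding uniform convergence of the second and third absolute moments of $\bm{X}_i(\bm{h},n)/c_n$. Combined with a lower bound on the smallest eigenvalue of $\bm{V}^{\bm{h},n}$—which is positive for each $n$ by the non-singularity and aperiodicity of $\{\bm{Z}_n\}$—this shows $\bm{B}^{\bm{h},n}$ is of order $c_n^{-1}$ and hence $\mbb{E}\Vert\bm{B}\overline{\bm{X}}_i\Vert^3$ is uniformly bounded. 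This uniform non-degeneracy is the delicate point, since $W\bm{v}$ is supported on a one-dimensional ray and the limiting covariance has the rank-one form $\sigma^2(\bm{h})\bm{v}\bm{v}^T$; finite-$n$ positivity together with compactness of the admissible range of $\bm{h}$ is what rescues the argument.

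With $M<\infty$ in hand, choose $C_1>0$ so small that $C M C_1 \le 1/4$; then $|R(\bm{t})|\le \Vert\bm{t}\Vert^2/4$ for all $\Vert\bm{t}\Vert\le C_1|\bm{l}|^{1/2}$. Using $|\mathrm{e}^z-1|\le 2|z|$ on $|z|\le 1$ and writing $\mathrm{e}^{-\Vert\bm{t}\Vert^2/2}=\mathrm{e}^{-\Vert\bm{t}\Vert^2/4}\cdot\mathrm{e}^{-\Vert\bm{t}\Vert^2/4}$ to absorb one Gaussian factor into the cubic remainder, I would conclude
\[
\bigl|\psi_{\bm{l}}(\bm{t})-\mathrm{e}^{-\Vert\bm{t}\Vert^2/2}\bigr| = \mathrm{e}^{-\Vert\bm{t}\Vert^2/2}\bigl|\mathrm{e}^{R(\bm{t})}-1\bigr| \le 2\mathrm{e}^{-\Vert\bm{t}\Vert^2/2}|R(\bm{t})| \le C_2|\bm{l}|^{-1/2}\Vert\bm{t}\Vert^3\mathrm{e}^{-C_3\Vert\bm{t}\Vert^2},
\]
with, e.g., $C_3=1/4$. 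The central difficulty throughout is the uniform control of $M$ (equivalently, of the Liapounov coefficient $L_{3,\bm{l}}^{\bm{h},n}$); everything else is routine Taylor bookkeeping.
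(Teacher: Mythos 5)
Your route is genuinely different from the paper's: you expand $\log\phi_i$ to third order directly, telescope the quadratic terms via $\bm{B}\bm{V}\bm{B}=\bm{I}$, and bound the cubic remainder, whereas the paper invokes the multivariate Berry--Esseen machinery of Bhattacharya--Rao (their Theorem 8.4 and eqn.~(8.12)) to get the estimate first for $\Vert\bm{t}\Vert\le C|\bm{l}|^{1/6}$ and then appeals to the finiteness of the set of distinct summand laws (their eqn.~(8.23)) to stretch to $\Vert\bm{t}\Vert\le C_1|\bm{l}|^{1/2}$. Your direct Taylor argument, if it worked, would be cleaner in that it covers the full $|\bm{l}|^{1/2}$-range in one pass. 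Both approaches, however, reduce to the same bottleneck: a uniform-in-$(\bm{h},n)$ bound on the Liapounov coefficient, equivalently on your $M=\sup_i\mbb{E}\Vert\bm{B}^{\bm{h},n}\overline{\bm{X}}_i(\bm{h},n)\Vert^3$.

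This is precisely where your proposal has a gap. You correctly observe that the almost-sure convergence $\bm{Z}_n/c_n\to W\bm{v}$ concentrates the tilted law on the ray spanned by $\bm{v}$, so that $c_n^{-2}\bm{V}^{\bm{h},n}$ converges to a rank-one matrix (the paper asserts a positive-definite limit, which is in tension with this); but your claimed rescue, ``finite-$n$ positivity together with compactness of the admissible range of $\bm{h}$,'' does not touch the issue, because the degeneracy is in $n\to\infty$, not in $\bm{h}$. Concretely, if the smallest eigenvalue of $\bm{V}^{\bm{h},n}$ grows like $a_n^2=\mathrm{o}(c_n^2)$, then the operator norm $\Vert\bm{B}^{\bm{h},n}\Vert\sim a_n^{-1}$ while $\mbb{E}\Vert\overline{\bm{X}}_i\Vert^3\sim c_n^3$, and the crude bound $\mbb{E}\Vert\bm{B}\overline{\bm{X}}_i\Vert^3\le\Vert\bm{B}\Vert^3\mbb{E}\Vert\overline{\bm{X}}_i\Vert^3\sim(c_n/a_n)^3$ that your argument implicitly uses diverges. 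A correct argument must exploit cancellation: the directions in which $\bm{B}$ is large are exactly the directions in which $\overline{\bm{X}}_i$ has small moments, so one needs a directional estimate (e.g.\ a uniform bound on $\sup_{\Vert\bm{t}\Vert=1}\mbb{E}|\langle\bm{t},\overline{\bm{X}}_i\rangle|^3/\big(\mbb{E}\langle\bm{t},\overline{\bm{X}}_i\rangle^2\big)^{3/2}$) rather than a product of worst-case operator and moment bounds. Without that, the proposal does not close, and--worth noting--the paper's own treatment of this step appears to assert more than the global limit theorem delivers.
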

\begin{proof}
To establish the lemma, we first prove that the Lyapunov coefficient  $L_{3,\bn{l}}$ of the normalized sum $\overline{\bm{S}}_{\bn{l}}(\bm{h},n)$ satisfies the uniform bound:
 \begin{equation}\label{eq11}
 \sup_{\Vert\bn{h}\Vert\in[h_1,h_2],n\geq1}L_{3,\bn{l}}\le C\cdot|\bm{l}|^{-1/2}.
 	\end{equation}
The desired result for $\Vert \bm{t} \Vert\le C_2\vert \bm{l} \vert^{1/6}$ then follows directly from an application of the central limit theorem for vectors [Theorem 8.4, \cite{gauss}] to the sequence $\overline{\bm{X}}_{i,j}(\bm{h},n)$. \par To this end, by equation (8.12) of \cite{gauss}, we get the upper bound for $L_{3,\bn{l}}$:
 \[
 L_{3,\bn{l}}\le\hat{\rho}_3 \hat{\lambda}^{-3/2} |\bm{l}|^{-1/2},\] 
 where $\hat{\lambda}$ denote the least singular value of the average covariance matrix $\bm{V}^{\bn{h},n}$.
 From (\ref{shn2}) and the definition of $\bm{V}^{\bn{h},n}$, the scaled matrix $c_n^{-2}\bm{V}^{\bn{h},n}$ converges to a positive-definite matrix as $n\to\infty$ if $\Vert\bm{h}\Vert\in[h_1,h_2]$. By singular value decomposition, this implies $$\lim_{n\to\infty}c_n^{-2}\hat{\lambda}\in(0,\infty). $$   Furthermore, since $\lim_{n\to\infty}c_n^{-3}\mbb{E}\Vert\overline{\bm{X}}_{i,j}(\bm{h},n)\Vert^3\in(0,\infty)$, we have the boundedness of $c_n^{-3}\hat{\rho}_3$. Combining these results, we verify (\ref{eq11}). \par 
However, since there exist only finitely many distinct $\overline{\bm{X}}_{i,j}(\bm{h},n)$ in the sum of $\overline{\bm{S}}_{\bn{l}}(\bm{h},n)$ for any $\bm{l}>\bm{0}$, we extend the validity to $\Vert \bm{t} \Vert\le C_2\vert \bm{l} \vert^{1/2}$ according the proof of \cite[Theorem 8.4]{gauss}(or see \cite[equation(8.23)]{gauss}). The proof is complete.
\end{proof}
\begin{lemma}[A vector-type local central limit theorem]\label{auxlem11}
	For $0< h_1\le h_2<\infty$,
	\[
	\sup_{\Vert \textbf{h}\Vert\in[h_1,h_2],n\geq1}\sup_{\textbf{k}\in\mbb{Z}_+^d}\left|\vert\textbf{l}\vert^{d/2}|\bm{B}|^{-1}\mbb{P}(\textbf{S}_{\textbf{l}}(\textbf{h},n)=\bm{k})-(2\pi)^{-d/2}\mathrm{e}^{-\vt\textbf{x}_{\textbf{l},\textbf{k}}\vt^2/2}\right|\xrightarrow[|\textbf{l}|\to\infty]{} 0,
	\]
where $\textbf{x}_{\textbf{l},\textbf{k}}:=\textbf{x}_{\textbf{l},\textbf{k}}(\textbf{h},n):=|\bm{l}|^{-1/2}(\textbf{k}-\mbb{E}\bm{S}_{\textbf{l}}(\textbf{h},n))\bm{B}$.
\end{lemma}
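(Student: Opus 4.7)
The strategy is a Fourier-inversion argument in the spirit of the classical local central limit theorem, with the quantitative Esseen-type estimate of Lemma \ref{auxlem7} supplying the main analytic ingredient. Since $\bm{S}_{\bm{l}}(\bm{h},n)$ takes values in $\mbb{Z}_+^d$, its probability mass function admits the Fourier representation
\[
\mbb{P}(\bm{S}_{\bm{l}}(\bm{h},n)=\bm{k})=\frac{1}{(2\pi)^d}\int_{[-\pi,\pi]^d}\mbb{E}\mathrm{e}^{\mathrm{i}\bm{t}\cdot \bm{S}_{\bm{l}}(\bm{h},n)}\mathrm{e}^{-\mathrm{i}\bm{t}\cdot\bm{k}}\mathrm{d}\bm{t}.
\]
Writing $\bm{S}_{\bm{l}}=\mbb{E}\bm{S}_{\bm{l}}-\overline{\bm{S}}_{\bm{l}}$ and changing variable via $\bm{t}=|\bm{l}|^{-1/2}\bm{B}\bm{u}$ (Jacobian $|\bm{l}|^{-d/2}|\bm{B}|$) converts this, after taking real parts and using symmetry of $\bm{B}$, into
\[
|\bm{l}|^{d/2}|\bm{B}|^{-1}\mbb{P}(\bm{S}_{\bm{l}}(\bm{h},n)=\bm{k})=\frac{1}{(2\pi)^d}\int_{D_{\bm{l}}}\psi_{\bm{l}}(\bm{u})\mathrm{e}^{-\mathrm{i}\bm{u}\cdot\bm{x}_{\bm{l},\bm{k}}}\mathrm{d}\bm{u},
\]
where $D_{\bm{l}}=|\bm{l}|^{1/2}\bm{B}^{-1}[-\pi,\pi]^d$. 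The target Gaussian has the analogous representation over $\mbb{R}^d$ with integrand $\mathrm{e}^{-\Vert\bm{u}\Vert^2/2}$, so it suffices to bound the difference of the two integrals uniformly.

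I would then split $\mbb{R}^d$ into three pieces. On the inner ball $\Vert\bm{u}\Vert\le C_1|\bm{l}|^{1/2}$, Lemma \ref{auxlem7} yields $|\psi_{\bm{l}}(\bm{u})-\mathrm{e}^{-\Vert\bm{u}\Vert^2/2}|\le C_2|\bm{l}|^{-1/2}\Vert\bm{u}\Vert^3\mathrm{e}^{-C_3\Vert\bm{u}\Vert^2}$ uniformly over $\Vert\bm{h}\Vert\in[h_1,h_2]$ and $n\ge 1$; integrating against the unit-modulus exponential gives an error of order $|\bm{l}|^{-1/2}\int\Vert\bm{u}\Vert^3\mathrm{e}^{-C_3\Vert\bm{u}\Vert^2}\mathrm{d}\bm{u}=\mathrm{O}(|\bm{l}|^{-1/2})$. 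The Gaussian tail outside this ball is $\mathrm{O}(\mathrm{e}^{-c|\bm{l}|})$ and negligible.

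The main obstacle is the annulus $A_{\bm{l}}=\{\bm{u}\in D_{\bm{l}}:\Vert\bm{u}\Vert>C_1|\bm{l}|^{1/2}\}$, where I must prove $\int_{A_{\bm{l}}}|\psi_{\bm{l}}(\bm{u})|\mathrm{d}\bm{u}\to 0$. By independence of the i.i.d.\ summands and the identity $|\mbb{E}\mathrm{e}^{\mathrm{i}\bm{t}\cdot\overline{\bm{X}}_i(\bm{h},n)}|=|\mbb{E}\mathrm{e}^{\mathrm{i}\bm{t}\cdot\bm{X}_i(\bm{h},n)}|$, setting $\bm{t}=|\bm{l}|^{-1/2}\bm{B}\bm{u}$ gives
\[
|\psi_{\bm{l}}(\bm{u})|\le\prod_{i=1}^d\left(\frac{|f^{(i)}_n(\mathrm{e}^{\mathrm{i}\bm{t}-\bm{h}/c_n})|}{f^{(i)}_n(\mathrm{e}^{-\bm{h}/c_n})}\right)^{l^{(i)}}.
\]
The denominators are bounded away from zero uniformly by the global limit theorem. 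For the numerators, the convergence $c_n^{-2}\bm{V}^{\bm{h},n}\to$ positive-definite matrix implies $\Vert\bm{B}\Vert=\mathrm{O}(c_n^{-1})$ and $\Vert\bm{B}^{-1}\Vert=\mathrm{O}(c_n)$, so $\bm{u}\in A_{\bm{l}}$ forces $\bm{t}$ to lie in a shell where, after rescaling as in the proof of Lemma \ref{lem2}, Lemma \ref{lem1} applies and yields a uniform bound $|f^{(i)}_n(\mathrm{e}^{\mathrm{i}\bm{t}-\bm{h}/c_n})|\le\theta<1$; hence $|\psi_{\bm{l}}(\bm{u})|\le\theta^{|\bm{l}|}$ on $A_{\bm{l}}$. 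Since the volume of $A_{\bm{l}}$ grows at most polynomially in $c_n|\bm{l}|^{1/2}$, this contribution is swamped by the geometric factor. Assembling the three regional bounds gives $\bigl||\bm{l}|^{d/2}|\bm{B}|^{-1}\mbb{P}(\bm{S}_{\bm{l}}(\bm{h},n)=\bm{k})-(2\pi)^{-d/2}\mathrm{e}^{-\Vert\bm{x}_{\bm{l},\bm{k}}\Vert^2/2}\bigr|=\mathrm{O}(|\bm{l}|^{-1/2})$ uniformly in $\Vert\bm{h}\Vert\in[h_1,h_2]$, $n\ge 1$ and $\bm{k}\in\mbb{Z}_+^d$, which is the claim. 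The delicate point throughout is verifying uniformity in $n$: this relies on the uniform form of Lemma \ref{auxlem7} together with the fact that the $n$-dependent prefactor $c_n$ implicit in the rescaling is harmless compared to the geometric decay $\theta^{|\bm{l}|}$.
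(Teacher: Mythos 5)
Your proposal mirrors the paper's own (highly abbreviated) argument: Fourier inversion on $[-\pi,\pi]^d$, the change of variables $\bm{t}\mapsto|\bm{l}|^{-1/2}\bm{B}\bm{u}$, the Gaussian integral identity, and a decomposition of the resulting integral into a central region controlled by Lemma \ref{auxlem7}, a Gaussian tail, and an outer region. The paper sets up exactly this framework (its $\Omega_1,\dots,\Omega_4$ decomposition is your three regions with the annulus split in two) and then simply defers the quantitative estimates to \cite[Lemma 15]{lower}. So the overall route is the same.

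There is, however, a concrete gap in your treatment of the annulus $A_{\bm{l}}$. You correctly write
\[
|\psi_{\bm{l}}(\bm{u})|\le\prod_{i=1}^d\left(\frac{|f^{(i)}_n(\mathrm{e}^{\mathrm{i}\bm{t}-\bm{h}/c_n})|}{f^{(i)}_n(\mathrm{e}^{-\bm{h}/c_n})}\right)^{l^{(i)}},
\]
and then argue that Lemma \ref{lem1} bounds the numerator by some $\theta<1$, concluding $|\psi_{\bm{l}}(\bm{u})|\le\theta^{|\bm{l}|}$. That inference is not valid: since $\Vert\bm{h}\Vert\ge h_1>0$, the denominator $f^{(i)}_n(\mathrm{e}^{-\bm{h}/c_n})$ is itself strictly less than $1$ (it tends to $\mathbb{E}_i\mathrm{e}^{-\bm{h}\cdot\bm{v}W}$), so the ratio inside the product is $\theta/f^{(i)}_n(\mathrm{e}^{-\bm{h}/c_n})$, which need not be $\le\theta$ and could even exceed $1$. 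What the argument actually requires is a uniform bound strictly below $1$ on the \emph{ratio} itself, i.e., on $|\mathbb{E}\mathrm{e}^{\mathrm{i}\bm{t}\cdot\bm{X}_i(\bm{h},n)}|$, the modulus of the characteristic function of the Cram\'er-transformed variable. This holds, and the proof is a re-run of the equicontinuity and global-limit-theorem argument in Lemma \ref{lem1} applied to $\bm{X}_i(\bm{h},n)$ rather than to $\bm{Z}_n$ directly (under $\tilde{\mathbb P}_i$ the scaled $\bm{X}_i(\bm{h},n)/c_n$ still converges to a positive-density limit, so its characteristic function at frequencies in a fixed annulus is uniformly bounded below $1$), but it is not a direct quotation of Lemma \ref{lem1}. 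As written your step would fail in the regime where $\theta$ is not smaller than $\inf_n f^{(i)}_n(\mathrm{e}^{-\bm{h}/c_n})$, and this needs to be fixed before the geometric-decay-beats-polynomial-volume conclusion goes through.
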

\begin{proof}
	By $d$-dimensional inverse Fourier transform theorem,
	\[
	\mbb{P}(\bm{S}_{\bn{l}}(\bm{h},n)=\bm{k})=\frac{1}{(2\pi )^d}\int_{[-\pi,\pi]^d}\mathrm{e}^{-\mathrm{i}\bn{t}\cdot\bn{k}}\mbb{E}\mathrm{e}^{\mathrm{i}\bn{t}\cdot\bn{S}_{\bn{l}}(\bn{h},n)}\mathrm{d}\bm{t}.\]
Change the variable $\bm{t}$ to $\vert \bm{l}\vert^{-1/2}\bm{B}\bm{t}^T$,	we get
\begin{eqnarray*}
\mbb{P}(\bm{S}_{\bn{l}}(\bm{h},n)=\bm{k})=\frac{|\bm{B}|}{(2\pi\vert \bm{l}\vert^{1/2})^d}\int_{\Omega} \exp\{-\mathrm{i}\vert \bm{l}\vert^{-1/2}(\bm{B}\bn{t})\cdot(\bn{k}-\mbb{E}\bm{S}_{\bn{l}}(\bm{h},n))\}\psi_{\bn{l}}(\bm{t})\mathrm{d}\bm{t},
\end{eqnarray*}
where $|\bm{B}|$ is determinant of $\bm{B}$ and $\Omega=\{\bm{t}\in\mbb{R}^d:\vert \bm{l}\vert^{-1/2}\bm{B}\bm{t}^T\in[-\pi,\pi]^d\}$. From Lemma \ref{auxlem7}, there is a nice connection between $\psi_{\bn{l}}(\bm{t})$ and $\mathrm{e}^{-\vt\bn{t}\vt/2}$. On the other hand, we have the following identity:
\begin{align*}
\int_{\mbb{R}^d}\exp\{-\mathrm{i}\bm{x}_{\bn{l},\bn{k}}\cdot\bm{t}-\Vert \bm{t}\Vert^2/2\}d\bm{t}
=&\int_{\mbb{R}^d}\exp\left\{-\frac{1}{2}[(\bm{t}+\mathrm{i}\bm{x}_{\bn{l},\bn{k}})^2+(\bm{x}_{\bn{l},\bn{k}})^2]\right\}\mathrm{d}\bm{t}\\
=&(2\pi)^{d/2}\mathrm{e}^{-(\bm{x}_{\bn{l},\bn{k}})^2/2}.
\end{align*}
Therefore, for any $0<\epsilon<A<\infty$, we have
\[
\sup_{\bn{k}>\textbf{0}}\left|\vert\bm{l}\vert^{d/2}|\bm{B}|^{-1}\mbb{P}(\bm{S}_{\bn{l}}(\bm{h},n)=\bm{k})-(2\pi)^{-d/2}\mathrm{e}^{-\bm{x}_{\bn{l},\bn{k}}(\bn{h},n)^2/2}\right|\le C\cdot(I_1+I_2+I_3+I_4),\]
where
\begin{align*}
&I_1:=\int_{\Omega_1}|\psi_{\bn{l}}(\bm{t})-\mathrm{e}^{-\bn{x}_{\bn{l},\bn{k}}(\bn{h},n)^2/2}|\mathrm{d}\bm{t}, ~~~~I_2:=\int_{\Omega_2}\mathrm{e}^{-\bn{x}_{\bn{l},\bn{k}}(\bn{h},n)^2/2}\mathrm{d}\bm{t},\\
&I_3:=\int_{\Omega_3}|\psi_{\bn{l}}(\bm{t})|\mathrm{d}\bm{t},~~~~~~~~~~~~~~~~~~~~~~~I_4:=\int_{\Omega_4}|\psi_{\bn{l}}(\bm{t})|\mathrm{d}\bm{t},	
	\end{align*}
and
\begin{align*}
&	\Omega_1=\{\bm{t}\in\mbb{R}^d:\vert \bm{l}\vert^{-1/2}\bm{B}\bm{t}^T\in[-\epsilon,\epsilon]^d\},~~~~~\Omega_2=\{\bm{t}\in\mbb{R}^d:\vert \bm{l}\vert^{-1/2}\bm{B}\bm{t}^T\in[-\epsilon,\infty)^d\}\\
&	\Omega_3=\{\bm{t}\in\mbb{R}^d:\vert \bm{l}\vert^{-1/2}\bm{B}\bm{t}^T\in[-\epsilon,A]^d\},~~~~~\Omega_4=\{\bm{t}\in\mbb{R}^d:\vert \bm{l}\vert^{-1/2}\bm{B}\bm{t}^T\in[A,\pi]^d\}.
\end{align*}

With the foundational results established in Lemmas \ref{lem2} and \ref{auxlem7}, along with the auxiliary relations (\ref{cn}) and (\ref{e101}), one can prove that $I_i,1\le i\le 4$, asymptotic decay to zero as $|\bm{l}|\to\infty$ by the same methodology presented in \cite[Lemma 15]{lower}. To avoid redundancy, we omit the technical details here.
\end{proof}
\subsection{Existence of Bounded Matching Parameters}
 In applying Lemma \ref{auxlem11}, our objective is to find suitable $\bm{h}_n$ such that $\lim_{n\to\infty}\bm{x}_{\bn{l}_n,\bn{k}_n}(\bm{h}_n,n) = 0$ for given $\bm{l}_n,\bm{k}_n\in\mbb{Z}_+^d$.  The following lemma shows this would happen if $\bm{k}_n\overset{\bn{v}}{\longrightarrow}\infty$ as $n\to\infty$. Recall that $$b_n=\min\{n:c_l \mu^{n-l}\geq \lambda_{u}|\bm{k}_n|\},~J_{n,i}=\{\bm{j}\in\mbb{Z}_{+}^d:\mbb{P}_i(\bm{Z}_n=\bm{j})>0\}$$  $\hat{J}_{n,i}$ is the lower boundary of $J_{n,i}$ and $\lambda_{u}$ is the constant defined in (\ref{delta}).  Recall the minimal offspring vector at generation $n$ is defined as: $$\hat{\bm{r}}_{n,i}=\arg \min\{\bm{s}\in J_{n,i}:|\bm{s}|\}.$$ 
\begin{lemma}\label{auxlem12}
	Assume $\bm{k}_n\in\mbb{Z}_+^d$ and $\bm{k}_n\overset{\textbf{v}}{\longrightarrow}\infty$ as $n\to\infty$. There exists a sequence of $\bm{h}_n$ with $0<\liminf_{n\to\infty}\vt\bm{h}_n\vt \le  \limsup_{n\to\infty}\vt\bm{h}_n\vt<\infty$, such that $\lim_{n\to\infty}|\mbb{E}\bm{S}_{\hat{\textbf{r}}_{n-b_n,i}}(\bm{h}_n,b_n)-\bm{k}_n|=0$.
\end{lemma}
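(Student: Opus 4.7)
The plan is to invert the map $\bm{h}\mapsto g_n(\bm{h}):=\mbb{E}\bm{S}_{\hat{\bm{r}}_{n-b_n,i}}(\bm{h},b_n)$ near an explicit leading-order candidate. From the formula
\[
\mbb{E}\bm{X}_j(\bm{h},n)=\frac{\mbb{E}_j[\bm{Z}_n\,e^{-\bm{h}\cdot\bm{Z}_n/c_n}]}{\mbb{E}_j[e^{-\bm{h}\cdot\bm{Z}_n/c_n}]},
\]
the a.s.\ convergence $c_n^{-1}\bm{Z}_n\to W\bm{v}$ and dominated convergence (the tilt is bounded by $1$ when $\bm{h}\ge\bm{0}$) yield, along any subsequence with $b_n\to\infty$, the uniform convergence on compact subsets of $(0,\infty)^d$
\[
\mbb{E}\bm{X}_j(\bm{h},b_n)/c_{b_n}\longrightarrow\bm{v}\,\phi_j(\bm{h}\cdot\bm{v}),\qquad \phi_j(s):=\frac{\mbb{E}_j[W e^{-sW}]}{\mbb{E}_j[e^{-sW}]},
\]
with each $\phi_j$ continuous and strictly decreasing from $\phi_j(0^+)=\mbb{E}_j W$ down to $\phi_j(\infty)=0$ (the essential infimum of $W$ equals zero because $\mu^n/c_n\to 0$).

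The definition (\ref{delta}) of $b_n$ combined with Proposition \ref{auxlem8} controls the scalar ratio
\[
\theta_n:=\frac{|\bm{k}_n|}{c_{b_n}\,|\hat{\bm{r}}_{n-b_n,i}|}.
\]
Indeed, $c_{b_n}\mu^{n-b_n}\in[\lambda_u|\bm{k}_n|,\,(\rho/\mu)\lambda_u|\bm{k}_n|)$ from the minimality in (\ref{delta}) together with $c_{b_n}\le\rho c_{b_n-1}$, while $|\hat{\bm{r}}_{n-b_n,i}|\in[\lambda_i,\lambda'_i]\mu^{n-b_n}$ from Proposition \ref{auxlem8}; these combine to give $\theta_n\in[\tfrac{\mu\lambda_i}{\rho\lambda'_i}\min_k u^{(k)},\,\min_k u^{(k)}]$. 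Setting $r_j:=\hat{r}_{n-b_n,i}^{(j)}/|\hat{\bm{r}}_{n-b_n,i}|$ and $\Phi_n(s):=\sum_j r_j\phi_j(s)$, a continuous, strictly decreasing function from $\Phi_n(0^+)\ge\min_k u^{(k)}$ to $0$, the intermediate value theorem furnishes a scalar $s_n$ in a bounded subset of $(0,\infty)$ with $\Phi_n(s_n)=\theta_n$; the lower boundedness of $s_n$ is the subtlety, relying on the uniform gap $\Phi_n(0^+)-\theta_n\ge\delta>0$ (non-degenerate case).

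Set $\bm{h}_n^*:=s_n\bm{1}$, so $\bm{h}_n^*\cdot\bm{v}=s_n$ thanks to $\bm{1}\cdot\bm{v}=1$. Then
\[
g_n(\bm{h}_n^*)=c_{b_n}\,|\hat{\bm{r}}_{n-b_n,i}|\,\Phi_n(s_n)\,\bm{v}\,(1+o(1))=|\bm{k}_n|\,\bm{v}\,(1+o(1)),
\]
which combined with $\bm{k}_n/|\bm{k}_n|\to\bm{v}$ gives $|g_n(\bm{h}_n^*)-\bm{k}_n|=O(|\bm{k}_n|\,\varepsilon_v(\bm{k}_n))$, an $o(|\bm{k}_n|)$ but not yet $o(1)$ error. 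To close the gap, apply the inverse function theorem at $\bm{h}_n^*$: a direct computation of the cumulant-generating-function Hessian shows
\[
Dg_n(\bm{h})=-c_{b_n}^{-1}\,\mathrm{Cov}\bigl(\bm{S}_{\hat{\bm{r}}_{n-b_n,i}}(\bm{h},b_n)\bigr),
\]
whose operator norm is of order $|\hat{\bm{r}}_{n-b_n,i}|\,c_{b_n}\asymp|\bm{k}_n|$, and the positive-definiteness of the average covariance $\bm{V}^{\bm{h},b_n}$ (as used in Lemma \ref{auxlem7} under aperiodicity and nonsingularity) yields a lower singular value of the same order. A single Newton correction of size $O(\varepsilon_v(\bm{k}_n))\to 0$ then produces $\bm{h}_n=\bm{h}_n^*+o(1)$ solving $g_n(\bm{h}_n)=\bm{k}_n$ exactly, so $|g_n(\bm{h}_n)-\bm{k}_n|=0$ and $\bm{h}_n$ inherits the bounded-away-from-zero range of $\bm{h}_n^*$.

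The main obstacle is twofold. First, the lower bound $\liminf_n\|\bm{h}_n\|>0$ can fail in the degenerate regime in which $r_j$ concentrates on the indices attaining $\min_k u^{(k)}$, forcing $\Phi_n(0^+)\to\min_k u^{(k)}$ and potentially $s_n\to 0$; one then perturbs $\bm{h}_n$ in directions orthogonal to $\bm{v}$ to absorb the sub-leading component of $\bm{k}_n-|\bm{k}_n|\bm{v}$ without driving $\|\bm{h}_n\|$ to zero, using that the transverse components of $Dg_n$ remain non-degenerate. Second, when $b_n$ stays bounded the scaling limit is unavailable, and one passes to a further subsequence along which $b_n$ is constant (necessarily $n-b_n\to\infty$) so the Jacobian analysis reduces to the finite-dimensional nonsingularity of $\bm{Z}_{b_n}$; a diagonal-subsequence argument recombines the two regimes to give the claim in full generality.
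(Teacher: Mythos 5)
Your core mechanism matches the paper's: pass to the scaling limit $\mathbb{E}\bm{X}_j(\bm{h},b_n)/c_{b_n}\to\bm{v}\,\phi_j(\bm{h}\cdot\bm{v})$ with $\phi_j(s)=\mathbb{E}_j[We^{-sW}]/\mathbb{E}_j[e^{-sW}]$, reduce the vector problem to the scalar equation $c_{b_n}\sum_k\hat{r}^{(k)}_{n-b_n,i}\phi_k(s)=|\bm{k}_n|$, solve by the intermediate value theorem, and pin $s$ into a compact set of $(0,\infty)$ using Proposition \ref{auxlem8} together with the definition of $b_n$ (your $\theta_n$-bounds are the paper's \eqref{ems}--\eqref{smss}). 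Where you go beyond the paper is in explicitly observing that this scalar matching only controls the $\bm{v}$-component of $\mathbb{E}\bm{S}-\bm{k}_n$, leaving a transverse residual of order $|\bm{k}_n|\varepsilon_v(\bm{k}_n)$, which is $o(|\bm{k}_n|)$ under $\bm{k}_n\overset{\bm{v}}{\to}\infty$ but not automatically $o(1)$. The paper passes directly from the relative bounds $(1\pm\epsilon)|\bm{k}_n|\bm{v}$ to the absolute bound $|\M_n(\bm{h}_n)-\bm{k}_n|\le 2\epsilon$ without tracking the $|\bm{k}_n|$ factor, so your caution here is warranted and flags a genuine soft spot. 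However, your Newton-step repair rests on the claim that the least singular value of $Dg_n(\bm{h})=-c_{b_n}^{-1}\mathrm{Cov}(\bm{S}_{\hat{\bm{r}}_{n-b_n,i}}(\bm{h},b_n))$ is of order $|\bm{k}_n|$, equivalently that $c_{b_n}^{-2}\bm{V}^{\bm{h},b_n}$ is uniformly positive definite. That cannot be true in the limit: since $c_n^{-1}\bm{Z}_n\to W\bm{v}$ a.s.\ and the exponential tilt $e^{-\bm{h}\cdot\bm{Z}_n/c_n}$ is bounded, dominated convergence gives $c_n^{-2}\mathrm{Cov}(\bm{X}_i(\bm{h},n))\to\mathrm{Var}_{\bm{h}}(W)\,\bm{v}\bm{v}^T$, a rank-one matrix. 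Thus the transverse singular values of $Dg_n$ are $o(|\bm{k}_n|)$ and the Newton correction need not be $O(\varepsilon_v(\bm{k}_n))$; in particular the ``transverse components of $Dg_n$ remain non-degenerate'' claim in your obstacles paragraph needs quantification at a rate you do not supply. Note also that the downstream use of this lemma, via $\bm{x}_{\bm{l},\bm{k}}=|\hat{\bm{r}}|^{-1/2}(\bm{k}_n-\mathbb{E}\bm{S})\bm{B}$ in Lemma \ref{auxlem11}, really only requires $\Vert\bm{k}_n-\mathbb{E}\bm{S}\Vert=o(|\hat{\bm{r}}_{n-b_n,i}|^{1/2}c_{b_n})$, so a restated and weaker lemma would suffice; your diagnosis points in that direction, but the proposed cure as written does not close the gap.
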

\begin{proof}
By definition of $\bm{S}_{\bn{l}}(\bm{h},n)$ and $\bm{X}_{i}(\bm{h},n)$, we derive the first-moment relationship:	$$\mbb{E}\bm{S}_{\hat{\bn{r}}_{n-b_n,i}}(\bm{h}_n,b_n)=\hat{\bm{r}}_{n-b_n,i}\bm{M}(\bm{h}_n,b_n),~1\le i \le d,~n\geq1,$$ where $\bm{M}(\bm{h}_n,b_n)$ denotes the $d\times d $ matrix with entries $M_{i,j}=\mbb{E}X^{(j)}_{i}(\bm{h}_n,b_n)$~(recall that $X^{(j)}$ denotes the $j$-th component of $\bm{X}$).\par
 Define the normalized operator:
 \[
\M_n(\bm{h})=\hat{\bm{r}}_{n-b_n,i}\bm{M}(\bm{h},b_n) =c_{b_n}\hat{\bm{r}}_{n-b_n,i}\frac{\bm{M}(\bm{h},b_n)}{c_{b_n}}.\]
The scaled matrix entries of $\bm{M}(\bm{h},b_n)/c_{b_n}$ satisfy
\begin{equation}
\frac{M_{i,j}}{c_{b_n}}=\frac{\mbb{E}X^{(j)}_{i}(\bm{h}_n,b_n)}{c_{b_n}}=\cfrac{\frac{\partial f_{b_n}^{(i)}}{\partial s_j}(\mathrm{e}^{-\bn{h}/c_{b_n}})\mathrm{e}^{-h^{(j)}/c_{b_n}}}{c_{b_n}f_{b_n}^{(i)}(\mathrm{e}^{-\bn{h}/c_{b_n}})},\end{equation}
where $\frac{\partial f_{b_n}^{(i)}}{\partial s_j}(\cdot)$ denotes the partial derivative of $f_{b_n}^{(i)}$ with respect to the $j$-th component. \par 
Next,  we look at asymptotic behaviors of $\M_n(\bm{h})$. By the convergence of $\bm{Z}_n/c_n$ (see (\ref{cn})), we obtain
\begin{equation}\label{emm2}
\lim_{n\to\infty}\frac{M_{i,j}}{c_{b_n}}= \cfrac{-\frac{\partial \mbb{E}_{i}\mathrm{e}^{-\bn{h}\cdot\bn{v}W}}{\partial h^{(j)}}}{\mbb{E}_{i}\mathrm{e}^{-\bn{h}\cdot\bn{v}W}}=\cfrac{v^{(j)}\mbb{E}_{i}W\mathrm{e}^{-\bn{h}\cdot\bn{v}W}}{\mbb{E}_{i}\mathrm{e}^{-\bn{h}\cdot\bn{v}W}}.\end{equation}
Thus, for any $\epsilon>0$, there exists $n_1>0$ such that for $n\geq n_1$,
\[
(1-\epsilon)c_{b_n}\left(\sum_{k=1}^d\hat{r}_{n-b_n,i}^{(k)}\cfrac{\mbb{E}_{k}W\mathrm{e}^{-\bn{h}\cdot\bn{v}W}}{\mbb{E}_{k}\mathrm{e}^{-\bn{h}\cdot\bn{v}W}}\right)\cdot\bm{v}\le\M_n(\bm{h})\le (1+\epsilon)c_{b_n}\left(\sum_{k=1}^d\hat{r}_{n-b_n,i}^{(k)}\cfrac{\mbb{E}_{k}W\mathrm{e}^{-\bn{h}\cdot\bn{v}W}}{\mbb{E}_{k}\mathrm{e}^{-\bn{h}\cdot\bn{v}W}}\right)\cdot\bm{v}\]
and
\[
(1-\epsilon)|\bm{k}_n|\cdot\bm{v}\le\bm{k}_n\le (1+\epsilon)|\bm{k}_n|\cdot\bm{v}.\]
We aim to show that there exists a sequence of $\bm{h}_n,n\ge n_1,$ with $0<\liminf_{n\to\infty}\vt\bm{h}_n\vt \le  \limsup_{n\to\infty}\vt\bm{h}_n\vt<\infty$ such that
\begin{equation}\label{key}
c_{b_n}\left(\sum_{k=1}^d\hat{r}_{n-b_n,i}^{(k)}\cfrac{\mbb{E}_{k}W\mathrm{e}^{-\bn{h}_n\cdot\bn{v}W}}{\mbb{E}_{k}\mathrm{e}^{-\bn{h}_n\cdot\bn{v}W}}\right)=|\bm{k}_n|.\end{equation}
If this holds, then for $n\geq n_1$,\[
-2\epsilon\le |\M_n(\bm{h}_n)-\bm{k}_n|\le 2\epsilon\]
and the lemma follows by letting $\epsilon \to 0$.\par 
For each $1\le k\le d$, we observe the asymptotic behavior of the ratio at boundaries:
\begin{equation}\label{eqlem11}
\frac{\mbb{E}_{k}W\mathrm{e}^{-\bn{h}\cdot\bn{v}W}}{\mbb{E}_{k}\mathrm{e}^{-\bn{h}\cdot\bn{v}W}}\bigg\vert_{\bn{h}=\textbf{0}}=\mbb{E}_kW~~~\text{and}~~~\frac{\mbb{E}_{k}W\mathrm{e}^{-\bn{h}\cdot\bn{v}W}}{\mbb{E}_{k}\mathrm{e}^{-\bn{h}\cdot\bn{v}W}}\bigg\vert_{\bn{h}=\infty}=0,\end{equation} 
where (see \cite[Theorem 1,2]{HFM}) \begin{itemize}
	\item $\mbb{E}_kW=\infty$ if $\sup_i\mbb{E}_i|\bm{Z}_1|\log |\bm{Z}_1|=\infty$;
	\item $\mbb{E}_kW=u^{(k)}$ if $\sup_i\mbb{E}_i|\bm{Z}_1|\log |\bm{Z}_1|<\infty$.
\end{itemize}
By the definition of $b_n$, we derive the following bound for $c_{b_n}\mu^{n-b_n}/|\bm{k}_n|$: 
\begin{equation}\label{ems}
\lambda_{u}\le \frac{c_{b_n}\mu^{n-b_n}}{|\bm{k}_n|}=\frac{c_{b_{n-1}}\mu^{n-b_{n}+1}}{|\bm{k}_n|}\frac{c_{b_n}}{\mu c_{b_{n-1}}}\le \frac{\rho}{\mu},
\end{equation}
where $\lambda_{u}=(\lambda_i\min_ku^{(k)})^{-1}$. Combining this with Proposition \ref{auxlem8}, we obtain 
\begin{equation}\label{ems4}
c_{b_n}\left(\sum_{k=1}^d\hat{r}_{n-b_n,i}^{(k)}u^{(k)}\right)>\min_ku^{(k)}|\bm{\hat{r}}_{n-b_n,i}|c_{b_n}\ge |\bm{k}_n|.\end{equation}
On the other hand, by (\ref{eqlem11}), for $\bm{h}\ge C\bm{1}$ with $C$ sufficiently large, we have
\[
c_{b_n}\left(\sum_{k=1}^d\hat{r}_{n-b_n,i}^{(k)}\frac{\mbb{E}_{k}W\mathrm{e}^{-\bn{h}\cdot\bn{v}W}}{\mbb{E}_{k}\mathrm{e}^{-\bn{h}\cdot\bn{v}W}}\right)\le |\bm{k}_n|.\]
Therefore, since $\frac{\mbb{E}_{k}W\mathrm{e}^{-sW}}{\mbb{E}_{k}\mathrm{e}^{-sW}}$ is continuous with respect to $s>0$, along with the surjectivity of $v(\bm{k}):=\bm{k}\cdot\bm{v},\bm{k}\ge\bm{0},$ over $\mbb{R}_+$,  we get the existence of $\bm{h}_n$ to equation (\ref{key}). \par In addition, by Proposition \ref{auxlem8} and \eqref{ems}, we have the bound for $c_{b_n}|\bm{\hat{r}}_{n-b_n,i}|/|\bm{k}_n|$:
\begin{equation}\label{smss}
\min_ku^{(k)}\le \frac{c_{b_n}|\bm{\hat{r}}_{n-b_n,i}|}{|\bm{k}_n|}=\frac{|\bm{\hat{r}}_{n-b_n,i}|}{\mu^{n-b_n}}\frac{c_{b_n}\mu^{n-b_n}}{|\bm{k}_n|}\le\frac{\lambda'_i\rho}{\mu},~n\ge1.\end{equation} This and \eqref{key} imply that  $0<\liminf_{n\to\infty}\vt\bm{h}_n\vt \le  \limsup_{n\to\infty}\vt\bm{h}_n\vt<\infty$. The proof is complete.
	\end{proof}
	
\subsection{Proof of Theorem \ref{thm2}}
\noindent\textbf{Proof of equation \ref{ekl}:}

Suppose $\bm{k}_n\in\mbb{Z}_+^d$, $|\bm{k}_n|\ge|\hat{\bm{r}}_{n,i}|$, $|\bm{k}_n|=\mathrm{o}(c_n)$ and $\bm{k}_n\overset{\bn{v}}{\longrightarrow}\infty$ as $n\to\infty$. By Markov property, we get the decomposition: 
\begin{equation}\label{e202}
	\mbb{P}_i(\bm{Z}_{n}=\bn{k}_n)=\sum_{\bn{l}\in J_{n-b_n,i}}\mbb{P}_i(\bm{Z}_{n-b_n}=\bm{l})\mbb{P}(\bm{Z}_{b_n}=\bm{k}|\bm{Z}_0=\bm{l}).\end{equation}
Using (\ref{emm1}) and Lemma \ref{lem3}, we have the estimate for the condition probability term:  
\[
c^d_{b_n}\mbb{P}(\bm{Z}_{b_n}=\bm{k}_n|\bm{Z}_0=\bn{l})\le A(\bm{h})|\bm{l}|^{-d/2}\mathrm{e}^{\bn{h}\cdot\bn{k}_n/c_{b_n}}\left(\bm{f}_n(\mathrm{e}^{-\bn{h}/c_n})\right)^{\bn{l}}.\]
Recall that $$\hat{\bm{r}}_{n-b_n,i}=\arg \min\{\bm{s}\in J_{n-b_n,i}:|\bm{s}|\}$$ and hence $|\bm{l}|\ge|\hat{\bm{r}}_{n-b_n,i}| $ for all $\bm{l}\in J_{n-b_n,i}$. Therefore, using (\ref{smss}), we further estimate as follows:
\begin{align*}
c^d_{b_n}\mbb{P}(\bm{Z}_{b_n}=\bm{k}_n|\bm{Z}_0=\bn{l})&\le A(\bm{h})|\bm{l}|^{-d/2}\mathrm{e}^{|\bn{h}|\cdot|\bn{k}_n|/c_{b_n}}\left(\bm{f}_n(\mathrm{e}^{-\bn{h}/c_n})\right)^{\bn{l}}  \\
&\le A(\bm{h})|\bm{l}|^{-d/2}\left(\exp\left\{\frac{|\bn{h}||\bn{k}_n|}{c_{b_n}|\hat{\bn{r}}_{n-b_n,i}|}\right\}\bm{f}_n(\mathrm{e}^{-\bn{h}/c_n})\right)^{\bn{l}}\\
&\le A(\bm{h})|\bm{l}|^{-d/2}\left(\exp\left\{\frac{|\bn{h}|}{\min_{k}u^{(k)}}\right\}\bm{f}_n(\mathrm{e}^{-\bn{h}/c_n})\right)^{\bn{l}}.
\end{align*}
 By the global limit theorem, $f^{(i)}_n(\mathrm{e}^{-\bn{h}/c_n})$ converges to $\mbb{E}_iW$ as $n\to\infty$ for $1\le i \le d$. Since $\mbb{E}_iW\ge\min_{k}u^{(k)}$, there exist a vector $\bm{h}_0$ with sufficiently small $|\bm{h}_0|$ and a constant $0<\delta<1$ such that
  \begin{equation}\label{ekl3}
  	\sup_i\exp\left\{\frac{|\bn{h}_0|}{\min_{k}u^{(k)}}\right\}f^{(i)}_n(\mathrm{e}^{-\bn{h}_0/c_n})\le \mathrm{e}^{-\delta}
  \end{equation} 
holds for all $n$ sufficiently large. Consequently, we arrive at
\begin{equation}\label{ekl1}
c^d_{b_n}\mbb{P}(\bm{Z}_{b_n}=\bm{k}_n|\bm{Z}_0=\bn{l})\le A(\bm{h}_0)|\bm{l}|^{-d/2} \mathrm{e}^{-\delta|\bn{l}|}.\end{equation} 
Substituting this bound into (\ref{e202}), we derive
\[
c^d_{b_n}\mbb{P}_i(\bm{Z}_n=\bm{k}_n)\le C\cdot\mu^{-(n-b_n)d/2}f_{n-b_n}^{(i)}(\mathrm{e}^{-\delta}\bm{1}).\]
Taking the limsup of the logarithmic scaling, 
\begin{align*}\label{esm2}
	&\limsup_{n\to\infty}\mu^{b_n-n}\log[c^d_{n}\mbb{P}_i(\bm{Z}_n=\bm{k}_n)]\\
	\le& \limsup_{n\to\infty}\mu^{b_n-n}\left(\log C+\log(c^d_n/c^d_{b_n})+\log\mu^{-(n-b_n)d/2}+\log f_{n-b_n}^{(i)}(\mathrm{e}^{-\delta}\bm{1})\right) \\
	=& \limsup_{n\to\infty}\mu^{b_n-n}\log f_{n-b_n}^{(i)}(\mathrm{e}^{-\delta}\bm{1}).\end{align*}
From the logarithmic generating function limit of $\bm{f}_n$ in (\ref{e101}), we conclude
\begin{equation}\label{up1}
\limsup_{n\to\infty}\mu^{b_n-n}\log[c^d_{n}\mbb{P}_i(\bm{Z}_n=\bm{k}_n)]\le -C_{i,2},\end{equation} 
for some constant $C_{i,2}>0$.\par  Next, we establish the lower bound. Utilizing the decomposition in \eqref{e202}, we derive
 \begin{align}\label{key1}
 	\mbb{P}_i(\bm{Z}_n=\bm{k}_n)>\mbb{P}_i(\bm{Z}_{n-b_n}=\bm{\hat{r}}_{n-b_n,i})\mbb{P}(\bm{Z}_{b_n}=\bm{k}_n|\bm{Z}_0=\bm{\hat{r}}_{n-b_n,i}).
 \end{align}
 Now, we estimate the above conditional probability. From (\ref{shn1}), for any $\bm{h}\ge \bm{0}$,
 \begin{equation}\label{ems1}
	\mbb{P}(\bm{Z}_{b_n}=\bm{k}_n|\bm{Z}_0=\bm{\hat{r}}_{n-b_n,i})>\left(\bm{f}_{b_n}(\mathrm{e}^{-\bn{h}/c_{b_n}})\right)^{\hat{\bn{r}}_{n-b_n,i}}\mbb{P}(\bm{S}_{\hat{\bn{r}}_{n-b_n,i}}(\bm{h},b_n)=\bm{k}_n).\end{equation}
 By Lemma \ref{auxlem12}, there exists a sequence of $\bm{h}_n$ with $0<\inf_n\vt\bm{h}_n\vt \le  \sup_n\vt\bm{h}_n\vt<\infty$, such that $$\lim_{n\to\infty}|\mbb{E}\bm{S}_{\hat{\bn{r}}_{n-b_n,i}}(\bm{h}_n,b_n)-\bm{k}_n|=0.$$ Hence, applying Lemma \ref{auxlem11}, we obtain
 \[
 \lim_{n\to\infty}\left|\vert\hat{\bm{r}}_{n-b_n,i}\vert^{d/2}|\bm{B}^{\bn{h}_n,b_n}|^{-1}\mbb{P}(\bm{S}_{\hat{\bn{r}}_{n-b_n,i}}(\bm{h}_n,n)=\bm{k}_n)-(2\pi)^{-d/2}\right|=0.
 	\]
\par Recall that from the proof of Lemma \ref{auxlem7}, $(\bm{B}^{\bn{h},n})^2=(\bm{V}^{\bn{h},n})^{-1}$ and $c^{-2}_n\bm{V}^{\bn{h},n}$ converges to some positive definite matrix. Thus, each eigenvalue of $\bm{B}^{\bn{h}_n,n}$ is of order $c^{-1}_n$. Since the determinant equals the product of eigenvalues, we conclude $$|\bm{B}^{\bn{h}_n,b_n}|^{-1}\ge C\cdot c^d_{b_n}.$$ Combining with Proposition \ref{auxlem8}, we obtain
 	\[
 \liminf_{n \to \infty}\mu^{(n-b_n)d/2}c^d_{b_n}\mbb{P}(\bm{S}_{\hat{\bn{r}}_{n-b_n,i}}(\bm{h}_n,n)=\bm{k}_n)\ge C>0.\]
As $\inf_n\vt\bm{h}_n\vt>0$, the global limit theorem and Lemma \ref{auxlem8} imply that there exists $\theta>0$ such that 
\begin{equation}\label{ekl2}
 \left(\bm{f}_{b_n}(\mathrm{e}^{-\bn{h}_n/c_{b_n}})\right)^{\hat{\bn{r}}_{n-b_n,i}}\ge \theta^{\mu^{n-b_n}},~n\ge 1.
\end{equation}
 Substituting these bounds into (\ref{ems1}), we derive
 	\begin{equation}\label{key3}
 \liminf_{n \to \infty}\mu^{b_n-n}\log[c_n^d\mbb{P}(\bm{Z}_{b_n}=\bm{k}_n|\bm{Z}_0=\bm{s}_m)]\ge -C.\end{equation}
Next, we look at $\mbb{P}_i(\bm{Z}_{n-b_n}=\bm{\hat{r}}_{n-b_n,i})$ in (\ref{key1}). By decomposing the probability on $n-1$ generation, we have the following induction:
\begin{align*}
	\mbb{P}_i(\bm{Z}_{n}=\hat{\bm{r}}_{n,i})&=\sum_{\bn{s}\in J_{n-1,i}}\mbb{P}_i(\bm{Z}_{n-1}=\bm{s})\mbb{P}(\bm{Z}_1=\hat{\bm{r}}_{n,i}|\bm{Z}_0=\bm{s})\\
	&\ge \mbb{P}_i(\bm{Z}_{n-1}=\hat{\bm{r}}_{n-1,i})(\delta_0)^{|\hat{\bn{r}}_{n-1,i}|},
\end{align*}
where $\delta_0=\min_{1\le i\le d}\min_{\bn{s}\in\hat{J}_{1,i}}\mbb{P}_i(\bm{Z}_1=\bm{s})$. Combining this with  Proposition \ref{auxlem8} and by induction, we obtain 
	\begin{equation}\label{key4}
 \mu^{n-b_n}\log\mbb{P}(\bm{Z}_{n-b_n}=\hat{\bm{r}}_{n-b_n,i})\ge -C,~n\ge1.  	
\end{equation}
 Recalling (\ref{key3}), we get the lower bound of \eqref{key1}: 
 \[
 \liminf_{n \to \infty}\mu^{b_n-n}\log[c_n^d\mbb{P}_i(\bm{Z}_{n}=\bm{k}_n)]\ge -C_{i,1},\]
for some constant $C_{i,1}$. Together with the upper bound \eqref{up1}, this completes the proof of \eqref{ekl}. 

\noindent\textbf{Proof of equation \ref{ek2}:}

Assume $k_n\in\mbb{Z}_+$, $k_n\ge|\hat{\bm{r}}_{n,i}|$, $k_n=\mathrm{o}(c_n)$ and $k_n\to\infty$ as $n\to\infty$. Recall that $$b'_n=b(k_n\bm{1})=\min\{l\ge1 :c_l \mu^{n-l}\geq \lambda_{u}dk_n\},$$  where $d$ is the dimension parameter. Again, by Markov property, we have the decomposition: 
\begin{equation}\label{e202n}
	\mbb{P}_i(|\bm{Z}_{n}|\le k_n)=\sum_{\bn{l}\in J_{n-b'_n,i}}\mbb{P}_i(\bm{Z}_{n-b'_n}=\bm{l})\mbb{P}(|\bm{Z}_{b'_n}|\le k_n\big|\bm{Z}_0=\bm{l}).\end{equation}
Using the well-known inequality $\mbb{P}(X\le x)\le e^{hx}\mbb{E}\mbb{E}e^{-hx}$ for nonnegative random variable $X$ and $h>0$, we have
\begin{align*}
	\mathbb{P}_i(|\bm{Z}_{b'_n}| \le k_n\big|\bm{Z}_0=\bm{l})&= \mathbb{P}_i(|\bm{Z}_{b'_n}|/c_{b'_n} \le k_n/c_{b'_n}\big|\bm{Z}_0=\bm{l})
	\le \mathrm{e}^{hk_n/c_{b'_n}}(\mbb{E}\mathrm{e}^{-h|\bn{Z}_{b'_n}|/c_{b'_n}})^{|\bn{l}|}.
	\end{align*} 
Since $|\bm{l}|\ge |\hat{\bm{r}}_{n-b'_n,i}|$ for $\bm{l}\in J_{n-b'_n,i}$, applying a similar approach with the bound (\ref{smss}) yields
\begin{align*}
\mathbb{P}_i(|\bm{Z}_{b'_n}| \le k_n\big|\bm{Z}_0=\bm{l})&\le \left(\mathrm{e}^{hk_n/(|\hat{\bn{r}}_{n-b'_n,i}|c_{b'_n})}(\mbb{E}\mathrm{e}^{-h|\bn{Z}_{b'_n}|/c_{b'_n}})\right)^{|\bn{l}|}	\\
&\le\left(\mathrm{e}^{h(\min_ku^{(k)})^{-1}}(\mbb{E}\mathrm{e}^{-h|\bn{Z}_{b'_n}|/c_{b'_n}})\right)^{|\bn{l}|}.
\end{align*}
Therefore, by an argument analogous to (\ref{ekl3}), we have  
\[
\mathbb{P}_i(|\bm{Z}_{b'_n}| \le k_n\big|\bm{Z}_0=\bm{l})\le \mathrm{e}^{-\delta|\bn{l}|},~\bm{l}\in J_{n-b'_n,i}.\]
Substituting into (\ref{e202n}):
\begin{equation}
\mbb{P}_i(|\bm{Z}_{n}|\le k_n)\le f^{(i)}_{n-b'_n}(\mathrm{e}^{-\delta}\bm{1}).\end{equation}
Thus, by (\ref{e101}), we conclude 
\begin{equation}\label{up}
\mu^{b'_n-n}\log\mbb{P}_i(|\bm{Z}_{n}|\le k_n)\le -C_{i,4},\end{equation}
for some constant $C_{i,4}$.\par 
 Next, we look at the lower bound of $\mbb{P}_i(|\bm{Z}_{n}|\le k_n)$. By the decomposition in (\ref{e202n}), we get
\begin{align*}
\mbb{P}_i(|\bm{Z}_{n}|\le k_n)&\ge \mbb{P}_i(\bm{Z}_{n-b'_n}=\hat{\bm{r}}_{n-b'_n,i})\mbb{P}(|\bm{Z}_{b'_n}|\le k_n\big|\bm{Z}_0=\hat{\bm{r}}_{n-b'_n,i}).
\end{align*}
To apply Lemma \ref{auxlem12}, we construct a sequence $\bm{k}_n\in\mbb{Z}_+^d$ with components defined as: $$
k_n^{(j)}=\lfloor k_nv^{(j)}\rfloor,~1\le j\le d,$$
where $\lfloor\cdot\rfloor$ means the integer part and $v^{(j)}$ is the $j$-th component of $\bm{v}$. One can verify that  $\bm{k}_n\overset{\bn{v}}{\longrightarrow}\infty$ as $n\to\infty$. Therefore, from Lemma \ref{auxlem12}, we obtain a corresponding sequence $\bm{h}_n$ such that $\lim_{n\to\infty}|\mbb{E}\bm{S}_{\hat{\bn{r}}_{n-b'_n,i}}(\bm{h}_n,b'_n)-\bm{k}_n|=0$.  \par 
Using (\ref{shn1}) with $\bm{h}=\bm{h}_n$, we get
\[
\mbb{P}_i(|\bm{Z}_{n}|\le k_n)\ge \mbb{P}_i(\bm{Z}_{n-b'_n}=\hat{\bm{r}}_{n-b'_n,i})\left(\bm{f}_{b'_n}(\exp\{-\bm{h}_n/c_{b'_n}\})\right)^{\hat{\bm{r}}_{n-b'_n,i}}\mbb{P}(|\bm{S}_{\hat{\bn{r}}_{n-b'_n,i}}(\bm{h}_n,n)|\le k_n).\]
 By central limit theorem, we have the estimate for the last term of above equation:
\[
\lim_{n\to\infty}\mbb{P}(|\bm{S}_{\hat{\bn{r}}_{n-b'_n,i}}(\bm{h}_n,n)|\le k_n)=1/2.\]
Using (\ref{ekl2}) and (\ref{key4}) to estimate the rest terms, we derive
\[
\mu^{b'_n-n}\log\mbb{P}_i(|\bm{Z}_{n}|\le k_n)\ge -C_{i,3},\]
for some constant $C_{i,3}>0$. Combining this with the upper bound (\ref{up}), the proof is complete.

\noindent\textbf{Proof of Theorem \ref{thm2.1}:}

Based on the construction in Section \ref{sec4}, the proof of this theorem follows the same method as that of Theorem \ref{thm2}. It is worth noting that when generalizing the conclusions of Lemma \ref{auxlem7} and Lemma \ref{auxlem11}, we do not need to adopt the approach therein. Instead, we can directly apply the result from \cite[Lemmas 14 \& 15]{lower}, since in this case the random variable $S_{\bn{l},m}(h,n)$ involved is one-dimensional. We omit the detailed proof here.
\section*{Acknowledgement}
\par

\end{document}